\newtheorem{theorem}{Theorem}[section]
\newtheorem{lemma}[theorem]{Lemma}
\newtheorem{condition}[theorem]{Condition}
\theoremstyle{definition}
\newtheorem{definition}[theorem]{Definition}
\newtheorem{proposition}[theorem]{Proposition}
\theoremstyle{remark}
\newtheorem{remark}[theorem]{Remark}
\numberwithin{equation}{section}
\newcommand{\abs}[1]{\lvert {#1} \rvert}
\newcommand{\norm}[1]{\lVert {#1} \rVert}
\newcommand{\pdt}[2]{\langle {#1},{#2} \rangle}
\newcommand{\mexp}[1]{\exp\Big\{{#1}\Big\}}
\newcommand{\bbra}[1]{\bigg({#1}\bigg)}
\newcommand{\set}[1]{\Big\{{#1}\Big\}}
\begin{document}
\setlength\abovedisplayskip{2pt}% 上方间距
\setlength\belowdisplayskip{1pt}% 下方间距
\setlength\abovedisplayshortskip{2pt}% 短公式上方间距
\setlength\belowdisplayshortskip{1pt}% 短公式下方间距

\title[Variational posterior convergence for inverse problems]
 {Consistency of variational inference for Besov priors in non-linear inverse problems}

\author[S.K.Zu]{Shaokang Zu}
\address{School of Mathematics and Statistics,
Xi'an Jiaotong University,
Xi'an,
710049, China}
\email{incredit1@stu.xjtu.edu.cn}

\author[J. Jia]{Junxiong Jia}
\thanks{Junxiong Jia is the corresponding author.}
\address{School of Mathematics and Statistics,
Xi'an Jiaotong University,
 Xi'an
710049, China}
\email{jjx323@xjtu.edu.cn}

\author[Z. Wang]{Zhiguo Wang}
\address{School of Mathematics and Statistics,
Xi'an Jiaotong University,
 Xi'an
710049, China}
\email{emailwzg@mail.xjtu.edu.cn}

\subjclass[2010]{65N21, 62G20}

\date{}

\keywords{variational inference,
Bayesian nonlinear inverse problems,
elliptic partial differential equations,
subdiffusion equation, 
non-Gaussian priors}

\begin{abstract}
This study investigates the variational posterior convergence rates of inverse problems for partial differential equations (PDEs) with parameters in Besov spaces \( B_{pp}^\alpha \) (\( p \geq 1 \)) which are modeled naturally in a Bayesian manner using Besov priors constructed via random wavelet expansions with \( p \)-exponentially distributed coefficients. Departing from exact Bayesian inference, variational inference transforms the inference problem into an optimization problem by introducing variational sets. Building on a refined ``prior mass and testing'' framework, we derive general conditions on PDE operators and guarantee that variational posteriors achieve convergence rates matching those of the true posterior under widely adopted variational families (Besov-type measures or mean-field families). Moreover, our results achieve minimax-optimal rates over $B^{\alpha}_{pp}$ classes, significantly outperforming the suboptimal rates of Gaussian priors (by a polynomial factor). As specific examples, two typical nonlinear inverse problems, the Darcy flow problems and the inverse potential problem for a subdiffusion equation, are investigated to validate our theory. Besides, we show that our convergence rates of ``prediction'' loss for these ``PDE-constrained regression problems'' are minimax optimal.
\end{abstract}

\maketitle

%%%%%%%%%%%%%%%%%%%%%%%%%%%%%%%%%%%%%%%%%%%%%%%%%%%%%%%%%%%%%%%%%%%%%%%%%%%%%%%%%%%%%%%%%%%%%%%%%%%%%%%%%%%%%%%%%%%%%%%%%%%%%%%%%%%%%%%%%%%%%

%%%% ------------------------------------------------------------------------------------------------------------------------------

\section{Introduction}
The growing applications in many domains, such as seismic exploration and radar imaging, have driven substantial advances in inverse problems of partial differential equations (PDEs) over recent decades \cite{IntroPDE_haber2003learning}. Beyond deterministic solutions, the Bayesian approach has emerged as a powerful framework for uncertainty quantification through statistical inference \cite{IntroPDE_stuart2010inverse}. This approach reformulates inverse problems as statistical inference tasks, enabling rigorous characterization of parameter uncertainties in inverse problems of PDEs \cite{IntroPDE_Dashti2017}.
% Besides, in order to extend Bayesian approach for inverse problems to general Besov spaces, researchers introduce the Besov prior based on wavelets \cite{dashti2012besov,Jia_2016,lassas2009discretization}.
% While recent breakthroughs by Nickl et al. \cite{IntroNonLinear_nickl2023bayesian} with Gaussian process priors and Agapiou et al. \cite{agapiou2024laplace} with Laplace priors establish posterior contraction rates, their reliance on restrictive prior fundamentally limits adaptation to general Besov spaces $B_{pp}^{\alpha}\ (p\geq 1)$.
\par
To extract information from the posterior probability distribution, sampling methods such as Markov chain Monte Carlo (MCMC) methods are frequently utilized \cite{su_statistical_2023}. While MCMC is highly efficient and theoretically sound as a sampling method, its computational cost can become excessive for PDE-constrained likelihood evaluations \cite{Fichtner2011Book}. This bottleneck motivates variational Bayesian inference as a popular alternative. By minimizing the Kullback-Leibler (KL) divergence between a tractable family $\mathcal{Q}$ and the true posterior $\Pi(\cdot|D_N)$,  variational inference seeks to identify the variational posterior $\hat{Q}$, the closest approximation to $\Pi(\cdot\vert D_N)$. 
The structure of variational sets $\mathcal{Q}$ often allows variational inference methods to achieve comparable accuracy to MCMC with orders-of-magnitude speed improvements \cite{IntroVI_yang2020alpha}. Their optimization-based architecture particularly excels in large-scale inverse problems involving computationally intensive likelihood functions (see e.g. \cite{IntroVI_blei2017variational,IntroVI_meng2023sparse}). This approach is increasingly favored in Bayesian inverse problems, as illustrated by recent studies \cite{IntroVI_povala2022variational,IntroVI_jia2021variational} and their references. 
\par

While recent theoretical breakthroughs for Bayesian inverse problems by Nickl et al. \cite{IntroNonLinear_nickl2023bayesian} with Gaussian process priors and Agapiou et al. \cite{agapiou2024laplace} with Laplace priors establish posterior contraction rates, see also \cite{IntroNonLinear_nickl2020bernstein,IntroVI_meng2023sparse,zu2024consistencyvariationalbayesianinference},
% Despite recent growing theoretical foundations for Bayesian inverse problems \cite{IntroNonLinear_nickl2020bernstein,IntroNonLinear_nickl2023bayesian,agapiou2024laplace,IntroVI_meng2023sparse,zu2024consistencyvariationalbayesianinference},
current convergence analyses remain constrained to Sobolev spaces or $B_{11}^{\alpha}$, mismatched with general Besov spaces $B_{pp}^{\alpha}\ (p\geq 1)$.
Besov spaces provide a way to represent the unknown function in inverse problems with sharp edges, discontinuities, or varying degrees of smoothness, which are used in various inverse problems, including image reconstruction, geophysics, and density estimation. In Bayesian settings, many researchers have investigated inverse problems with Besov priors based on wavelets \cite{dashti2012besov,Jia_2016,lassas2009discretization}.
However, to our knowledge, no existing work establishes variational posterior $\hat{Q}$ convergence rates for nonlinear inverse problems in general Besov spaces, leaving a critical gap between methodological development and theoretical understanding.
\par
In this paper, we approach the variational posterior $\hat{Q}$ as the solution to the optimization problem \[ \mathop{\min}_{Q\in \mathcal{Q}} D(Q\Vert\Pi(\cdot|D_{N})),\] where $D(\cdot \Vert \cdot)$ represents the KL divergence, and $\Pi(\cdot|D_{N})$ denotes the posterior distribution derived from the data $D_N$ within a natural statistical observation model of the forward map $\mathcal{G}$ (refer to Section \ref{sec:GeneralSetting}). The primary goal of this study is to determine the convergence rate of the variational posterior $\hat{Q}$ towards the truth $\theta_0\in B_{pp}^{\alpha}$. We propose general conditions on the forward map $\mathcal{G}$, the prior and the variational class $\mathcal{Q}$ to describe this contraction. Assuming that the forward map $\mathcal{G}$ satisfies our regularity and conditional stability conditions, we demonstrate that, for Besov-type priors defined in Section \ref{sec:GeneralSetting} and the true parameter $\theta_0\in B_{pp}^{\alpha}$, the variational posterior $\hat{Q}$ converges to the true parameter at the rate specified by \[\varepsilon_N^2 + \frac{1}{N}\mathop{\inf}_{Q \in \mathcal{Q}}P_{0}^{(N)}D(Q\Vert \Pi_N(\cdot|D_N)),\]
\textcolor{black}{which originates from the general contraction theory of variational posterior \cite{zhang2020convergence}.}
The first term, $\varepsilon_N^2$, represents the convergence rate of the posterior $\Pi_N(\cdot|D_N)$ and is ordered as $N^{-a}$ for some $a > 0$. The second term represents the variational approximation error arising from the data-generating process $P_{0}^{(N)}$, which is induced by the true parameter. When the variational set $\mathcal{Q}$ comprises Besov-type measures or a mean-field variational class, we demonstrate that the variational approximation error is dominated by $\varepsilon_N^2$ (up to a logarithmic factor). Therefore, this implies that the convergence rates of the variational posterior distributions for nonlinear inverse problems can attain the same convergence rates as the posterior distributions (up to a logarithmic factor). Besides, in a setting with direct observations ($\mathcal{G}\equiv $Id), our convergence rate achieves the minimax rate  (up to a logarithmic factor) over Besov function classes $B_{pp}^{\alpha}$ for $p\in [1,2)$, while Gaussian process priors can only achieve polynomially slower convergence rates (see Section 4 of \cite{agapiou2024laplace} for details). In Section 4, we derive the convergence rates of the variational posterior for the Darcy flow problem and the inverse potential problem for a subdiffusion equation. We also show that the convergence rates of ``prediction'' loss for these ``PDE-constrained regression problems''  are minimax optimal.
\subsection*{Related work}
The theory behind Bayesian methods for linear inverse problems is now well-founded. Initial research into asymptotic behavior focused on conjugate priors \cite{IntroLinear_knapik2011bayesian}, and later extended to non-conjugate priors \cite{IntroLinear_ray2013bayesian}. See \cite{IntroLinear_knapik2013bayesian,IntroLinear_agapiou2014bayesian,IntroLinear_agapiou2013posterior,IntroLinear_jia2021posterior} for more references. In recent years, the theory of Bayesian nonlinear inverse problems has seen significant advancements. 
Nickl et al. \cite{IntroNonLinear_nickl2020convergence} present the theory for the convergence rate of maximum a posterior estimates with Gaussian process priors, providing examples involving the Darcy flow problem and the Schrödinger equation. For
X-ray transforms, Monard et al. \cite{IntroNonLinear_monard2019efficient,IntroNonLinear_monard2021consistent} prove Bernstein–von Mises theorems for a large family of one-dimensional linear functionals of the target parameter, and show the convergence rate of the statistical error in the recovery of a matrix field with Gaussian process priors. Subsequently, Giordano and Nickl \cite{IntroNonLinear_giordano2020consistency} demonstrated the convergence rate of the posterior with Gaussian process priors for an elliptic inverse problem. In 2020, Abraham and Nickl \cite{IntroNonLinear_abraham2020statistical} showed that a statistical algorithm constructed in the paper recovers the target parameter in supremum-norm loss at a statistical optimal convergence rate of the logarithmic order. For the Schr\"{o}dinger equation, Nickl \cite{IntroNonLinear_nickl2020bernstein} established the Bernstein–von Mises theorem for the posterior distribution using a prior based on a specific basis. For additional references, see \cite{IntroNonLinear_nickl2023some,IntroNonLinear_bohr2022bernstein,IntroNonLinear_monard2021statistical} and the monograph \cite{IntroNonLinear_nickl2023bayesian}.
\par
Variational Bayes inference has seen extensive use across various fields. Recently, theoretical results start to surface. In \cite{IntroVI_wang2019frequentist}, Wang and Blei established Bernstein–von Mises type of results for parametric models. For other related references on theories on parametric variational Bayes inference, refer to \cite{IntroVI_blei2017variational}. Regarding nonparametric and high-dimensional
models that are more relevant to non-linear inverse problems, recent researches \cite{IntroVI_yang2020alpha,IntroVI_alquier2020concentration} investigated variational approximation to tempered posteriors.
In the context of variational Bayesian inference for usual posterior distributions, studies \cite{zhang2020convergence,IntroVI_pati2018statistical} achieved results comparable to those obtained for tempered posteriors. When it comes to inverse problems, some algorithmic developments can be found in \cite{IntroVI_jia2021variational,IntroVI_jin2012variational,IntroVI_guha2015variational}. For theoretical results of linear inverse problems, Randrianarisoa and Szab\'o \cite{IntroLinear_randrianarisoa2023variational} employed the inducing variable method to derive the contraction rates of variational posterior with Gaussian process priors around the true function. Concerning non-linear inverse problems, study \cite{zu2024consistencyvariationalbayesianinference} extends these theoretical results of variational posterior to non-linear inverse problem settings with Gaussian priors.

{\color{black} Our present work extends the prior framework in \cite{zu2024consistencyvariationalbayesianinference} from Gaussian priors to the broader class of Besov priors generated by $p$-exponential wavelet coefficients. From our previous work, we adopt the same statistical model with Gaussian noise, the general conditions on the forward map $\mathcal{G}$ (local Lipschitz continuity and conditional stability, now formulated in Besov norms), and the overall proof strategy, which combines variational contraction theory from \cite{zhang2020convergence} with posterior concentration techniques for nonlinear inverse problems from \cite{IntroNonLinear_nickl2023bayesian}.

The new contributions relative to \cite{zu2024consistencyvariationalbayesianinference} are as follows. We establish new concentration results for Besov priors. For variational inference, we introduce a novel $p$-exponential mean‑field family $\mathcal{Q}_E$ tailored to the Besov prior, and prove that its approximation error is controlled by the contraction rate of the true posterior. We also apply the theory to the Darcy flow and subdiffusion inverse problems, obtaining minimax optimal contraction rates in Besov spaces $B_{pp}^\alpha$ for the ``PDE‑constrained regression problem'', whereas the rates are polynomially slower under Gaussian priors.}

% \\
% \emph{Organization.}\quad
\subsection*{Organization}
The structure of this paper is as follows. In Section \ref{sec:GeneralSetting}, we begin by introducing the notations and statistical settings for the inverse regression model. We also define Besov-type priors on $L^2$. In Section \ref{sec:VariationalConsistency}, we establish a theory that guarantees convergence rates of variational Bayesian inference with Besov-type priors for nonlinear inverse problems. In Section \ref{sec:ApplicationSection}, we demonstrate the application of our theory to the Darcy flow problem and the inverse potential problem for a subdiffusion equation, showing that the convergence rates of ``prediction'' loss for these ``PDE-constrained regression problems'' are minimax optimal (up to a logarithmic factor). Proofs for results of inverse problems and properties of Besov priors are given in Sections \ref{sec:proofIP} and \ref{sec:proofBesov} respectively.

\subsection*{Basic notations}
For $\mathcal{X} \subset \mathbb{R}^d$ an open set, $L^2(\mathcal{X})$ denotes the standard Hilbert space of square integrable functions on $\mathcal{X}$ with respect to Lebesgue measure, with inner product 
$\pdt{\cdot}{\cdot}_{L^2}$ and $\norm{\ \cdot\ }_{L^2}$ norm where we omit $\mathcal{X}$ when no confusion may arise. $L^2_{\lambda}(\mathcal{X})$ denotes the
space of $\lambda$-square integrable function on $\mathcal{X}$, where $\lambda$ is a measure on $\mathcal{X}$.
\par
For a multi-index $i = (i_1, ..., i_d)$, $i_j = \mathbb{N}\cup\{0\}$. Let $D^i$ denote the $i$-th (weak) partial differential operator of order $\abs{i} = \sum_j i_j$. Then the Sobolev spaces are defined as
\begin{align*}
H^{\alpha}(\mathcal{X}) = \set{{f \in L^2(\mathcal{X}) : D^i f \in L^2(\mathcal{X}) \ \forall |i| \leq \alpha}},\quad \alpha \in \mathbb{N}
\end{align*}
normed by $\norm{f}_{H^{\alpha}(\mathcal{X})} = \sum_{\abs{i}\leq \alpha} \norm{D^i f}_{L^2(\mathcal{X})}.$
For a non-integer real number $\alpha >0$, one defines $H^{\alpha}$ by interpolation.
For $\alpha \geq 0$ and $p,q \in [1,\infty]$, we denote by $B^{\alpha}_{pq}(\mathcal{X})$ the usual $\alpha$-regular space of Besov functions on $\mathcal{X}$ (see \cite{triebel1996function} for detailed definitions). 
\par
The space of bounded and continuous functions on $\mathcal{X}$ is denoted by $C^0(\mathcal{X})$,
equipped with the supremum norm $\norm{\ \cdot\ }_{\infty}$. For $\alpha \in \mathbb{N}\cup\{0\}$, the space of $\alpha$-times differentiable functions on $\mathcal{X}$ can be similarly defined as
\begin{align*}
C^{\alpha}(\mathcal{X}) = \set{{f \in C^0(\mathcal{X}) : D^i f \in C^0(\mathcal{X}) \ \forall |i| \leq \alpha}},\quad \alpha \in \mathbb{N}
\end{align*}
normed by
$\norm{f}_{C^{\alpha}(\mathcal{X})} = \sum_{\abs{i}\leq \alpha} \norm{D^i f}_{\infty}.$ 
The symbol $C^{\infty}(\mathcal{X})$ denotes the set of all infinitely differentiable functions on $\mathcal{X}$.
For a non-integer real number $\alpha >0$, we say $f \in C^{\alpha}(\mathcal{X})$ if for all multi-indices $i$ with $\abs{i}\leq \lfloor \alpha \rfloor$, $D^if$ exists and is $\alpha - \lfloor \alpha \rfloor$-H$\ddot{\mathrm{o}}$lder continuous. The norm on the space $C^{\alpha}(\mathcal{X})$ for such $\alpha$ is given by
\begin{align*}
\norm{f}_{C^{\alpha}(\mathcal{X})} = \norm{f}_{C^{\lfloor \alpha \rfloor}(\mathcal{X})} + \sum_{\abs{i} = \lfloor \alpha \rfloor} \sup_{x,y\in\mathcal{X}, x\neq y}\frac{\abs{D^i f(x)-D^i f(y)}}{\abs{x-y}^{\alpha - \lfloor \alpha \rfloor}}. 
\end{align*}
\par
For any space $S(\mathcal{X})$ with norm $\norm{\ \cdot \ }_{S(\mathcal{X})}$, we will sometimes omit $\mathcal{X}$ in the
notation and denote the norm by $\norm{\ \cdot \ }_{S}$ when no confusion may arise. The notation $S_0(\mathcal{X})$ denotes the subspace $(S_0(\mathcal{X}),\norm{\ \cdot \ }_{S(\mathcal{X})})$, consisting of elements of $S(\mathcal{X})$ that vanish at $\partial\mathcal{X}$. The notation $S_c(\mathcal{X})$ denotes the subspace $(S_c(\mathcal{X}),\norm{\ \cdot \ }_{S(\mathcal{X})})$, consisting of elements of $S(\mathcal{X})$ that are compactly supported in $\mathcal{X}$. Similarly, $S_K(\mathcal{X})$ denotes the subspace $(S_K(\mathcal{X}),\norm{\ \cdot \ }_{S(\mathcal{X})})$, consisting of elements of $S(\mathcal{X})$ that are supported in a subset $K\subset \mathcal{X}$. We use $S(\mathcal{X})^*$ to denote the dual space of $S(\mathcal{X})$. $B_{S(\mathcal{X})}(r)$ denotes $\{x \in S(\mathcal{X}) : \norm{x}_{S(\mathcal{X})} < r\}$ for $r\geq0$.
\par
All preceding spaces and norms can be defined for vector fields $f : \mathcal{X} \rightarrow \mathbb{R}^{d'}$ with
standard modification of the norms, by requiring each of the coordinate functions $f_i(\cdot), i = 1,\dots,d'$, to belong to the corresponding space of real-valued maps. We denote preceding spaces $S(\mathcal{X})$ defined for vector fields by $S(\mathcal{X},\mathbb{R}^{d'})$.
\par
Throughout the paper, $C, c$ and their variants denote generic constants that are either universal or ``fixed'' depending on the context.
For $a, b \in\mathbb{R}$, let $a\vee b = \max(a,b)$ and $a\wedge b = \min(a,b)$. The relation $a\lesssim b$ denotes an inequality $a\leq Cb$, and the corresponding convention is used for $\gtrsim$. The relation $a \simeq b$ holds if both $a\lesssim b$ and $a\gtrsim b$ hold. We define $\mathbb{R}^+=\{x\in\mathbb{R}: x\geq 0\}$. For $x\in \mathbb{R}^+$, $\lceil x \rceil$ is the smallest integer no smaller than $x$ and $\lfloor x \rfloor$ is the largest integer no larger than $x$. Given a set $S$, $\abs{S}$ denotes its cardinality, and $\textbf{1}_{S}$ is  the associated indicator function. The notations $\mathbb{P}$ and $\mathbb{E}$ are used to denote generic probability and expectation respectively, whose distribution is determined from the context. Additionally, the notation $\mathbb{P}f$ also means expectation of $f$ under $\mathbb{P}$, that is $\mathbb{P}f = \int f d\mathbb{P}$.
\par
For any $\alpha > d/p$, $0< \beta < \alpha - d/p$ and $\mathcal{X}$ a bounded domain with smooth boundary, the Sobolev imbedding implies that $B^{\alpha}_{pp}(\mathcal{X})$ embeds continuously into $C^{\beta}(\mathcal{X})$ (see Section 4.3.4 in \cite{Triebel2010theory}), with norm estimates
\begin{align}
\norm{f}_{\infty} \lesssim \norm{f}_{C^{\beta}} \lesssim \norm{f}_{B^{\alpha}_{pp}}, \quad \forall f\in B^{\alpha}_{pp}(\mathcal{X}).
\end{align}
We repeatedly use the inequalities from \cite[Remark 1 on p.143 and Theorem 2.8.3]{Triebel2010theory}
\begin{align}
\norm{fg}_{B^{\alpha}_{pp}} & \lesssim \norm{f}_{B^{\alpha}_{pp}}\norm{g}_{B^{\alpha}_{pp}}, \qquad \alpha >d/p, \label{Sobolevinter1} \\
\norm{fg}_{B^{\alpha}_{pp}} & \lesssim \norm{f}_{C^{\alpha}}\norm{g}_{B^{\alpha}_{pp}}, \qquad \alpha \geq 0. \label{Sobolevinter2}
\end{align}

\section{Statistical setting for general inverse problems}\label{sec:GeneralSetting}
\subsection{Forward map and variational posterior}    
    Let $(\mathcal{X},\mathcal{A})$ and $(\mathcal{Z},\mathcal{B})$ be measurable spaces equipped with probability measure $\lambda$ and Lebesgue measure respectively. Let $\mathcal{Z}$ be a bounded smooth domain in $\mathbb{R}^d$. Moreover, let $V$ be a vector space of fixed finite dimension $p_V \in \mathbb{N}$, with inner product $\pdt{\cdot}{\cdot}_V$ and norm $\norm{\ \cdot\ }_V$. For parameter $f$, we assume that the parameter spaces $\mathcal{F}$ is a Borel-measurable subspace of $L^2(\mathcal{Z},\mathbb{R})$. We define $G:\mathcal{F} \rightarrow L^2_{\lambda}(\mathcal{X},V)$ to be a measurable forward map. However, the parameter spaces $\mathcal{F}$ may not be linear spaces. In order to use Besov priors that are naturally supported in linear spaces such as Sobolev spaces, we now consider a bijective reparametrization of $\mathcal{F}$ through a regular link function $\Phi$ as in \cite{IntroNonLinear_nickl2020convergence,IntroNonLinear_giordano2020consistency}. The parameter spaces can be reparametrized as $\mathcal{F} = \{f = \Phi(\theta)\ : \ \theta \in \Theta\}$ where $\Theta$ is a subspace of $L^2(\mathcal{Z},\mathbb{R})$. For the forward map $G:\mathcal{F} \rightarrow L^2_{\lambda}(\mathcal{X},V)$, we define the reparametrized forward map $\mathcal{G}$ by 
\[\mathcal{G}(\theta) = G(\Phi(\theta)), \qquad \forall \ \theta \in \Theta,\] 
and consider independent and identically distributed (i.i.d.) random variables $(Y_i,X_i)_{i=1}^N$ of the following random design regression model
\begin{equation}\label{model}
    Y_i = \mathcal{G}({\theta})(X_i) + \varepsilon_i, \quad \varepsilon_i \mathop{\sim}^{iid} N(0,I_V), \quad i = 1, 2, \dots, N,
\end{equation}
with the diagonal covariance matrix $I_V$.
The joint law of the random variables $(Y_i,X_i)_{i=1}^N$ defines a product probability measure on $(V\times\mathcal{X})^N$ and we denote it by 
$P_{\theta}^{(N)}=\bigotimes_{i=1}^NP_{\theta}^i$ where $P_{\theta}^i=P_{\theta}^1$ for all $i$. We write $P_{\theta}$ for the law of a copy $(Y,X)$ which has probability density 
\begin{equation}\label{modeldensity}
    \frac{dP_{\theta}}{d\mu}(y,x) \equiv p_{\theta}(y,x) = \frac{1}{(2\pi)^{p_V/2}}\mexp{-\frac{1}{2}\abs{y-\mathcal{G}_{\theta}(x)}_V^2}
\end{equation}
for dominating measure $d\mu = dy \times d\lambda$ where $dy$ is Lebesgue measure on $V$. We also use the notation
\[D_N := \set{(Y_i,X_i) : i = 1, 2, \dots, N}, \qquad N\in \mathbb{N}\]
to represent the full data.
\par
Now we introduce the variational posterior distribution. Let prior $\Pi$ be a Borel probability measure on $\Theta$. The posterior distribution  $\Pi_N(\cdot|D_N) = \Pi_N(\cdot|(Y_i,X_i)_{i=1}^N)$ of $\theta|(Y_i,X_i)_{i=1}^N$ on $\Theta$ arising from the data in model (\ref{model}) is given by
\begin{equation}\label{Post}
    d\Pi(\theta|D_N) = \frac{\prod^{N}_{i=1}p_{\theta}(Y_i,X_i)d\Pi(\theta)}{\int_{\Theta}\prod^{N}_{i=1}p_{\theta}(Y_i,X_i)d\Pi(\theta)}.
\end{equation}
To address computational difficulty of forward problems in posterior distributions, variational inference aim to find the closest element to the posterior distribution in a variational set $\mathcal{Q}$ consisting of probability measures. The most popular definition of variational inference is given through the KL divergence defined as
\begin{equation*}
    D(P_1\Vert P_2) = \left\{ \begin{aligned}
                &\int \log\bbra{\frac{dP_1}{dP_2}} dP_1 & \text{if}\ P_1 \ll P_2,\\
                 &+ \infty & \text{otherwise}.
            \end{aligned} \right.
\end{equation*} Then, the variational posterior is defined as 
\begin{equation}\label{variationalposterior}
    \hat{Q} = \mathop{\mathrm{argmin}}_{Q\in \mathcal{Q}} D(Q\Vert \Pi(\theta|D_{N})).
\end{equation}
\par
The choice of variational set $\mathcal{Q}$ usually determines the effect of variational posterior. The variational posterior from the variational set $\mathcal{Q}$ can be regarded as the projection of the true posterior onto $\mathcal{Q}$ under KL-divergence. When $\mathcal{Q}$ is large enough (even consisting of all probability measures), $\hat{Q}$ is exactly the true posterior $\Pi(\theta|D^{N})$. However, a larger $\mathcal{Q}$ often leads to a higher computational cost of the optimization problem (\ref{variationalposterior}). Thus, it is of significance to choose a proper variational set $\mathcal{Q}$ in order to reduce the computational difficulty and give a good approximation of posterior simultaneously. We will give the convergence rate of variational posteriors with a mean-field variational set, see Section \ref{sec:VariationalConsistency}.
\subsection{Besov-type priors}
In this paper we consider Besov-type priors on $L^2(\mathcal{Z})$ which arise as random wavelet series expansions with weights distributed as p-exponential distributions, see also \cite{agapiou2024laplace,IntroNonLinear_nickl2020convergence}. First, we introduce an orthonormal wavelet basis of the Hilbert space $L^2(\mathbb{R}^d)$ as
\begin{equation} \label{Dbase}
\left\{\psi_{lr}: r \in \mathbb{Z}^d, l \in \mathbb{N}\cup\{-1,0\}\right\}
\end{equation}
composed of compactly supported Daubechies wavelets with sufficiently large regularity $S>0$ (for details see Chapter 4 in \cite{gin2015mathematical}). 
For a compact set $K\subset\mathcal{Z}\subset\mathbb{R}^d$, We further denote by
\begin{equation} \label{DbaseK}
\Psi_K = \left\{\psi_{lr}: r \in R_l, l \in \mathbb{N}\cup\{-1,0\}\right\}
\end{equation}
the sub-basis of wavelets, where $R_l$ denotes the set of indices $r$ for which the support of $\psi_{lr}$ intersects $K$.
We note that $R_l$ satisfies $\abs{R_l}\simeq 2^{ld}$. Any function $f\in L^2_K(\mathcal{Z})$ can be uniquely represented by this sub-basis as
\[f= \sum_{l=-1}^{+\infty}\sum_{r\in R_l}f_{lr}\psi_{lr}, \qquad f_{lr}=\pdt{f}{\psi_{lr}}_{L^2(\mathcal{Z})}.\]
Then, we give definitions of univariate p-exponential distributions and Besov priors.
\begin{definition}[Univariate p-exponential distributions]
    Let $p\geq1, a\in \mathbb{R}, b>0$. A real random variable $\xi$ is called a univariate p-exponential variable with location $a$ and scale $b$ if it has a probability density function $f(x)\propto \exp(-\frac{\abs{x-a}^p}{pb^p})$, which is represented by $\xi \sim Exp(p;a,b)$. Particularly, we call $\xi$ a univariate standard p-exponential variable if $\xi\sim Exp(p;0,1)$.
\end{definition}
\begin{definition}[$B_{pp}^{\alpha}$-Besov priors]\label{Def:BesovPrior}
     For $p\geq 1$, regularity level $\alpha>\frac{d}{p}$, regularity of wavelets $S>\alpha$ and i.i.d univariate standard p-exponential random variables $\{\xi_{lr}\}_{l\geq -1, r\in R_l},$ let
     \begin{equation}\label{Pi'}
         F = \sum_{l=-1}^{+\infty}\sum_{r\in R_l}2^{l(\frac{d}{p}-\frac{d}{2}-\alpha)}\xi_{lr}\psi_{lr}.
     \end{equation}
    Denote the law of $F$ by $\Pi'$. For the given compact set $K$ corresponding to $\Psi_K$, fix a cut-off function $\chi \in C_c^{\infty}(\mathcal{Z})$ such that $\chi = 1$ on $K$. Then, for some scaling constant $\rho$, the $B_{pp}^{\alpha}$-Besov priors $\Pi$ is defined by
    \begin{equation}\label{Pi}
        \Pi = \mathcal{L}(\rho\chi F), \quad F\sim\Pi'.
    \end{equation}
\end{definition}

We see that when we set $p=1$ and $p=2$, the general Besov prior reduced to Laplace and Gaussian wavelet priors respectively.  Convergence rates of true posterior for inverse problems with these specific priors have been considered in \cite{agapiou2024laplace,IntroNonLinear_nickl2020convergence}.
Regarding variational posteriors, \cite{zu2024consistencyvariationalbayesianinference} gives the convergence rates for non-linear inverse problems with Gaussian priors. 
For the more general situation $p\geq1$ in our paper, we prove that analogous properties used in the proof also hold for general Besov priors, which leads to similar convergence results. However, the decentering property (see Lemma \ref{lem:decenter}) is restricted to $p\in [1,2]$. Thus, when $p>2$, it is necessary for us to consider the truncated Besov priors which satisfy an analogous decentering property (see Lemma \ref{lem:decenterJ} for details).
\begin{definition}[Truncated $B_{pp}^{\alpha}$-Besov priors]\label{Def:TrunBesovPrior}
     For $p\geq 1$, regularity level $\alpha>\frac{d}{p}$, regularity of wavelets $S>\alpha$, truncated point $J\in \mathbb{N}$ and i.i.d univariate standard p-exponential random variables $\{\xi_{lr}\}_{l\geq -1, r\in R_l},$ let
     \begin{equation}\label{Pi'J}
         F_J = \sum_{l=-1}^{J}\sum_{r\in R_l}2^{l(\frac{d}{p}-\frac{d}{2}-\alpha)}\xi_{lr}\psi_{lr}.
     \end{equation}
    Denote the law of $F_J$ by $\Pi'_J$. For the given compact set $K$ corresponding to $\Psi_K$, fix a cut-off function $\chi \in C_c^{\infty}(\mathcal{Z})$ such that $\chi = 1$ on $K$. Then, for some scaling constant $\rho$, the truncated $B_{pp}^{\alpha}$-Besov priors $\Pi$ is defined by
    \begin{equation}\label{PiJ}
        \Pi = \mathcal{L}(\rho\chi F_J), \quad F_J\sim \Pi'_J.
    \end{equation}
\end{definition}
    \section{Variational posterior consistency theorem} \label{sec:VariationalConsistency}
    In this section, assuming the data $D_N$ to be generated through model (\ref{model}) of law $P^{(N)}_{\theta_0}$ and the forward map $\mathcal{G}$ to satisfy certain conditions, we will show that the variational posterior distribution arising from certain Besov priors concentrates near any sufficiently regular ground truth $\theta_0$ (or, equivalently, $f_0 = \Phi(\theta_0)$), and show the rate of this contraction.
\subsection{Conditions for the forward map}
    In this part, we give the regularity and conditional stability conditions which resemble the conditions in \cite{IntroNonLinear_nickl2023bayesian}. The following regularity condition requires the forward map $\mathcal{G}$ to be uniformly bounded and Lipschitz continuous. Compared to the conditions in \cite{IntroNonLinear_nickl2023bayesian},  we further require polynomial growth in parameter's norm of uniform upper bound and the Lipschitz constants.
\begin{condition}[Locally Lipschitz]\label{con:condreg}
     Consider a parameter space $\Theta \subseteq L^2(\mathcal{Z},\mathbb{R})$. 
  The forward map $\mathcal{G}:\Theta \rightarrow L^2_{\lambda}(\mathcal{X},V)$ is measurable. 
  Suppose for some normed linear subspace $(\mathcal{R},\Vert \cdot\Vert _{\mathcal{R}})$ of $\Theta$ and all $M>1$, there exist finite constants $C_U > 0$, $C_L > 0$ , ${\kappa}\geq0$, $\mu\geq 0$ and $l \geq 0$ such that 
  \begin{gather}
          \mathop{\sup}_{\theta \in \Theta \cap B_{\mathcal{R}}(M)}\mathop{\sup}_{x \in \mathcal{X}}\abs{\mathcal{G}(\theta)(x)}_V\leq C_{U}M^\mu,\label{bound}\\
          \norm{\mathcal{G}(\theta_1)-\mathcal{G}(\theta_2)}_{L^{2}_{\lambda}(\mathcal{X},V)} \leq C_L(1+\norm{\theta_1}^l_{\mathcal{R}}\vee \norm{\theta_2}^l_{\mathcal{R}})\norm{\theta_1-\theta_2} _{(H^{{\kappa}})^*}, \quad \theta_1,\theta_2 \in \mathcal{R}.\label{lip}
      \end{gather}
\end{condition}
Similarly, the conditional stability condition is given below.
\begin{condition}[Stability]\label{con:condstab}
    Consider a parameter space $\Theta \subseteq L^2(\mathcal{Z},\mathbb{R})$. 
  The forward map $\mathcal{G}:\Theta \rightarrow L^2_{\lambda}(\mathcal{X},V)$ is measurable. 
  Suppose for some normed linear subspace $(\mathcal{R},\Vert \cdot\Vert _{\mathcal{R}})$ of $\Theta$, some ground truth $\theta_0$ and all $M>1$, there exist a function $F: \mathbb{R}^{+} \rightarrow \mathbb{R}^{+}$ and finite constants $C_T > 0$, $\nu\geq 0$ such that
        \begin{equation}\label{stab}
              F(\Vert f_{\theta} - f_{\theta_0}\Vert ) \leq C_TM^\nu\Vert \mathcal{G}(\theta)-\mathcal{G}(\theta_0)\Vert _{L^{2}_{\lambda}(\mathcal{X},V)}, \forall \theta \in \Theta \cap B_{\mathcal{R}}(M).
          \end{equation}
\end{condition}
The linear subspace $(\mathcal{R},\Vert \cdot\Vert _{\mathcal{R}})$ in Conditions \ref{con:condreg} and \ref{con:condstab} is usually chosen as the support of Besov priors, such as $(B_{pp}^b(\mathcal{Z}),{\Vert \cdot\Vert _{B_{pp}^b(\mathcal{Z})}})$ for $b<\alpha-\frac{d}{p}$.
These regularity and stability conditions on the forward map $\mathcal{G}$ require explicit estimates on growth rates of coefficients. We will verify these conditions for the inverse problems related to the Darcy flow equation and a subdiffusion equation in Section \ref{sec:ApplicationSection}.
\subsection{Results for rescaled Besov priors}
    Consider the centred Besov probability measure $\Pi'$ defined by \eqref{Pi'}. We built the recaled Besov priors $\Pi_N$ through a $N$-dependent rescaling step to $\Pi'$ as in \cite{IntroNonLinear_giordano2020consistency}.
        Let $\Pi'$ be the centred Besov probability measure $\Pi'$ defined in Definition \ref{Def:BesovPrior}. For $\kappa$ in Condition \ref{con:condreg}, let $\Pi_N$ be the corresponding $B_{pp}^{\alpha}$-Besov prior from Definition \ref{Def:BesovPrior}, with scaling constant
        \[\rho = (N\varepsilon_N^2)^{-\frac{1}{p}}, \qquad \varepsilon_N = N^{-\frac{\alpha+\kappa}{2\alpha+2\kappa+d}},\]
        where $N$ is the number of data points from the model \eqref{model}.
    We see the prior $\Pi = \Pi_N$ arises from the base prior $\Pi'$ defined as the law of
    \begin{equation}\label{rescaledprior}
        \theta = N^{-\frac{d}{p(2\alpha+2\kappa+d)}}\theta',
    \end{equation}
    for $\theta' \sim \Pi'$, ``regularity'' parameter $\alpha \geq 0$ of the ground truth $\theta_0$ and $\kappa$ the ``forward smoothing degree'' of $\mathcal{G}$ in condition \ref{con:condreg}.
    % Again, $\Pi_N$ defines a centered Gaussian prior on $\Theta$ and its RKHS $H_N$ is still $H$ but with the norm
    % \begin{equation*}
    %     \norm{\theta}_{\mathcal{H}_N} = N^{d/(4\alpha+4k+2d)}\norm{\theta}_{\mathcal{H}}, \quad \forall \theta \in \mathcal{H}.
    % \end{equation*}
    The first result shows that the variational posterior converge towards the ground truth $\theta_0$ with an explicit rate.
    \begin{theorem} \label{mainthm}
      Consider $\theta_0\in B_{pp}^{\alpha}(\mathcal{Z})$ supported in the compact set $K$ for some $\alpha>\frac{d}{p}$ and $p\in[1,2]$.
      Suppose Condition \ref{con:condreg} holds for forward map $\mathcal{G}$,  separable normed linear subspace $(\mathcal{R},\Vert \cdot\Vert _{\mathcal{R}}) = (B_{pp}^b(\mathcal{Z}),\Vert \cdot\Vert _{B_{pp}^b(\mathcal{Z})})$ with some $b<\alpha-\frac{d}{p}$ and finite constants $C_U > 0$, $C_L > 0$ , $\kappa\geq0$, $\mu\geq 0$ and $l \geq 0$.
      % Let $\Pi^{'}$ satisfy Condition \ref{prior} for this $\mathcal{R}$ with $RKHS$ $\mathcal{H}$. Suppose for some integer $\alpha \geq 0$, $\mathcal{H}$ satisfies the continuous embedding
      % \[ \mathcal{H}\subset H^{\alpha}_c(\mathcal{X})\text{ if }k\geq 1/2\quad \text{ or } \mathcal{H}\subset H^{\alpha}(\mathcal{X})\text{ if }k < 1/2.\]
      Denote by $\Pi_N$ the rescaled prior as in (\ref{rescaledprior}) with $\theta' \sim \Pi'$, and $\Pi_N(\cdot|(Y_i,X_i)_{i=1}^N) = \Pi_N(\cdot|D_N)$ is the corresponding posterior distribution in (\ref{Post}) arising from data in the model (\ref{model}).
      Assume that $\alpha + \kappa \geq \frac{d(l+1)}{p}$. Then, for $\varepsilon_N=N^{-\frac{\alpha+\kappa}{2\alpha+2\kappa+d}}$, $\gamma_N^2 = \frac{1}{N}\mathop{\inf}_{Q \in \mathcal{S}}P_{\theta_0}^{(N)}D(Q\Vert \Pi_N(\cdot|D_N))$ and variational posterior $\hat{Q}$ defined in (\ref{variationalposterior}), we have
      \begin{equation}
          P_{\theta_0}^{(N)}\hat{Q}\Vert \mathcal{G}(\theta)-\mathcal{G}(\theta_0)\Vert _{L^{2}_{\lambda}}^{\frac{2}{\mu+1}}\lesssim \varepsilon_N^{\frac{2}{\mu+1}} +\gamma^2_N \cdot \varepsilon_N^{-\frac{2\mu}{\mu+1}}.
      \end{equation}
      Moreover, assume that Condition \ref{con:condstab} also holds for $\mathcal{G}$, $\mathcal{R}$, function $F$ and finite constants $C_T > 0$, $\nu\geq 0$. Then, we further have
      \begin{equation}
          P_{\theta_0}^{(N)}\hat{Q}[F(\Vert f_{\theta} - f_{\theta_0}\Vert )]^{\frac{2}{\mu+\nu+1}}\lesssim \varepsilon_N^{\frac{2}{\mu+\nu+1}} +\gamma^2_N \cdot \varepsilon_N^{-\frac{2\mu+2\nu}{\mu+\nu+1}}.
      \end{equation}
    \end{theorem}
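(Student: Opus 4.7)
The plan is to apply the ``prior mass and testing'' framework of Ghosal--van der Vaart, as refined for variational posteriors for inverse problems in \cite{zu2024consistencyvariationalbayesianinference}, but now with $p$-exponential Besov priors replacing Gaussian priors. First I would verify \emph{exact} posterior contraction at rate $\varepsilon_N$ on the sieve
\[\mathcal{A}_N = \{\theta : \|\mathcal{G}(\theta)-\mathcal{G}(\theta_0)\|_{L^2_\lambda} \leq L\varepsilon_N\} \cap \{\|\theta\|_{\mathcal{R}}\leq M\},\]
for some fixed $M>1$ large and some $L$ to be chosen, by establishing the three standard hypotheses: (a) a small-ball prior estimate $\Pi_N(\|\theta-\theta_0\|_{(H^\kappa)^*}\lesssim \varepsilon_N) \gtrsim e^{-C_1 N\varepsilon_N^2}$, which upgrades to a KL-neighbourhood bound via the Lipschitz estimate (\ref{lip}) and the envelope (\ref{bound}); (b) a sieve-complement tail $\Pi_N(\|\theta\|_{\mathcal{R}}>M) \leq e^{-C_2 N\varepsilon_N^2}$, following from large-deviation bounds for $p$-exponential variables after the rescaling $\rho=(N\varepsilon_N^2)^{-1/p}$; and (c) exponentially powerful tests of $\theta_0$ versus $\{d>L\varepsilon_N\}\cap B_{\mathcal{R}}(M)$, built from (\ref{lip}) and a metric entropy bound for $B_{\mathcal{R}}(M)$ in $(H^\kappa)^*$. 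Under the growth assumption $\alpha+\kappa \geq d(l+1)/p$ these ingredients combine, by the standard Ghosal--van der Vaart argument adapted to the regression likelihood (\ref{modeldensity}), into $P^{(N)}_{\theta_0}\Pi_N(\mathcal{A}_N^c\mid D_N) \leq e^{-C_3 N\varepsilon_N^2}$.

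Second, I would transfer this concentration to $\hat{Q}$ via the Donsker--Varadhan inequality: for any event $B$ with $\Pi_N(B\mid D_N)\leq 1/2$,
\[\hat{Q}(B) \leq \frac{D(\hat{Q}\|\Pi_N(\cdot|D_N)) + \log 2}{\log(1/\Pi_N(B\mid D_N))}.\]
Taking $B=\mathcal{A}_N^c$, restricting to the $P^{(N)}_{\theta_0}$-typical event on which the denominator is $\gtrsim N\varepsilon_N^2$, taking $P^{(N)}_{\theta_0}$-expectation, and using the optimality $D(\hat{Q}\|\Pi_N(\cdot|D_N))\leq \inf_{Q\in \mathcal{S}} D(Q\|\Pi_N(\cdot|D_N))$ together with the definition of $\gamma_N^2$ yields
\[P^{(N)}_{\theta_0}\hat{Q}(\mathcal{A}_N^c) \lesssim \frac{\gamma_N^2}{\varepsilon_N^2} + e^{-cN\varepsilon_N^2}.\]

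Third, I would split
\[P^{(N)}_{\theta_0}\hat{Q}\bigl[d^{2/(\mu+1)}\bigr] = P^{(N)}_{\theta_0}\hat{Q}\bigl[\mathbf{1}_{\mathcal{A}_N} d^{2/(\mu+1)}\bigr] + P^{(N)}_{\theta_0}\hat{Q}\bigl[\mathbf{1}_{\mathcal{A}_N^c} d^{2/(\mu+1)}\bigr],\]
where $d(\theta,\theta_0):=\|\mathcal{G}(\theta)-\mathcal{G}(\theta_0)\|_{L^2_\lambda}$. The first piece is trivially $\leq (L\varepsilon_N)^{2/(\mu+1)}$. For the second, inside the sieve (\ref{bound}) gives $d\leq 2C_U M^\mu$ so the contribution is of order $M^{2\mu/(\mu+1)}\,P^{(N)}_{\theta_0}\hat{Q}(\mathcal{A}_N^c)$; the complement $\{\|\theta\|_{\mathcal{R}}>M\}$ is handled by a peeling argument over dyadic shells $\{M 2^j<\|\theta\|_{\mathcal{R}}\leq M 2^{j+1}\}$, applying Donsker--Varadhan on each shell and using the exponentially small prior mass against the polynomial growth $d\lesssim \|\theta\|^\mu_{\mathcal{R}}$ to produce a summable series. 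Balancing the sieve radius $M$ against the testing radius $L\varepsilon_N$ yields the sharper exponent $-2\mu/(\mu+1)$ in the second term. The stability conclusion is proved identically after invoking Condition \ref{con:condstab}: on $\mathcal{A}_N$, (\ref{stab}) gives $F(\|f_\theta-f_{\theta_0}\|) \leq C_T M^\nu L\varepsilon_N$, while on the sieve $F\lesssim M^{\mu+\nu}$, and the exponent $2/(\mu+\nu+1)$ emerges from the same balancing with one more factor of $M^\nu$.

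The main obstacle will be Step 1(a): while for Gaussian ($p=2$) priors the decentering / small-ball bound is classical (Cameron--Martin shift plus Borell--Tsirelson), for general $p$-exponential priors the analogue is subtle because the ``Cameron--Martin'' space of a $p$-exponential measure is not Hilbertian and the cost of shifting by a mean $\theta_0\in B^\alpha_{pp}$ must be computed directly at the level of wavelet coefficients. The restriction $p\in[1,2]$ in the statement reflects exactly this limitation, which is why the $p>2$ case requires the truncated prior of Definition \ref{Def:TrunBesovPrior} and a separate decentering lemma. A secondary technical hurdle is arranging the weighting in the Step 3 decomposition so that the sharper exponent $-2\mu/(\mu+1)$ emerges rather than the naive $-2$ coming straight out of the denominator $N\varepsilon_N^2$ in Step 2; this appears to require optimizing the sieve radius $M$ \emph{jointly} with the choice of testing radius $L$ through the exponential weighting in the variational inequality.
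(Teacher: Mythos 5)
The overall framework — prior mass plus testing plus a KL/Donsker--Varadhan transfer to $\hat{Q}$ — is the one the paper uses, and you correctly identify the critical technical obstacle: the decentering bound for $p$-exponential measures (Lemma~\ref{lem:decenter}) only holds for $p\in[1,2]$, which is precisely why the truncated prior and Lemma~\ref{lem:decenterJ} are introduced for $p>2$. However, there is a genuine gap in your Step 3, which you half-acknowledge but do not resolve: with a fixed sieve radius $M$ and fixed testing radius $L\varepsilon_N$, the contribution from $\{\|\theta\|_{\mathcal{R}}\leq M,\ d>L\varepsilon_N\}$ is at best $M^{2\mu/(\mu+1)}\,P^{(N)}_{\theta_0}\hat{Q}(\mathcal{A}_N^c)\lesssim M^{2\mu/(\mu+1)}\gamma_N^2/\varepsilon_N^2$, which is strictly weaker than the claimed $\gamma_N^2\varepsilon_N^{-2\mu/(\mu+1)}$ since $2>2\mu/(\mu+1)$. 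You cannot repair this by shrinking $M$ — matching the exponents would force $M\lesssim\varepsilon_N^{1/\mu}\to 0$, but $M$ must stay large for the sieve-complement prior mass to be exponentially small. Nor does your dyadic peeling over $\{M2^j<\|\theta\|_{\mathcal{R}}\leq M2^{j+1}\}$ fix it: that handles the tail $\{\|\theta\|_{\mathcal{R}}>M\}$, not the inside of the ball where $d$ is only moderately large, and in any case the resulting series $\sum_j 2^{j[2\mu/(\mu+1)-p]}$ only converges when $p(\mu+1)>2\mu$, which fails for $p$ near $1$ and $\mu\geq 1$.

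The paper avoids this by not decomposing after the fact. It invokes \cite[Theorem~2.1]{zhang2020convergence} directly with a \emph{pre-weighted} loss function
\[
L(P^{(N)}_{\theta},P^{(N)}_{\theta_0}) \;=\; N\,\varepsilon_N^{\frac{2\mu}{\mu+1}}\,\|\mathcal{G}(\theta)-\mathcal{G}(\theta_0)\|_{L^2_\lambda}^{\frac{2}{\mu+1}}\big/\tilde{C}_1,
\]
and a nested family of sieves $\Theta_N(\varepsilon)$ whose $\mathcal{R}$-radius $r_N(\varepsilon)=(\varepsilon/\varepsilon_N)^{2/p}$ grows with $\varepsilon$. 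The point of the $\varepsilon$-dependent sieve radius and the weighting by $\varepsilon_N^{2\mu/(\mu+1)}$ is that, on $\Theta_N(\varepsilon)$, the uniform envelope bound~\eqref{bound} gives $h(p_\theta,p_{\theta_0})\gtrsim (Mr_N(\varepsilon))^{-\mu}\,d$, so the testing event $\{h\geq 4\bar m\varepsilon\}$ contains $\{L\geq N\varepsilon^2\}$ at \emph{every} scale $\varepsilon>\varepsilon_N$ simultaneously. Zhang--Gao then returns $P^{(N)}_{\theta_0}\hat{Q}[L]\lesssim N(\varepsilon_N^2+\gamma_N^2)$, and dividing by $N\varepsilon_N^{2\mu/(\mu+1)}$ produces exactly the exponents $\varepsilon_N^{2/(\mu+1)}+\gamma_N^2\varepsilon_N^{-2\mu/(\mu+1)}$ with no further work. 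In short: the multi-scale peeling you gesture at in your closing paragraph is not an optional refinement but the crux, and the cleanest route to it is to fold the $\varepsilon_N$-weighting into the loss $L$ before invoking the variational contraction theorem, rather than trying to extract the exponent by decomposing $d^{2/(\mu+1)}$ afterwards.

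Two smaller points: the prior small-ball estimate in (C3) is not just $\Pi_N(\|\theta-\theta_0\|_{(H^\kappa)^*}\lesssim\varepsilon_N)$; one must intersect with a fixed $\mathcal{R}$-ball $\{\|\theta-\theta_0\|_{\mathcal{R}}\leq M'\}$ so that~\eqref{bound} applies, and the paper handles that carefully. And the entropy bound in (C1) is not computed on $B_{\mathcal{R}}(M)$ directly but on $B_{pp}^{\alpha}(M(\varepsilon/\varepsilon_N)^{2/p})$ via the decomposition $\theta=\theta_1+\theta_2$ built into $H_N(\varepsilon)$, using~\eqref{entropy} and the constraint $\alpha+\kappa\geq d(l+1)/p$.
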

        The convergence rate can be represented as $(\varepsilon_N^2+\gamma_N^2) \cdot \varepsilon_N^{-\eta}$ for some nonnegative $\eta < 2$. We can see that the convergence rate is controlled by two terms $\varepsilon_N^2$ and $\gamma_N^2$. The first term $\varepsilon_N^2$ is the convergence rate of the true posterior $\Pi(\theta|D_N)$ and it can reach the same rate in \cite{IntroNonLinear_nickl2023bayesian}. The second term $\gamma_N^2$ characterizes the approximation error given by the variational set $\mathcal{Q}$. A larger $\mathcal{Q}$ leads to a smaller rate $\gamma_N^2$.
        To use Theorem \ref{mainthm} in specific problems, we need to bound the variational approximation error $\gamma_N^2$. In \cite{zhang2020convergence}, the authors give an upper bound 
        \[\gamma_N^2 \leq \mathop{\inf}_{Q \in \mathcal{Q}}R(Q)\]
        where \[R(Q) = \frac{1}{N}(D(Q\Vert \Pi) + Q[D(P^{(N)}_{\theta_0}\Vert P^{(N)}_{\theta})]).\]
        We estimate $\mathop{\inf}_{Q \in \mathcal{Q}}R(Q)$ in the next theorem.
        Define a p-exponential mean-field family \textcolor{black}{$\tilde{\mathcal{Q}}_E$} as
\begin{equation}\label{OGMFinf}
\Bigg\{\textcolor{black}{\tilde{Q}} = \mathop{\bigotimes}_{l=-1}^{+\infty}\mathop{\bigotimes}_{r\in R_l}Exp(p;a_{lr},b_{lr}):a_{lr} \in \mathbb{R}, b_{lr} \geq 0\Bigg\}.
\end{equation}
We define a map $\Psi$ as
\begin{equation}\label{pushforwardmapinf}
\Psi(\tilde{\theta}) = \sum_{l=-1}^{\infty}\sum_{r\in R_l}\tilde{\theta}_{lr}\chi\psi_{lr},\quad \forall \tilde{\theta} = (\tilde{\theta}_{lr})\in \mathbb{R}^{\infty},
\end{equation}
which maps any sequences $\tilde{\theta}\in \mathbb{R}^{\infty}$ to the function space span$\{\chi\psi_{lr}\}$.
Thus, our variational set $\mathcal{Q}_E$ is obtained by the push-forward of $\tilde{\mathcal{Q}}_E$ via $\Psi$, i.e.,
\begin{equation}\label{GMFinf}
\mathcal{Q}_E = \Bigg\{Q = \tilde{Q}\circ\Psi^{-1} : \tilde{Q} \in \tilde{\mathcal{Q}}_E\Bigg\}.
\end{equation}
We can find a probability measure $Q\in \mathcal{Q}_E$ such that $R(Q) \lesssim \varepsilon_N^2\log N$, which gives the convergence rate in the following theorem.
        \begin{theorem} \label{boundgam}
            Suppose that the forward map $\mathcal{G}$ satisfies Condition \ref{con:condreg} and Besov prior $\Pi_N$, the true parameter $\theta_0$ are as defined in Theorem \ref{mainthm}, then there exists a probability measure $Q_N\in \mathcal{Q}_E$ such that
            \begin{equation}
                R(Q_N) \lesssim \varepsilon_N^2\log N
            \end{equation}
            for $\varepsilon_N=N^{-\frac{\alpha+\kappa}{2\alpha+2\kappa+d}}$.
        \end{theorem}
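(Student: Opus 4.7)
The strategy is to exhibit a concrete $Q_N\in\mathcal{Q}_E$ and bound each term in $R(Q_N)=\tfrac{1}{N}D(Q_N\Vert\Pi_N)+\tfrac{1}{2}Q_N[\Vert\mathcal{G}(\theta)-\mathcal{G}(\theta_0)\Vert_{L^2_\lambda}^2]$, where the reduction of the likelihood KL to an $L^2_\lambda$-distance uses the Gaussian noise structure of \eqref{model}, and Condition \ref{con:condreg} bounds the integrand by the polynomial Lipschitz factor $(1+\Vert\theta\Vert_\mathcal{R}^l\vee\Vert\theta_0\Vert_\mathcal{R}^l)^2$ times $\Vert\theta-\theta_0\Vert_{(H^\kappa)^*}^2$.

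The key observation is that the rescaled prior $\Pi_N$ itself lies in $\mathcal{Q}_E$: expanding
\begin{equation*}
\rho\chi F=\sum_{l\geq-1,\,r\in R_l}\bigl(\rho\,2^{l(d/p-d/2-\alpha)}\xi_{lr}\bigr)\chi\psi_{lr}=\Psi(\tilde\theta)
\end{equation*}
with $\tilde\theta_{lr}\sim\text{Exp}(p;0,\rho\,2^{l(d/p-d/2-\alpha)})$ independent identifies $\Pi_N$ with a $\Psi$-pushforward of a product of $p$-exponentials. For a truncation level $L_N$ with $2^{L_Nd}\simeq N\varepsilon_N^2$, I therefore define $Q_N=\Psi_{\ast}\bigotimes_{l,r}\text{Exp}(p;a_{lr},b_{lr})$ by recentering only the low-frequency modes, $a_{lr}=\theta_{0,lr}\mathbf{1}_{\{l\leq L_N\}}$ and $b_{lr}=\rho\,2^{l(d/p-d/2-\alpha)}$ throughout.

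By the data-processing inequality $D(Q_N\Vert\Pi_N)$ reduces to a sum of univariate KL divergences between $\text{Exp}(p;\theta_{0,lr},b_{lr})$ and $\text{Exp}(p;0,b_{lr})$ for $l\leq L_N$, and the $p$-exponential decentering estimate (Lemma \ref{lem:decenter}) then yields
\begin{equation*}
D(Q_N\Vert\Pi_N)\lesssim\sum_{l\leq L_N,\,r\in R_l}\frac{|\theta_{0,lr}|^p}{b_{lr}^p}=N\varepsilon_N^2\sum_{l\leq L_N,\,r\in R_l}2^{lp(\alpha+d/2-d/p)}|\theta_{0,lr}|^p\leq N\varepsilon_N^2\Vert\theta_0\Vert_{B_{pp}^\alpha}^p,
\end{equation*}
contributing $O(\varepsilon_N^2)$ to $R(Q_N)$. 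For the likelihood term I would apply Cauchy--Schwarz to separate the moment factor $Q_N[(1+\Vert\theta\Vert_\mathcal{R}^l)^4]^{1/2}$ from the fourth moment of $\Vert\theta-\theta_0\Vert_{(H^\kappa)^*}$. The moment factor is controlled using Fernique-type concentration inequalities for $p$-exponential wavelet series in the Besov norm $\Vert\cdot\Vert_{B_{pp}^b}$, where the assumption $\alpha+\kappa\geq d(l+1)/p$ keeps these moments bounded uniformly in $N$ up to $\operatorname{polylog}(N)$ factors; the distance moment is analysed via a bias--variance split, with the bias estimated by a wavelet approximation bound for $B_{pp}^\alpha$ at level $L_N$ and the centred variance given by $\sum_{l,r}\operatorname{Var}(\tilde\theta_{lr})\Vert\chi\psi_{lr}\Vert_{(H^\kappa)^*}^2$.

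The main obstacle is calibrating the truncation level $L_N$ together with these moment and approximation bounds so that the combined likelihood contribution is $O(\varepsilon_N^2\log N)$; this is especially delicate in the regime $p<2$, where a sharp non-linear wavelet approximation estimate for $B_{pp}^\alpha$ functions, exploiting the sparser Besov coefficient structure, is needed, and it is precisely at this step that the logarithmic excess factor in the target rate $\varepsilon_N^2\log N$ is genuinely incurred.
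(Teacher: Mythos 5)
Your construction has a genuine gap in the likelihood term, and it is precisely the step you defer to at the end. You keep the prior scales $b_{lr}=\rho\,2^{l(d/p-d/2-\alpha)}$ at \emph{all} levels and only recenter the low frequencies. The KL term then indeed costs only $O(N\varepsilon_N^2)$ (your decentering bound is fine, and even avoids the logarithm), but the variance contribution to $Q_N\Vert\theta-\theta_0\Vert^2_{(H^\kappa)^*}$ is already fatal at the coarsest level: $\mathrm{Var}(\tilde\theta_{lr})\simeq b_{lr}^2$, so
\begin{equation*}
\sum_{l\le L_N}\sum_{r\in R_l}\mathrm{Var}(\tilde\theta_{lr})\,2^{-2l\kappa}
\;\simeq\;\sum_{l\le L_N}2^{ld}\,(N\varepsilon_N^2)^{-2/p}\,2^{-2l(\alpha+\kappa+d/2-d/p)}
\;\simeq\;(N\varepsilon_N^2)^{-2/p}=N^{-\frac{2d}{p(2\alpha+2\kappa+d)}},
\end{equation*}
since the exponent of $2^l$ is negative. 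Because $\alpha+\kappa>d/p$, this is \emph{polynomially larger} than $\varepsilon_N^2=N^{-2(\alpha+\kappa)/(2\alpha+2\kappa+d)}$, so no choice of the truncation level $L_N$ can rescue the bound $Q_N[D(P_{\theta_0}\Vert P_\theta)]\lesssim\varepsilon_N^2\log N$. The paper's proof resolves exactly this tension: for $l\le J$ (with $2^{Jd}\simeq N\varepsilon_N^2$) the variational scales are shrunk to $\tau_l=2^{-J(\alpha+\kappa+d/2)}$, which makes the low-frequency variance $\sum_{l\le J}|R_l|\tau_l^2\simeq 2^{-2J(\alpha+\kappa)}\simeq\varepsilon_N^2$, while for $l>J$ the recentered prior scales $\sigma_l$ are kept and the decentering inequality $|x+\theta|^p+|x-\theta|^p-2|x|^p\le 2|\theta|^p$ (valid for $p\in[1,2]$) controls the KL.

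This also corrects your attribution of the logarithmic factor: it does not come from a nonlinear wavelet approximation subtlety for $p<2$, but from the normalizing-constant ratio $\log(C_{p,\sigma_l}/C_{p,\tau_l})=\log(\sigma_l/\tau_l)$ incurred by shrinking the scales, summed over the $\simeq 2^{Jd}\simeq N\varepsilon_N^2$ low-frequency coordinates. Two smaller points: your Cauchy--Schwarz separation of the moment factor from $\Vert\theta-\theta_0\Vert^2_{(H^\kappa)^*}$ is workable in principle ($p$-exponential series have all moments), though the paper instead expands the product and exploits independence across distinct $(l,r)$ via the bounds \eqref{Eb1_inf}--\eqref{Eb2_inf}; and your observation that $\Pi_N$ itself lies in $\mathcal{Q}_E$ is correct but does not help, since the mean-field family must be searched for a member with both small KL to the prior \emph{and} small expected likelihood discrepancy, and these two requirements pull the scales in opposite directions.
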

        \par
        From Theorem \ref{boundgam}, if the variational set $\mathcal{Q}$ contains the probability measure $Q_N$, we can give a bound 
        \[\gamma_N^2 \leq \mathop{\inf}_{Q \in \mathcal{Q}}R(Q) \lesssim \varepsilon_N^2\log N.\]
        This means the approximation error contracts at the same convergence rates of posterior distributions.
        Thus, combining Theorems \ref{mainthm} and \ref{boundgam}, we have the following theorem.
        \begin{theorem}\label{finalthm}
            Consider $\theta_0$, $K$, $\alpha$, $p$, $\Pi'$, $\Pi_N(\cdot|D_N)$, $\varepsilon_N$, $\gamma_N$ be as defined in Theorem \ref{mainthm}.
      Suppose Condition \ref{con:condreg} holds for forward map $\mathcal{G}$,  separable normed linear subspace $(\mathcal{R},\Vert \cdot\Vert _{\mathcal{R}}) = (B_{pp}^b(\mathcal{Z}),\Vert \cdot\Vert _{B_{pp}^b(\mathcal{Z})})$ with some $b<\alpha-\frac{d}{p}$ and finite constants $C_U > 0$, $C_L > 0$ , $\kappa\geq0$, $\mu\geq 0$ and $l \geq 0$.
      Assume that $\alpha + \kappa \geq \frac{d(l+1)}{p}$ and $\theta_0\in B_{pp}^{\alpha}(\mathcal{Z})$ supported in the compact set $K$. Then, for variational posterior $\hat{Q}$ defined in \eqref{variationalposterior} with variational set $\mathcal{Q}_E$ in \eqref{GMFinf}, we have
      \begin{equation}\label{prate}
          P_{\theta_0}^{(N)}\hat{Q}\Vert \mathcal{G}(\theta)-\mathcal{G}(\theta_0)\Vert _{L^{2}_{\lambda}}^{\frac{2}{\mu+1}}\lesssim \varepsilon_N^{\frac{2}{\mu+1}}\log N.
      \end{equation}
      Moreover, assume that Condition \ref{con:condstab} also holds for $\mathcal{G}$, $\mathcal{R}$, function $F$ and finite constants $C_T > 0$, $\nu\geq 0$. Then, we further have
      \begin{equation}\label{rate}
          P_{\theta_0}^{(N)}\hat{Q}[F(\Vert f_{\theta} - f_{\theta_0}\Vert )]^{\frac{2}{\mu+\nu+1}}\lesssim \varepsilon_N^{\frac{2}{\mu+\nu+1}}\log N.
      \end{equation}
    \end{theorem}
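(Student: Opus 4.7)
The plan is to combine Theorem \ref{mainthm} and Theorem \ref{boundgam} by direct substitution, since the two have been set up precisely so that one feeds into the other. First I would invoke Theorem \ref{mainthm} under the hypotheses of the present theorem. Every assumption listed in Theorem \ref{finalthm} (regularity of $\theta_0$, Condition \ref{con:condreg} with the given $\kappa,\mu,l$, the rescaling of $\Pi'$, and the dimensional balance $\alpha+\kappa \geq d(l+1)/p$) is exactly the hypothesis of Theorem \ref{mainthm}, so I obtain
\begin{equation*}
P_{\theta_0}^{(N)}\hat{Q}\|\mathcal{G}(\theta)-\mathcal{G}(\theta_0)\|_{L^2_\lambda}^{\frac{2}{\mu+1}} \lesssim \varepsilon_N^{\frac{2}{\mu+1}} + \gamma_N^2\cdot\varepsilon_N^{-\frac{2\mu}{\mu+1}},
\end{equation*}
and, assuming in addition Condition \ref{con:condstab}, the analogous statement with $\mu$ replaced by $\mu+\nu$ on the left-hand side.

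The second step is to control the variational approximation term $\gamma_N^2$. By the standard upper bound recalled after Theorem \ref{mainthm} (from \cite{zhang2020convergence}), one has $\gamma_N^2 \leq \inf_{Q\in\mathcal{Q}_E} R(Q)$ with $R(Q) = N^{-1}(D(Q\|\Pi_N) + Q[D(P_{\theta_0}^{(N)}\|P_\theta^{(N)})])$. Theorem \ref{boundgam} supplies an explicit element $Q_N\in\mathcal{Q}_E$ for which $R(Q_N) \lesssim \varepsilon_N^2\log N$; hence $\gamma_N^2 \lesssim \varepsilon_N^2\log N$. The key point is that the family $\mathcal{Q}_E$ appearing in Theorem \ref{finalthm} is literally the family in which $Q_N$ was constructed in Theorem \ref{boundgam}, so no extra work is required to check membership.

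The final step is an elementary calculation of exponents. Substituting $\gamma_N^2 \lesssim \varepsilon_N^2\log N$ into the display from the first step, and using $2 - \tfrac{2\mu}{\mu+1} = \tfrac{2}{\mu+1}$, gives
\begin{equation*}
\varepsilon_N^{\frac{2}{\mu+1}} + \varepsilon_N^2\log N\cdot\varepsilon_N^{-\frac{2\mu}{\mu+1}} \;=\; \varepsilon_N^{\frac{2}{\mu+1}}\bigl(1+\log N\bigr) \;\lesssim\; \varepsilon_N^{\frac{2}{\mu+1}}\log N,
\end{equation*}
which is the bound \eqref{prate}. The same arithmetic with $\mu$ replaced by $\mu+\nu$ yields \eqref{rate}.

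Honestly, no step here is a genuine obstacle: the proof is a one-paragraph assembly, and the real content is already packaged in Theorems \ref{mainthm} and \ref{boundgam}. The only thing to watch is the bookkeeping of exponents of $\varepsilon_N$ (so that the $\varepsilon_N^{-2\mu/(\mu+1)}$ factor and the $\varepsilon_N^{2}$ from $\gamma_N^2$ cancel correctly), and the verification that the variational family $\mathcal{Q}_E$ from \eqref{GMFinf} is the same as the one used to construct $Q_N$ in Theorem \ref{boundgam} — both of which are immediate from the definitions.
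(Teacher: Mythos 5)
Your proof is correct and is precisely the argument the paper intends: the text immediately preceding Theorem \ref{finalthm} states that it follows ``combining Theorems \ref{mainthm} and \ref{boundgam}'' via the bound $\gamma_N^2 \leq \inf_{Q\in\mathcal{Q}_E} R(Q) \lesssim \varepsilon_N^2\log N$, and your exponent arithmetic $2 - \tfrac{2\mu}{\mu+1} = \tfrac{2}{\mu+1}$ (and likewise with $\mu+\nu$) matches. Nothing further is needed.
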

    \par
    Our theory implies that convergence rates of the variational posteriors distributions for non-linear inverse problems can reach the same convergence rates of posterior distributions shown in \cite{IntroNonLinear_nickl2023bayesian,agapiou2024laplace} (with a logarithmic factor). As illustrated in Section 4 of \cite{agapiou2024laplace}, for the estimation of $\theta_0$ in Besov spaces $B_{pp}^{\alpha}$ with $p\in[1,2)$ in certain cases, the convergence rates obtained in \eqref{prate} and \eqref{rate} using Besov priors are better than the minmax rates using Gaussian priors.
    \begin{remark}
        The extra logarithmic factor arises in convergence rates
        due to the product form of the measures in $\mathcal{Q}_E$. {\color{black}For richer variational families, this logarithmic factor could be eliminated. For example, we define the variational family $\mathcal{Q}_E^b$ to be
\[ \mathcal{Q}_E^b =\set{Q=\bar{Q}|_A:  \bar{Q}\in \mathcal{Q}_E,\quad \mbox{any bounded set } A \mbox{ in } \mathcal{R} },\]
where $Q=\bar{Q}|_A$ means that for any measurable set $B$, $Q(B)= \bar{Q}(B\cap A)/\bar{Q}(A)$, that is, the restriction of the measure $\bar{Q}$ on a bounded set $A$. It has been proved that with this variational family, we can eliminate the $\log N$ factor when we estimate the upper bound of $\gamma_N^2$ (see Theorem 3.5 in \cite{zu2024consistencyvariationalbayesianinference} and its proof).} Besides, we note that (\ref{prate}) and (\ref{rate}) maintain their convergence rates when we extend our variational set from \textcolor{black}{$\mathcal{Q}_E$} to $\mathcal{Q}_{MF}$, which is defined by the push-forward of a standard mean-field family. The set $\mathcal{Q}_{MF}$ is explicitly given by
    \begin{equation*}
        \mathcal{Q}_{MF} = \Bigg\{Q = \tilde{Q}\circ\Psi^{-1} : d\tilde{Q}(\tilde{\theta}) = \prod_{l=-1}^{+\infty}\prod_{r\in R_l} d\tilde{Q}_{lr}(\tilde{\theta}_{lr}) \Bigg\},
    \end{equation*}
    which has been extensively utilized in various practical applications, as demonstrated in references \cite{IntroVI_zhang2018advances,IntroVI_blei2017variational,IntroVI_jia2021variational,Jia2023JMLR}.
\end{remark}
\subsection{Results for high-dimensional Besov priors} \label{High-dimensional Gaussian sieve priors}
Note that Theorem \ref{mainthm} is restricted to $p\in[1,2]$ because the decentering property Lemma \ref{lem:decenter} may not be established for the prior $\Pi'$ defined in Definition \ref{Def:BesovPrior} with $p>2$. To give consistency results similar to Theorem \ref{mainthm} for $\theta_0\in B_{pp}^{\alpha}$ with $p>2$, we consider high-dimensional Besov priors $\Pi_N$ which arise from the truncated Besov prior in Definition \ref{Def:TrunBesovPrior}.

The settings of high-dimensional Besov priors in this section follow those of rescaled Besov priors but here $\Pi_N$ is the truncated Besov prior in Definition \ref{Def:TrunBesovPrior}, and we repeat the settings here for clarity.
Let $\Pi'_J$ be the centred Besov probability measure $\Pi'_J$ defined in Definition \ref{Def:TrunBesovPrior}. For $\kappa$ in Condition \ref{con:condreg}, let $\Pi_N$ be the corresponding $B_{pp}^{\alpha}$-Besov prior from Definition \ref{Def:TrunBesovPrior}, with scaling constant
        \[\rho = (N\varepsilon_N^2)^{-\frac{1}{p}}, \qquad \varepsilon_N = N^{-\frac{\alpha+\kappa}{2\alpha+2\kappa+d}},\]
        where $N$ is the number of data points from the model \eqref{model}.
We define finite dimensional approximation of $\theta_0\in L^2_K(\mathcal{Z})$ to be
\begin{equation}
P_J(\theta_0) = \sum_{l=-1}^{J}\sum_{r\in R_l}\pdt{\theta_0}{\psi_{lr}}_{L^2(\mathcal{Z})}\psi_{lr}, \quad J \in \mathbb{N}.
\end{equation}
% For integer $\alpha > d/2$, we consider the prior $\Pi'_J$ is defined by the law of $\chi \upsilon$, where
% \begin{equation}\label{seiveprior1}
% \upsilon = \sum_{l=-1}^{J}\sum_{r\in R_l}2^{-l(\alpha+\frac{d}{2}-\frac{d}{p})}\xi_{lr}\psi_{lr}, \quad \xi \mathop{\sim}^{iid} Exp(p;0,1).
% \end{equation}
In order to make the $L^2$-error between $P_J(\theta_0)$ and $\theta_0$ dominated by the convergence rates of the variational posterior, it is necessary to let the truncation point $J = J_N$ diverge when $N\rightarrow+\infty$ (see Lemma \ref{0con}).
\par
In analogy to Theorem \ref{mainthm}, using high-dimensional Besov priors, we first derive a contraction rate involving the term $\gamma_N^2$ for $\theta_0\in B_{pp}^{\alpha}$ with $p\geq2$.
% 截断高斯先验的主要定理
\begin{theorem} \label{mainthmsv}
Consider $\theta_0\in B_{pp}^{\alpha}(\mathcal{Z})$ supported in the compact set $K$ for some $\alpha>\frac{d}{p}$ and $p\geq 2$.
Suppose Condition \ref{con:condreg} holds for the forward map $\mathcal{G}$, separable normed linear subspace $(\mathcal{R}, \Vert \cdot\Vert _{\mathcal{R}}) = (B_{pp}^{b}(\mathcal{Z}), \Vert \cdot\Vert _{B_{pp}^{b}(\mathcal{Z})})$ with some $b \leq \alpha$ and finite constants $C_U > 0$, $C_L > 0$ , $\kappa\geq0$, $\mu\geq 0$ and $l \geq 0$.
Let probability measure $\Pi'_J$ be as in Definition \ref{Def:TrunBesovPrior}, and $J = J_N \in \mathbb{N}$ is such that $2^{J} \simeq N^{\frac{1}{2\alpha +2{\kappa} +d}}$. 
Denote by $\Pi_N$ the rescaled prior as in (\ref{rescaledprior}) with $\theta' \sim \Pi'_J$, and $\Pi_N(\cdot|(Y_i,X_i)_{i=1}^N) = \Pi_N(\cdot|D_N)$ is the corresponding posterior distribution in (\ref{Post}) arising from data in model (\ref{model}). Assume that $\alpha + {\kappa} \geq \frac{d(l+1)}{p}$. Then, for $\varepsilon_N=N^{-\frac{\alpha+{\kappa}}{2\alpha+2{\kappa}+d}}$, $\gamma_N^2 = \frac{1}{N}\mathop{\inf}_{Q \in \mathcal{Q}}P_{\theta_0}^{(N)}D(Q\Vert \Pi_N(\cdot|D_N))$, and variational posterior $\hat{Q}$ defined in (\ref{variationalposterior}), we have
  \begin{equation}
      P_{\theta_0}^{(N)}\hat{Q}\Vert \mathcal{G}(\theta)-\mathcal{G}(\theta_0)\Vert _{L^{2}_{\lambda}}^{\frac{2}{\mu+1}}\lesssim \varepsilon_N^{\frac{2}{\mu+1}} +\gamma^2_N \cdot \varepsilon_N^{-\frac{2\mu}{\mu+1}}.
  \end{equation}
  Moreover, assume that Condition \ref{con:condstab} also holds for $\mathcal{G}$, $\mathcal{R}$, the function $F$, and the finite constants $C_T > 0$, $\nu\geq 0$. Then, we further have
  \begin{equation}
      P_{\theta_0}^{(N)}\hat{Q}[F(\Vert f_{\theta} - f_{\theta_0}\Vert )]^{\frac{2}{\mu+\nu+1}}\lesssim \varepsilon_N^{\frac{2}{\mu+\nu+1}} +\gamma^2_N \cdot \varepsilon_N^{-\frac{2\mu+2\nu}{\mu+\nu+1}}.
  \end{equation}
\end{theorem}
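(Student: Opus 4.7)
The plan is to mirror the proof strategy of Theorem \ref{mainthm}, but adapt each step to the finite-dimensional truncated prior $\Pi_N = \mathcal{L}(\rho\chi F_J)$ with $2^J \simeq N^{1/(2\alpha+2\kappa+d)}$. The overall framework relies on two ingredients: first, a standard ``prior mass and testing'' argument that delivers contraction of the true posterior $\Pi_N(\cdot|D_N)$ toward $\theta_0$ at rate $\varepsilon_N$ in the forward map distance $\|\mathcal{G}(\theta)-\mathcal{G}(\theta_0)\|_{L^2_\lambda}$; and second, the Donsker--Varadhan variational representation of the KL divergence, which lets us transfer this contraction to the variational posterior $\hat{Q}$ at the expense of the term $\gamma_N^2 \cdot \varepsilon_N^{-2\mu/(\mu+1)}$ coming from the variational approximation error. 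The interpolation between the $L^2_\lambda$-forward rate and the conditional stability (Condition \ref{con:condstab}) will produce the second displayed inequality in the same way as in Theorem \ref{mainthm}.

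The key technical steps I would carry out, in order, are: (i) a small-ball prior mass estimate, showing $\Pi_N(\|\theta-\theta_0\|_{(H^\kappa)^*}\leq \varepsilon_N) \gtrsim \exp(-C N\varepsilon_N^2)$, which requires approximating $\theta_0$ by $P_J(\theta_0)$ (using the Besov approximation estimate $\|\theta_0 - P_J(\theta_0)\|_{(H^\kappa)^*} \lesssim 2^{-J(\alpha+\kappa)} \lesssim \varepsilon_N$) together with the decentering Lemma \ref{lem:decenterJ} to shift the $p$-exponential mass around $P_J(\theta_0)$; (ii) construction of a sieve $\Theta_N \subset B_{B^b_{pp}}(M_N)$ with $M_N \simeq (N\varepsilon_N^2)^{1/p}$, carrying almost full prior mass and whose metric entropy in $(H^\kappa)^*$ satisfies the standard bound $\log N(\varepsilon_N, \Theta_N, \|\cdot\|_{(H^\kappa)^*}) \lesssim N\varepsilon_N^2$; this is here where truncation helps, since the support of $\Pi_N$ lives in a $\sim 2^{Jd}$-dimensional subspace; (iii) construction of likelihood-ratio tests against alternatives at $L^2_\lambda$-distance $\geq L\varepsilon_N$ using the polynomial Lipschitz bound (\ref{lip}) together with $\|\theta\|_{B^b_{pp}} \leq M_N$ for $\theta \in \Theta_N$, where $\alpha+\kappa \geq d(l+1)/p$ is required precisely to absorb the factor $M_N^{2l}$ into the exponential testing error; (iv) combining via the variational inequality to produce $P_{\theta_0}^{(N)}\hat{Q}(\|\mathcal{G}(\theta)-\mathcal{G}(\theta_0)\|_{L^2_\lambda}^{2} \wedge M_N^{2\mu}) \lesssim \varepsilon_N^2 + \gamma_N^2$, then using the uniform forward bound (\ref{bound}) on the sieve to replace the truncation $\wedge M_N^{2\mu}$ by a factor $M_N^{2\mu-2/(\mu+1)} \simeq \varepsilon_N^{-2\mu/(\mu+1)}$ after applying Jensen's inequality at exponent $1/(\mu+1)$.

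The main obstacle compared with Theorem \ref{mainthm} is the regime $p>2$: the global decentering property (Lemma \ref{lem:decenter}) fails, so the small-ball lower bound at step (i) cannot be obtained directly for the infinite wavelet series. Truncation is what rescues the argument, since Lemma \ref{lem:decenterJ} provides a decentering estimate for $\Pi'_J$ at the cost of an extra factor depending on $J$; one must then verify that with $2^J \simeq N^{1/(2\alpha+2\kappa+d)}$ this factor is absorbed in $\exp(-C N\varepsilon_N^2)$ and that the truncation bias $\|\theta_0 - P_J(\theta_0)\|_{(H^\kappa)^*}$ is of order $\varepsilon_N$, which is exactly where $b\leq\alpha$ in the choice of $\mathcal{R}$ and the regularity $\theta_0\in B^\alpha_{pp}$ enter.

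Once (i)--(iii) are in place, the variational-to-posterior transfer is formally identical to the $p\in[1,2]$ case, so in the final step I would reuse, almost verbatim, the Donsker--Varadhan and Jensen computations from Theorem \ref{mainthm}. Applying the conditional stability (Condition \ref{con:condstab}) inside the sieve (where $\|\theta\|_{\mathcal{R}}\leq M_N$ so $M_N^{\nu}$ contributes a factor $\varepsilon_N^{-2\nu/(\mu+\nu+1)}$ after interpolation) yields the second displayed inequality. No essentially new estimate beyond the $p>2$ versions of the prior properties is needed.
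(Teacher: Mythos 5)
Your high-level plan matches the paper's: verify conditions (C1)--(C3) of the prior-mass-and-testing framework of \cite{zhang2020convergence}, replace the decentering Lemma \ref{lem:decenter} by the truncated version Lemma \ref{lem:decenterJ} (absorbing its extra factor $(1/2)^{c2^{Jd}}\simeq e^{-c'N\varepsilon_N^2}$), use the truncation bias estimate Lemma \ref{0con} to shift the small-ball calculation to $P_J(\theta_0)$, and then invoke the variational transfer. Those ingredients are all correctly identified, and step (i) and the final paragraph agree with the paper in substance.

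However, there is a genuine gap in steps (ii)--(iii), and your closing claim that ``no essentially new estimate beyond the $p>2$ versions of the prior properties is needed'' is not correct. You propose a sieve $\Theta_N\subset B_{B^b_{pp}}(M_N)$ and assert ``the standard bound'' $\log N(\varepsilon_N,\Theta_N,\|\cdot\|_{(H^\kappa)^*})\lesssim N\varepsilon_N^2$, attributing this to truncation/finite-dimensionality. Neither mechanism works here. The two-level concentration (Lemma \ref{lem:twolevel}) forces a component $\theta_3$ of the sieve to lie in an $H^{\tilde\alpha}$-ball of radius $\simeq\sqrt{N}\varepsilon$ (with $\tilde\alpha = \alpha + d/2 - d/p$); for $p>2$ the Sobolev embedding $H^{\tilde\alpha}\hookrightarrow B^{\alpha}_{pp}$ puts this inside a $B^{\alpha}_{pp}$-ball of radius $\simeq\sqrt{N}\varepsilon$, much larger than the $(N\varepsilon^2)^{1/p}$-ball you take, and the resulting entropy $\left(\sqrt{N}/\varepsilon\cdot\varepsilon\right)^{d/(\alpha+\kappa)}=N^{d/(2(\alpha+\kappa))}$ strictly exceeds $N\varepsilon_N^2=N^{d/(2\alpha+2\kappa+d)}$. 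Dimension counting on the $\simeq 2^{Jd}$-dimensional support gives instead $\log N(\cdots)\lesssim 2^{Jd}\log(\cdot)\simeq N\varepsilon_N^2\log N$, introducing a log factor that does not fit (C1). The paper's actual fix, which you must reproduce, is to redefine $H_N(\varepsilon)$ with an $H^{\tilde\alpha}$-ball of radius $M\sqrt{N}\varepsilon/(\sqrt{N}\varepsilon_N)^{2/p}$, compute the entropy with the modified exponent $d/(\tilde\alpha+\kappa)$, and use the inverse Hölder estimate $\|\theta\|_{H^{\tilde\alpha}}\lesssim 2^{Jd(p-2)/(2p)}\|\theta\|_{B^\alpha_{pp}}$ on the $J$-truncated span to connect the Besov-ball concentration to the Sobolev-ball sieve. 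Without this $H^{\tilde\alpha}$-based modification the entropy step fails for $p>2$, so this is a substantive missing piece rather than a routine rerun of the $p\in[1,2]$ argument.
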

We note that the sample of high-dimensional priors $\Pi_N$ is parameterized by the coefficients of the basis, that is,
\[\left\{\tilde{\theta} = (\tilde{\theta}_{lr})\in \mathbb{R}^{d_J}: r \in R_l, l \in \{-1,0,\dots,J\}\right\},\]
where $d_J:= \sum_{l=-1}^{J}\abs{R_l}\simeq 2^{Jd}$.
Thus, it is possible to find a variational posterior in a variational set containing finite-dimensional probability measures.
Define a p-exponential mean-field family $\tilde{\mathcal{Q}}_E^J(q)$ for $q\geq 1$ as
\begin{equation}\label{OGMF}
\Bigg\{Q = \mathop{\bigotimes}_{l=-1}^J\mathop{\bigotimes}_{r\in R_l}Exp(q;\mu_{lr},\sigma_{lr}^2):\mu_{lr} \in \mathbb{R}, \sigma_{lr}^2 \geq 0\Bigg\}.
\end{equation}
The map $\Psi_J: \mathbb{R}^{d_J} \mapsto L^2(\mathcal{Z})$ is defined by
\begin{equation}\label{pushforwardmap}
\Psi_J(\tilde{\theta}) = \sum_{l=-1}^J\sum_{r\in R_l}\tilde{\theta}_{lr}\chi\psi_{lr},\quad \forall\, \tilde{\theta} = (\tilde{\theta}_{lr})\in \mathbb{R}^{d_J}.
\end{equation}
Thus, our variational set $\mathcal{Q}_E^J(q)$ is obtained by the push-forward of $\tilde{\mathcal{Q}}_E^J(q)$ via $\Psi_J$, i.e.,
\begin{equation}\label{GMF}
\mathcal{Q}_E^J(q) = \Bigg\{Q = \tilde{Q}\circ\Psi_J^{-1} : \tilde{Q} \in \tilde{\mathcal{Q}}_E^J(q)\Bigg\}.
\end{equation}
We can find a probability measure $Q\in \mathcal{Q}_E^J(q)$ with $q \geq p$ such that $R(Q) \lesssim \varepsilon_N^2\log N$, which gives the convergence rate in the following theorem.
\begin{theorem}\label{boundgamfinite}
Suppose the forward map $\mathcal{G}$ satisfies Condition \ref{con:condreg} and the Besov prior $\Pi_N$, the true parameter $\theta_0$ are as defined in Theorem \ref{mainthmsv}. Then, there exists a probability measure $Q_N \in \mathcal{Q}_E^J(q)$ such that
        \begin{equation*}
            R(Q_N) \lesssim \varepsilon_N^2\log N
        \end{equation*}
        for $\varepsilon_N=N^{-\frac{\alpha+{\kappa}}{2\alpha+2{\kappa}+d}}$.
\end{theorem}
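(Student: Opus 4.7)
The plan is to exhibit an explicit candidate $Q_N\in\mathcal{Q}_E^J(q)$ whose pushforward has mean equal to the finite-dimensional projection $P_J(\theta_0)$ and whose per-coordinate scales are a polynomially small fraction of the prior scales, and then to show that both pieces of
\begin{equation*}
R(Q_N)=\tfrac{1}{N}\bigl(D(Q_N\Vert\Pi_N)+Q_N[D(P^{(N)}_{\theta_0}\Vert P^{(N)}_\theta)]\bigr)
\end{equation*}
are of order $\varepsilon_N^2\log N$. In coefficient space the prior $\Pi_N$ pulls back to $\tilde\Pi_N=\bigotimes_{l\le J,\,r\in R_l}Exp(p;0,\tau_l)$ with $\tau_l:=\rho\cdot 2^{l(d/p-d/2-\alpha)}$ and $\rho=(N\varepsilon_N^2)^{-1/p}$. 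I take $\tilde Q_N:=\bigotimes_{l,r}Exp(q;\theta_{0,lr},\sigma_{lr})$, where $\theta_{0,lr}:=\langle\theta_0,\psi_{lr}\rangle_{L^2}$, so that $\Psi_J$ sends the mean of $\tilde Q_N$ to $P_J(\theta_0)$, and set $\sigma_{lr}:=\tau_l N^{-c}$ for a constant $c>0$ to be fixed.

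For the KL piece I would compute, in each coordinate, the divergence between $Exp(q;\mu,\sigma)$ and $Exp(p;0,\tau)$ by direct integration: the differential-entropy term contributes $\log(\tau/\sigma)$ up to an absolute constant, while the cross-entropy integral of $|x|^p/(p\tau^p)$ against $Exp(q;\mu,\sigma)$ is controlled by $|\mu|^p/\tau^p+(\sigma/\tau)^p\,\mathbb{E}|Y|^p$ for $Y\sim Exp(q;0,1)$. Jensen's inequality gives $\mathbb{E}|Y|^p\le(\mathbb{E}|Y|^q)^{p/q}=1$ because $q\ge p$, so summing over the $d_J\simeq N\varepsilon_N^2$ coordinates yields
\begin{equation*}
D(\tilde Q_N\Vert\tilde\Pi_N)\lesssim d_J(1+\log N)+d_J N^{-cp}+\rho^{-p}\sum_{l,r}2^{-l(d/p-d/2-\alpha)p}|\theta_{0,lr}|^p.
\end{equation*}
The last summand equals $N\varepsilon_N^2$ times the wavelet-characterised $B^\alpha_{pp}$-seminorm of $\theta_0$ raised to the $p$-th power, hence is $\lesssim N\varepsilon_N^2\Vert\theta_0\Vert^p_{B^\alpha_{pp}}$; since $d_J\simeq N\varepsilon_N^2$, dividing by $N$ yields $\tfrac{1}{N}D(\tilde Q_N\Vert\tilde\Pi_N)\lesssim \varepsilon_N^2\log N$.

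For the data term I would invoke $D(P^{(N)}_{\theta_0}\Vert P^{(N)}_\theta)=\tfrac{N}{2}\Vert\mathcal{G}(\theta_0)-\mathcal{G}(\theta)\Vert^2_{L^2_\lambda}$ and apply Condition~\ref{con:condreg} to reduce to bounding $Q_N\bigl[(1+\Vert\theta\Vert^l_{\mathcal R}\vee\Vert\theta_0\Vert^l_{\mathcal R})^2\Vert\theta-\theta_0\Vert^2_{(H^\kappa)^*}\bigr]$. The squared $(H^\kappa)^*$-distance splits into the deterministic truncation bias $\Vert P_J(\theta_0)-\theta_0\Vert_{H^{-\kappa}}^2\lesssim 2^{-2J(\alpha+\kappa)}\Vert\theta_0\Vert_{H^\alpha}^2\lesssim\varepsilon_N^2$, valid for $p\ge 2$ via the embedding $B^\alpha_{pp}\hookrightarrow H^\alpha$, plus the random fluctuation whose $Q_N$-expectation is a multiple of $\sum_{l,r}2^{-2l\kappa}\sigma_{lr}^2\lesssim \rho^2 N^{-2c}\sum_l 2^{l(2d/p-2\alpha-2\kappa)}$, which is $O(\varepsilon_N^2)$ since $\alpha+\kappa>d/p$. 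The radius factor is uniformly bounded under $Q_N$ because $b\le\alpha$ keeps $P_J(\theta_0)$ bounded in $B^b_{pp}$ and the $q$-exponential fluctuations have finite moments of all orders; combining via Cauchy--Schwarz yields $\tfrac{1}{N}Q_N[D(P^{(N)}_{\theta_0}\Vert P^{(N)}_\theta)]\lesssim\varepsilon_N^2$.

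The main technical obstacle is the per-coordinate KL calculation, specifically controlling the cross-entropy of an $Exp(q)$ density against an $Exp(p)$ density with $q\ge p$; this is precisely where Jensen's inequality is essential and explains the restriction $q\ge p$. A secondary difficulty is balancing the three sources of error in the KL bound---the dimension $d_J\simeq N\varepsilon_N^2$, the per-coordinate logarithmic penalty $\log(\tau_l/\sigma_{lr})$, and the Besov seminorm of $\theta_0$---which together produce the $\log N$ factor in the final rate.
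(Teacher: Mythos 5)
Your proposal is correct, and its overall architecture — write $R(Q_N)$ as a sum of a KL term and a data-fit term, pick a coordinatewise $q$-exponential product measure centred at the wavelet coefficients of $\theta_0$, bound the per-coordinate cross-entropy via the moment identity $\mathbb{E}|Y|^q=1$, and use Condition~\ref{con:condreg} together with Lemma~\ref{0con} to control the likelihood term — matches the paper's strategy. Where you diverge is in the concrete choice of variational scales and in how you close the data term. The paper's $\tilde Q_N$ uses a single level-independent scale $\tau\simeq 2^{-J(\alpha+\kappa+d/2)}$, so that $\tau\le\sigma_l$ at every level and the log term $\sum_{l,r}\log(\sigma_l/\tau)$ collects the $d_J\log N$ contribution; you instead take the level-dependent $\sigma_{lr}=\tau_l N^{-c}$, which makes the ratio $\tau_l/\sigma_{lr}=N^c$ constant across coordinates and gives the same $\lesssim\varepsilon_N^2\log N$ bound. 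For the likelihood term the paper trades $\norm{\theta}_{\mathcal R}$ for $\norm{\theta}_{B^{\beta'}_{\gamma\gamma}}$ with $\beta'<\alpha-d/p$, $\gamma$ large, and checks conditions \eqref{Eb1}--\eqref{Eb2} to control the cross-moments; you instead propose Cauchy--Schwarz, which works but requires verifying that $Q_N\norm{\theta}_{\mathcal R}^{4l}=O(1)$ — this is where the $N^{-c}$ shrinkage is essential (with $b\le\alpha$, $\mathbb{E}\norm{\text{fluctuations}}^p_{B^b_{pp}}\lesssim N^{-cp}\,2^{Jd}\,\rho^p\lesssim N^{-cp}$), and it would be worth spelling this out rather than appealing to "finite moments of all orders" alone. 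A minor imprecision: $\Psi_J$ sends the coefficient vector $(\theta_{0,lr})_{l\le J}$ to $\chi P_J(\theta_0)$, not to $P_J(\theta_0)$; since $\theta_0$ is supported in $K$ where $\chi\equiv1$, one has $\theta_0=\chi\theta_0=\sum_{l,r}\theta_{0,lr}\chi\psi_{lr}$, so the decomposition $\theta-\theta_0=\Psi_J(\tilde\theta-\tilde\theta_0)-\chi(\theta_0-P_J(\theta_0))$ still holds and your calculation goes through, but the statement should be phrased that way. One thing your write-up makes more transparent than the paper is \emph{why} the restriction $q\ge p$ appears: the Jensen step $\mathbb{E}|Y|^p\le(\mathbb{E}|Y|^q)^{p/q}=1$ needed to control the cross-entropy integral is exactly where it enters, and the paper states the per-coordinate bound without spelling this out.
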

\par
With Theorem \ref{boundgamfinite}, it is easy to bound $\gamma_N^2$ by
$\gamma_N^2\leq \inf_{Q\in \mathcal{Q}_E^J(q)}R(Q) \leq R(Q_N).$
Then, Theorems \ref{mainthmsv} and \ref{boundgamfinite} together imply the convergence rate of the variational posterior from $\mathcal{Q}_E^J(q)$.
\begin{theorem}\label{finalthmsv}
    Consider $\theta_0$, $K$, $\alpha$, $p$, $\Pi'_J$, $\Pi_N(\cdot|D_N)$, $\varepsilon_N$, $\gamma_N$ be as defined in Theorem \ref{mainthmsv}.
      Suppose Condition \ref{con:condreg} holds for the forward map $\mathcal{G}$, separable normed linear subspace $(\mathcal{R}, \Vert \cdot\Vert _{\mathcal{R}}) = (B_{pp}^{b}(\mathcal{Z}), \Vert \cdot\Vert _{B_{pp}^{b}(\mathcal{Z})})$ with some $b \leq \alpha$ and finite constants $C_U > 0$, $C_L > 0$ , $\kappa\geq0$, $\mu\geq 0$ and $l \geq 0$.
Assume that $\alpha + {\kappa} \geq \frac{d(l+1)}{p}$ and $\theta_0\in B_{pp}^{\alpha}(\mathcal{Z})$ supported in the compact set $K$. Then, for variational posterior $\hat{Q}$ defined in (\ref{variationalposterior}) with variational set $Q_E^J(q)$ in (\ref{GMF}), we have
  \begin{equation*}
      P_{\theta_0}^{(N)}\hat{Q}\Vert \mathcal{G}(\theta)-\mathcal{G}(\theta_0)\Vert _{L^{2}_{\lambda}}^{\frac{2}{\mu+1}}\lesssim \varepsilon_N^{\frac{2}{\mu+1}}\log N.
  \end{equation*}
  Moreover, assume that Condition \ref{con:condstab} also holds for $\mathcal{G}$, $\mathcal{R}$, the function $F$ and the finite constants $C_T > 0$, $\nu\geq 0$. Then, we further have
  \begin{equation*}
      P_{\theta_0}^{(N)}\hat{Q}[F(\Vert f_{\theta} - f_{\theta_0}\Vert )]^{\frac{2}{\mu+\nu+1}}\lesssim \varepsilon_N^{\frac{2}{\mu+\nu+1}}\log N.
  \end{equation*}
\end{theorem}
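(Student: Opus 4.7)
The plan is to obtain Theorem \ref{finalthmsv} as a direct composition of Theorem \ref{mainthmsv} and Theorem \ref{boundgamfinite}, with the only nontrivial bookkeeping being the arithmetic in the exponents of $\varepsilon_N$. Since the variational set in the statement is $\mathcal{Q}_E^J(q)$, I would first invoke the inequality $\gamma_N^2 \leq \inf_{Q\in\mathcal{Q}_E^J(q)} R(Q)$ recorded after Theorem \ref{mainthm} (coming from \cite{zhang2020convergence}). Then Theorem \ref{boundgamfinite} supplies a specific $Q_N \in \mathcal{Q}_E^J(q)$ with $R(Q_N) \lesssim \varepsilon_N^2 \log N$, so altogether $\gamma_N^2 \lesssim \varepsilon_N^2 \log N$.

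Next I would plug this bound into the two displayed inequalities of Theorem \ref{mainthmsv}. For the prediction-loss inequality, substituting yields
\begin{equation*}
P_{\theta_0}^{(N)}\hat{Q}\Vert \mathcal{G}(\theta)-\mathcal{G}(\theta_0)\Vert_{L^{2}_{\lambda}}^{\frac{2}{\mu+1}} \lesssim \varepsilon_N^{\frac{2}{\mu+1}} + \varepsilon_N^{2}\log N \cdot \varepsilon_N^{-\frac{2\mu}{\mu+1}},
\end{equation*}
and the straightforward exponent identity $2 - \frac{2\mu}{\mu+1} = \frac{2}{\mu+1}$ collapses the second summand into $\varepsilon_N^{\frac{2}{\mu+1}}\log N$, which dominates the first summand. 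This establishes the first claim. The second claim (under the additional stability condition) follows identically with $\mu$ replaced by $\mu + \nu$: one uses $2 - \frac{2(\mu+\nu)}{\mu+\nu+1} = \frac{2}{\mu+\nu+1}$, and the same substitution $\gamma_N^2 \lesssim \varepsilon_N^2 \log N$ from Theorem \ref{boundgamfinite}.

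I would verify before applying the above that the hypotheses of Theorem \ref{mainthmsv} and Theorem \ref{boundgamfinite} are indeed in force: the identifications of $\theta_0$, $K$, $\alpha$, $p\geq 2$, the truncated Besov prior $\Pi'_J$ with truncation level $2^J \simeq N^{1/(2\alpha+2\kappa+d)}$, the rescaling $\rho = (N\varepsilon_N^2)^{-1/p}$, Condition \ref{con:condreg} for $(\mathcal{R},\Vert\cdot\Vert_{\mathcal{R}}) = (B_{pp}^b, \Vert\cdot\Vert_{B_{pp}^b})$ with $b\leq\alpha$, the regularity inequality $\alpha+\kappa \geq d(l+1)/p$, and (for the second estimate) Condition \ref{con:condstab}, are exactly the hypotheses repeated in Theorem \ref{finalthmsv}; so both earlier theorems apply verbatim.

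I do not anticipate a real obstacle: the entire argument is a two-line glueing once the exponent cancellation is observed. The only mild point requiring care is that the measure $Q_N$ produced by Theorem \ref{boundgamfinite} must lie in the same variational class $\mathcal{Q}_E^J(q)$ that defines $\hat{Q}$, so that the infimum in $\gamma_N^2$ is indeed bounded by $R(Q_N)$; this is granted by the statement of Theorem \ref{boundgamfinite}, which explicitly places $Q_N$ in $\mathcal{Q}_E^J(q)$ for any $q\geq p$. With that observation the proof reduces to quoting the two previous theorems and performing the exponent arithmetic displayed above.
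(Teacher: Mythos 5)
Your proposal is correct and matches the paper's argument exactly: the paper also bounds $\gamma_N^2 \leq \inf_{Q\in\mathcal{Q}_E^J(q)}R(Q) \leq R(Q_N) \lesssim \varepsilon_N^2\log N$ via Theorem \ref{boundgamfinite} and then substitutes into the two bounds of Theorem \ref{mainthmsv}, with the same exponent cancellation $2-\tfrac{2\mu}{\mu+1}=\tfrac{2}{\mu+1}$ (and its $\mu+\nu$ analogue) finishing the job.
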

\par
Compared to Theorem \ref{finalthm},  we observe that Conditions \ref{con:condreg} and \ref{con:condstab} are required to hold on $B_{pp}^{b}$ for some $b\leq \alpha$ rather than a stronger constraint $b<\alpha - \frac{d}{p}$. This relaxation of requirements stems from the regularity-enhancing property of truncated priors, which increases the reconstruction rate of the truth $f_0$, e.g., improvement of the upper bound of $s$ in Theorem \ref{mainthmDarcy}. 
\begin{remark}
Although Theorem \ref{finalthmsv} mainly addresses the case \( p \geq 2 \), convergence rates under high-dimensional priors remain attainable for \( p \in [1, 2) \) (see the proof of Theorem \ref{finalthmsv}). This extension requires strengthening the regularity requirement for \( \theta_0 \), specifically imposing a more critical regularity constraint \( \alpha_0 \geq \alpha + d/p - d/2 \). 
Failure to meet this condition creates a discrepancy between the \( L^2 \)-approximation error of \( P_J(\theta_0) \) for \( \theta_0 \) and the convergence rates of the variational posterior, arising from the absence of square-integrability (see Lemma \ref{0con}).
\end{remark}

\section{Contraction rate for two typical inverse problems}\label{sec:ApplicationSection}
In this section, we apply Theorem \ref{finalthm} to the Darcy flow problem and the inverse potential problem for a subdiffusion equation. In order to verify Conditions \ref{con:condreg} and \ref{con:condstab} for these problems, we use link functions satisfying specific properties as in \cite{IntroNonLinear_nickl2020convergence}. {\color{black} We note that the following results on specific nonlinear inverse problems can be directly extended to the case where $p >2$ using Theorem \ref{finalthmsv} with the truncated Besov prior.}
\subsection{Darcy flow problem}\label{SubsectionDarcyFlow}
For a bounded smooth domain $\mathcal{X} \subset \mathbb{R}^d$ ($d\in \mathbb{N}$) and a given source function $g \in C^{\infty}(\mathcal{X})$, we consider solutions $u=u_f$ to the Dirichlet boundary problem
\begin{equation}\label{Darcy}
    \left\{\begin{aligned}
    &\nabla\cdot(f \nabla u) = g \quad \mbox{on } \mathcal{X},\\
    &u = 0 \quad \mbox{on } \partial\mathcal{X}.
    \end{aligned}\right.  
\end{equation}
In this subsection, we will identify $f$ from the observation of $u_f$.
Assume the parameter $f \in \mathcal{F}^{\alpha}_{p,K_{\min}}$ for some $\alpha > 1 + d/2$, $K_{\min} \in [0,1)$, where the parameter space $\mathcal{F} := \mathcal{F}^{\alpha}_{p,K_{\min}}$ is defined as
\begin{equation}
\begin{aligned}
    \mathcal{F}^{\alpha}_{p,K_{\min}} =  \bigg\{ &f\in B_{pp}^{\alpha}(\mathcal{X}): f > K_{\min} \text{\ on\ } \mathcal{X}, f= 1 \text{\ on\ } \partial \mathcal{X},  \\ 
     &\qquad\quad\left. \frac{\partial^j f}{\partial n^j} = 0 \text{\ on\ } \partial \mathcal{X} \text{\ for\ } j= 1,\dots,\alpha -1 \right\}.
\end{aligned}  
\end{equation}
Here, the forward map $G$ is defined by 
\begin{equation*}
G : \mathcal{F}^{\alpha}_{p,K_{\min}} \rightarrow L^2_{\lambda}(\mathcal{X}),\qquad f \mapsto u_f,
\end{equation*}
where the probability measure $\lambda$ is chosen as the uniform distribution on $\mathcal{X}$. 
To build re-parametrisation of $\mathcal{F}_{\alpha,K_{\min}}$, we introduce the approach of using regular link functions $\Phi$ as in \cite{IntroNonLinear_nickl2020convergence}.
Define a function $\Phi$ that satisfies the following properties:
\begin{description}[labelindent = 0pt, leftmargin =*]
    \item[(i)] For given $K_{\min}> 0$, $\Phi: \mathbb{R}\rightarrow (K_{\min},\infty)$ is a smooth, strictly increasing bijective function such that $\Phi(0)=1$ and $\Phi'>0$ on $\mathbb{R}$;
    \item[(ii)] All derivatives of $\Phi$ are bounded, i.e., $\sup_{x\in\mathbb{R}}\abs{\Phi^{(k)}(x)} < \infty$ for $k\geq 1.$
\end{description}
An example of such a link function is given in Example B.1 of the Supplementary Materials \cite{zu2024consistencyvariationalbayesianinference}.
We set \[\Theta^{\alpha}_{p,K_{\min}} : = \set{\theta = \Phi^{-1}\circ f: f\in \mathcal{F}^{\alpha}_{p,K_{\min}} }.\] The reparametrized forward map $\mathcal{G}$ is then defined as
\begin{equation}\label{forwardmapDarcyflow}
\mathcal{G} : \Theta^{\alpha}_{p,K_{\min}} \rightarrow L^2_{\lambda}(\mathcal{X}), \qquad \theta \mapsto \mathcal{G}(\theta):=G(\Phi(\theta)).
\end{equation}
It can be verified through the properties of $\Phi$ (see Section 6 of \cite{IntroNonLinear_nickl2020convergence}) that
\begin{equation*}
\Theta^{\alpha}_{p,K_{\min}} = \bigg\{\theta \in B_{pp}^{\alpha}(\mathcal{X}): \frac{\partial^j \theta}{\partial n^j} = 0 \text{\ on\ } \partial \mathcal{X} \text{\ for\ } j=0,\dots,\alpha-1\bigg\}.
\end{equation*}
The reason why we use the link function $\Phi$ instead of the common choice $\Phi = \exp$ is that Conditions \ref{con:condreg} and \ref{con:condstab} require the polynomial growth in $\norm{\theta}_{\mathcal{R}}$ of those constants. If we use $\Phi = \exp$ as the link function, the polynomial growth is not satisfied.
\begin{theorem}\label{mainthmDarcy}
Let $d \in \mathbb{N}$, $p\in[1,2]$, $\alpha > (2 + 2d/p)\vee(4d/p-1)$ and ${\kappa}=1$. Consider the forward map $\mathcal{G}$ as in (\ref{forwardmapDarcyflow}). Let $\Pi'$, $\Pi_N(\cdot|D_N)$, $\mathcal{Q}_E$ and $\hat{Q}$ be as defined in Theorem \ref{finalthm}. Assume that $\theta_0\in B_{pp}^\alpha(\mathcal{X})$ is compactly supported on $K$. Then, for $\varepsilon_N=N^{-\frac{\alpha+{\kappa}}{2\alpha+2{\kappa}+d}}$, any $s$ such that $2<s-d/2<\alpha-2d/p$, we have
\begin{gather}\label{DracyGeneralerror}
    P_{\theta_0}^{(N)}\hat{Q} \Vert u_{f_{\theta}} - u_{f_0}\Vert_{L^2}^{\frac{2}{\mu+1}}\lesssim \varepsilon_N^{\frac{2}{\mu+1}}\log N,\\
\label{Dracyerror}
    P_{\theta_0}^{(N)}\hat{Q} \Vert f_{\theta} - f_{0}\Vert_{L^2}^{\frac{s+1}{s-1}\cdot\frac{2}{\mu+\nu+1}}\lesssim \varepsilon_N^{\frac{2}{\mu+\nu+1}} \log N
\end{gather}
with any $t$ such that $2<t-d/2<\alpha-2d/p$, $\mu = t^3 + t^2$, $\nu = \frac{(2s^2+1)(s+1)}{s-1}$. 
\end{theorem}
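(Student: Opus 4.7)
The plan is to reduce Theorem \ref{mainthmDarcy} to Theorem \ref{finalthm} by verifying Conditions \ref{con:condreg} and \ref{con:condstab} for the reparametrised Darcy forward map $\mathcal{G}(\theta) = u_{\Phi(\theta)}$ on $\mathcal{R} = B_{pp}^{b}(\mathcal{X})$, with $\kappa = 1$ and $b$ chosen in $(d/p,\alpha - d/p)$ so that the Sobolev embedding $B_{pp}^{b} \hookrightarrow C^{b - d/p}$ is available. The technical work consists of standard elliptic PDE estimates combined with the bounded-derivative property of the link function $\Phi$ and interpolation on Sobolev scales, with all constants tracked polynomially in $M = \norm{\theta}_{\mathcal{R}}\vee\norm{\theta_0}_{\mathcal{R}}$.

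For Condition \ref{con:condreg}, the lower bound $\Phi(\theta) \geq K_{\min}$ together with the maximum principle yields a uniform $L^\infty$-bound on $u_{\Phi(\theta)}$ depending only on $g$ and $K_{\min}$, and the polynomial growth exponent $\mu = t^3 + t^2$ will appear via Schauder/Sobolev elliptic regularity when we need the stronger bound $\norm{u_{\Phi(\theta)}}_{H^{t+2}} \lesssim M^{\mu}$ for a $t$ satisfying $2 < t-d/2 < \alpha - 2d/p$. The Lipschitz estimate in the $(H^{1})^*$-norm is the standard Aubin--Nitsche duality applied to the difference PDE $\nabla \cdot (\Phi(\theta_1)\nabla(u_1-u_2)) = -\nabla \cdot ((\Phi(\theta_1)-\Phi(\theta_2))\nabla u_2)$: testing against the solution of the adjoint elliptic problem, followed by the mean-value bound $|\Phi(\theta_1)-\Phi(\theta_2)| \lesssim |\theta_1 - \theta_2|$ from property (ii) of $\Phi$ and a Moser-type bound on $\nabla u_2$, gives $\norm{\mathcal{G}(\theta_1)-\mathcal{G}(\theta_2)}_{L^2_\lambda} \lesssim (1 + M^l)\norm{\theta_1-\theta_2}_{(H^1)^*}$. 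The constraint $\alpha + 1 \geq d(l+1)/p$, which is what forces the lower bound $\alpha > (2 + 2d/p) \vee (4d/p-1)$, is exactly what Theorem \ref{finalthm} needs.

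For Condition \ref{con:condstab}, I would start from the identity $\nabla\cdot((f-f_0)\nabla u_0) = -\nabla\cdot(f\nabla(u-u_0))$ and, following the strategy from Section 5 of \cite{IntroNonLinear_nickl2020convergence}, express $\norm{f-f_0}_{L^2}$ in terms of higher Sobolev norms of $u-u_0$ via an integration-by-parts against $\nabla u_0 / |\nabla u_0|^2$ (or a regularised variant), producing a bound of the form $\norm{f-f_0}_{L^2} \lesssim M^{\nu_1} \norm{u-u_0}_{H^{s+1}}$. Multiplicative interpolation $\norm{u-u_0}_{H^{s+1}} \lesssim \norm{u-u_0}_{H^{s+2}}^{\lambda}\norm{u-u_0}_{L^2}^{1-\lambda}$ with $\lambda = (s+1)/(s+2)$, combined with the elliptic a priori bound $\norm{u-u_0}_{H^{s+2}}\lesssim M^{\nu_2}$ valid for $2 < s-d/2 < \alpha - 2d/p$, then rearranges into the Hölder form $\norm{f-f_0}_{L^2}^{(s+1)/(s-1)} \leq C M^{\nu}\norm{\mathcal{G}(\theta)-\mathcal{G}(\theta_0)}_{L^2_\lambda}$ with $F(x) = x^{(s+1)/(s-1)}$ and $\nu = (2s^2+1)(s+1)/(s-1)$ after collecting all $M$-exponents.

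With both conditions verified, Theorem \ref{finalthm} applied with $\kappa = 1$ and the above $\mu$, $\nu$, $F$ delivers \eqref{DracyGeneralerror} and \eqref{Dracyerror} verbatim. The main obstacle is the bookkeeping in step 2: making the interpolation indices line up so that the polynomial factor collapses to exactly $M^{(2s^2+1)(s+1)/(s-1)}$ and the exponent on $\norm{f-f_0}_{L^2}$ is exactly $(s+1)/(s-1)$. A secondary difficulty is ensuring that the weight $1/|\nabla u_0|^2$ used in the stability argument is controlled; this is where the constraint $2 < s-d/2$ enters, via a Sobolev embedding guaranteeing enough pointwise regularity of $\nabla u_0$ to make the weighted integrals absolutely convergent uniformly on bounded subsets of $\mathcal{R}$.
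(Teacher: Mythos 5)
Your overall architecture is exactly the paper's: verify Conditions \ref{con:condreg} and \ref{con:condstab} for $\mathcal{G}(\theta)=u_{\Phi(\theta)}$ on $\mathcal{R}=B^b_{pp}$ with $b<\alpha-d/p$, identify $\kappa=1$, $l=3$, $\mu=t^3+t^2$, $\nu=(2s^2+1)(s+1)/(s-1)$, check $\alpha+\kappa\geq d(l+1)/p=4d/p$, and invoke Theorem \ref{finalthm}. The paper does nothing more than this reduction: it imports the three PDE estimates wholesale as Lemmas B.2--B.4 of \cite{zu2024consistencyvariationalbayesianinference} (themselves descendants of \cite{IntroNonLinear_nickl2020convergence,IntroNonLinear_nickl2023bayesian}), so the regularity and Lipschitz parts of your sketch are consistent with what is actually cited. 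Your remark that the maximum principle already gives a sup-norm bound independent of $M$ is correct and harmless, since any larger $\mu$ still satisfies \eqref{bound}.

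The genuine gap is in your verification of Condition \ref{con:condstab}. First, testing against $\nabla u_0/|\nabla u_0|^2$ cannot work as stated: $\nabla u_{f_0}$ vanishes at interior critical points of $u_{f_0}$ (it must, since $u_{f_0}=0$ on $\partial\mathcal{X}$), so the weight is not integrable there, and no regularisation rescues a pointwise division. The stability estimates in the cited sources instead exploit the transport operator $w\mapsto \nabla w\cdot\nabla u_0 + w\,\Delta u_0$ together with a sign condition on the source $g$ and a maximum-principle argument; the constraint $2<s-d/2$ has nothing to do with controlling $1/|\nabla u_0|^2$ --- it is the Sobolev-embedding threshold guaranteeing $C^2$-regularity of the coefficients so that the elliptic a priori bounds apply. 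Second, your interpolation scheme produces the wrong Hölder exponent: interpolating $H^{s+1}$ between $L^2$ and $H^{s+2}$ gives $\|u-u_0\|_{L^2}$ to the power $1/(s+2)$, hence $\|f-f_0\|_{L^2}^{s+2}\lesssim M^{\nu}\|u-u_0\|_{L^2}$, not the exponent $(s+1)/(s-1)$ in \eqref{Dracyerror}. The correct bookkeeping, which is what Lemma B.4 of \cite{zu2024consistencyvariationalbayesianinference} encodes, starts from the base stability bound $\|f-f_0\|_{L^2}\lesssim M^{a}\|u_f-u_{f_0}\|_{H^2}$ and interpolates $H^2$ between $L^2$ and $H^{s+1}$ with weights $(s-1)/(s+1)$ and $2/(s+1)$, using the a priori bound $\|u_f-u_{f_0}\|_{H^{s+1}}\lesssim M^{t^3+t^2}$; raising to the power $(s+1)/(s-1)$ then yields $F(x)=x^{(s+1)/(s-1)}$ with the stated $\nu$. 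As written, your step 2 would not deliver the exponents appearing in the theorem.
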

\par
We note that for the ``PDE-constrained regression'' problem of recovering $u_{f_0}$ in ``prediction'' loss, the convergence rate
obtained in \eqref{DracyGeneralerror} can be shown to be minimax optimal (up to a logarithmic factor) \cite[Section 2.3.2]{IntroNonLinear_giordano2020consistency}. For a smooth truth $f_0$, both of the rates obtained in \eqref{DracyGeneralerror} and \eqref{Dracyerror} approach the optimal rate $N^{-1/2}$ of finite-dimensional models as $\alpha \rightarrow +\infty$. In particular, our variational reconstruction rate of $f_0$ matches the true posterior contraction rates established in \cite{IntroNonLinear_nickl2023bayesian} for Gaussian process priors. However, the optimal reconstruction rate for the Darcy flow problem with general Besov regularity of the truth $f_0$ remains to be studied for future research.
\begin{remark}
For the inverse potential problem of the Schrödinger equations discussed in \cite{IntroNonLinear_nickl2020bernstein}, it is worth noting that the convergence rate of the variational posterior can also be obtained using the link function detailed in Section \ref{SubsectionSuddiffuion}. when $p=2$, the convergence rate towards the truth $f_0$ derived from Theorem \ref{finalthmsv} reaches the same rate $N^{-\frac{\alpha}{2\alpha + 4 + d}}$ (up to a logarithmic factor) as that proved to be minimax optimal in \cite{IntroNonLinear_nickl2020bernstein}. Since our results for Schr$\ddot{\mathrm{o}}$dinger equations can be obtained directly through a process similar to that used for the Darcy flow problem, using regularity and conditional stability estimates from \cite{IntroNonLinear_nickl2023bayesian}, we will not provide theorems and proofs here. Instead, we present our results on the inverse potential problem for a subdiffusion equation in Section \ref{SubsectionSuddiffuion}.
\par
\end{remark}
\subsection{Inverse potential problem for a subdiffusion equation}\label{SubsectionSuddiffuion}
Let domain $\Omega = (0,1)$ and we consider solutions $u(t)=u_{\beta,q}(t)$ to a subdiffusion equation with a non-zero Dirichlet boundary condition:
\begin{equation}\label{Fractional}
    \left\{\begin{aligned}
    &\partial_t^{\beta}u - \partial_{xx}u + qu = f \quad \mbox{in } \Omega \times (0,T],\\
    &u(0,t) = a_0, u(1,t) = a_1 \quad \mbox{on } (0,T],\\
    &u(0) = u_0 \quad \mbox{in } \Omega,
    \end{aligned}\right.  
\end{equation}
where $\beta \in (0,1)$ represents the fractional order, $T > 0$ stands for a fixed final time, $f>0$ is a specified source term, $u_0>0$ denotes given initial data, the non-negative function $q\in L^{\infty}(\Omega)$ refers to a spatially dependent potential, and $a_0$ and $a_1$ are positive constants. The notation $\partial^\beta_tu(t)$ denotes the Djrbashian--Caputo fractional derivative in time $t$ of order $\beta\in (0,1)$, 
\begin{equation}
\partial_t^{\beta}u(t) = \frac{1}{\Gamma(1-\beta)}\int^t_0(t-s)^{-\beta}u'(s)ds,
\end{equation}
where $\Gamma(x)$ is the Gamma function. 
For in-depth analysis of fractional differential equations and the Djrbashian-Caputo fractional derivative, please refer to references \cite{jin2021fractional,Jia2017JDE,Jia2018IPI,Jia2018JFA}.
In this section we consider the identification of the potential $q$ from the observation of $u(T)$. 
\par
For $\alpha \in \mathbb{N}$, we define the parameter space
\begin{equation}\label{FracParameterspcae}
\mathcal{F}^{\alpha}_{p,M_0} =\left\{ q\in B_{pp}^{\alpha} \cap \mathcal{I}: q\vert_{\partial\Omega}=1, \frac{\partial^jq}{\partial n^j}\Big|_{\partial\Omega} =0 \text{\ for\ } j=1,\dots,\alpha -1 \right\},
\end{equation}
where $\mathcal{I} = \{q \in L^{\infty}: 0< q < M_0\}$ for $M_0 >1$, and its subclasses
\[\mathcal{F}^{\alpha}_{p,M_0}(R) =\left\{ q\in \mathcal{F}_{p,M_0}^{\alpha}: \norm{q}_{B^{\alpha}_{pp}}\leq R\right\}, \quad R > 0.\]
We assume $u_0 \in B^{\alpha}_{pp}(\Omega)$, $f \in B^{\alpha}_{pp}(\Omega)$ with $u_0, f \geq L_0$ a.e. and $a_0, a_1 \geq L_0$ for $L_0>0$.
Here the forward map $G$ is defined by 
\begin{equation*}
G : \mathcal{F}^{\alpha}_{p,M_0} \rightarrow L^2_{\lambda}(\Omega),\qquad q \mapsto u_q(T),
\end{equation*}
where probability measure $\lambda$ is chosen as the uniform distribution on $\Omega$.
We use a link function $\Phi$ here to construct a reparametrization of $\mathcal{F}_{\alpha,M_0}$.
Define a link function $\Phi$ that satisfies the following properties:
\begin{description}[labelindent = 0pt, leftmargin =*]
    \item[(i)] For given $M_0> 1$, $\Phi: \mathbb{R}\rightarrow (0,M_0)$ is a smooth, strictly increasing bijective function such that $\Phi(0)=1$ and $\Phi'>0$ on $\mathbb{R}$;
    \item[(ii)] All derivatives of $\Phi$ are bounded, i.e.,
        $\sup_{x\in\mathbb{R}}\abs{\Phi^{(k)}(x)} < \infty,$ for $k\geq 1.$
\end{description}
One example to satisfy (i) and (ii) is the logistic function \cite{furuya2024consistency}:
\[\Phi(t) = \frac{M_0}{M_0+(M_0-1)(e^{-t}-1)}.\]
We set $\Theta^{\alpha}_{p,M_0} : = \left\{\theta = \Phi^{-1}\circ q: q\in \mathcal{F}^{\alpha}_{p,M_0} \right\}$. The reparametrized forward map $\mathcal{G}$ is then defined as
\begin{equation}\label{forwardmapFrac}
\mathcal{G} : \Theta^{\alpha}_{p,M_0} \rightarrow L^2_{\lambda}(\Omega), \qquad \theta \mapsto \mathcal{G}(\theta):=G(\Phi(\theta)).
\end{equation}
It is verified through the properties of $\Phi$ that
\begin{equation*}
\Theta^{\alpha}_{p,M_0} = \left\{\theta \in B_{pp}^{\alpha}: \frac{\partial^j \theta}{\partial n^j} = 0 \text{\ on\ } \partial \Omega \text{\ for\ } j=0,\dots,\alpha-1\right\}.
\end{equation*}
\par
\begin{theorem}\label{mainthmFrac}
Let $d = 1$, $p \in [1,2]$, $\alpha > 2+2d/p$ and ${\kappa}= 2$. Consider the forward map $\mathcal{G}$ as in (\ref{forwardmapFrac}) with terminal time $T\geq T_0$ where $T_0$ is large enough. Let $\Pi'$, $\Pi_N(\cdot|D_N)$, $\mathcal{Q}_E$ and $\hat{Q}$ be as defined in Theorem \ref{finalthm}. Assume that $\theta_0\in B_{pp}^\alpha(\Omega)$ is compactly supported on $K$. Then, for $\varepsilon_N=N^{-\frac{\alpha+{\kappa}}{2\alpha+2{\kappa}+d}}$, any integer $s$ such that $0\leq s<\alpha+d/2-2d/p$, we have
\begin{gather}\label{FracGeneralerror}
   P_{\theta_0}^{(N)}\hat{Q} \Vert u_{q_{\theta}}(T) - u_{q_{0}}(T)\Vert_{L^2}^{\frac{2}{\mu+1}}\lesssim \varepsilon_N^{\frac{2}{\mu+1}}\log N, \\
\label{Fracerror}
    P_{\theta_0}^{(N)}\hat{Q} \Vert q_{\theta} - q_{0}\Vert_{L^2}^{\frac{2+s}{s}\cdot\frac{2}{\mu+\nu+1}}\lesssim \varepsilon_N^{\frac{2}{\mu+\nu+1}}\log N
\end{gather}
with $\mu = 0$, $\nu = 2+4s$.
\end{theorem}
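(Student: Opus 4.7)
The plan is to deduce Theorem \ref{mainthmFrac} from Theorem \ref{finalthm} by verifying Conditions \ref{con:condreg} and \ref{con:condstab} for the reparametrised forward map $\mathcal{G}$ in (\ref{forwardmapFrac}) with the quantitative constants $\kappa = 2$, $\mu = 0$, $\nu = 2 + 4s$, and stability profile $F(r) = r^{(2+s)/s}$. Given these, the prediction bound (\ref{FracGeneralerror}) falls out of the first half of Theorem \ref{finalthm}, while the reconstruction bound (\ref{Fracerror}) follows from the second half after raising the stability inequality to the appropriate power. The admissibility hypothesis $\alpha + \kappa \geq d(l+1)/p$ will be checked against the assumption $\alpha > 2 + 2d/p$ at the end, for the particular $l$ produced below.

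\textbf{Verifying Condition \ref{con:condreg}.} Because $q_\theta = \Phi(\theta)$ takes values in $(0, M_0)$ for every $\theta$, the weak maximum principle and standard energy estimates for (\ref{Fractional}) (e.g.\ from \cite{jin2021fractional}) yield an $L^\infty$-bound on $u_{q_\theta}(T)$ depending only on $u_0$, $f$, $a_0$, $a_1$ and $M_0$, so (\ref{bound}) holds with $\mu = 0$. For the Lipschitz estimate I would set $w = u_{q_{\theta_1}} - u_{q_{\theta_2}}$, which satisfies
\[
\partial_t^\beta w - \partial_{xx} w + q_{\theta_1} w = (q_{\theta_2} - q_{\theta_1}) u_{q_{\theta_2}}, \qquad w(0) = 0, \quad w\big|_{\partial\Omega} = 0.
\]
Combining the $L^\infty$-bound on $u_{q_{\theta_2}}$ and the multiplicative Besov inequalities (\ref{Sobolevinter1})--(\ref{Sobolevinter2}) with the smoothing property of the subdiffusion solution operator at time $T$ (duality against the elliptic part $-\partial_{xx} + q_{\theta_1}$ gives a two-derivative gain) produces
\[
\|w(T)\|_{L^2} \lesssim \bigl(1 + \|\theta_1\|_{\mathcal{R}}^l \vee \|\theta_2\|_{\mathcal{R}}^l\bigr)\, \|\theta_1 - \theta_2\|_{(H^2)^*},
\]
where $l$ depends only on the degree of the polynomial arising from $\Phi$ and the Besov multiplication bound, and the $(H^2)^*$ norm (rather than $L^2$) is what delivers the exponent $\kappa = 2$.

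\textbf{Verifying Condition \ref{con:condstab}.} For terminal time $T \geq T_0$ sufficiently large, the solution $u_{q_\theta}(T)$ approaches the stationary profile of the elliptic operator $-\partial_{xx} + q_\theta$ driven by $f$ with boundary data $a_0, a_1$, which is bounded below by a positive constant depending on $L_0$. This stationary regime is exactly the setting in which conditional stability estimates for the one-dimensional inverse potential problem are available, yielding
\[
\|q_\theta - q_0\|_{(H^s)^*} \lesssim \|u_{q_\theta}(T) - u_{q_0}(T)\|_{L^2}
\]
uniformly for $\theta, \theta_0 \in B_{\mathcal{R}}(M)$, with a constant polynomial in $M$. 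Interpolating this negative-order bound against the Besov/Sobolev regularity $\|q_\theta - q_0\|_{H^s} \lesssim M^{2s+1}$ (available since $\mathcal{R} = B_{pp}^b \hookrightarrow H^s$ for $s < \alpha + d/2 - 2d/p$) through the standard $s+2$-level interpolation inequality converts this into the form
\[
\|q_\theta - q_0\|_{L^2}^{(s+2)/s} \lesssim M^{2+4s}\,\|u_{q_\theta}(T) - u_{q_0}(T)\|_{L^2},
\]
which is precisely Condition \ref{con:condstab} with $F(r) = r^{(s+2)/s}$ and $\nu = 2 + 4s$.

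\textbf{Assembly and expected difficulty.} Once both conditions are in hand, a direct application of Theorem \ref{finalthm} with $\mu = 0$ gives the prediction rate (\ref{FracGeneralerror}), while the stability-induced power of $\|q_\theta - q_0\|_{L^2}$ on the left-hand side of Theorem \ref{finalthm} yields (\ref{Fracerror}) after the exponent bookkeeping. I expect the main obstacle to be the two PDE-analytic inputs made uniform over the Besov ball $B_{\mathcal{R}}(M)$: namely (a) the $(H^2)^*$-to-$L^2$ smoothing Lipschitz bound with an explicit polynomial dependence on $\|\theta\|_{\mathcal{R}}$, which requires Mittag--Leffler/resolvent estimates adapted to a variable, merely $L^\infty$ potential, and (b) the large-$T$ conditional stability estimate, whose polynomial dependence $M^{\nu}$ must be tracked carefully because fractional-in-time equations lack the exponential relaxation used in the classical parabolic case; the hypothesis $T \geq T_0$ is exactly what tames this and keeps the stationary limit meaningful.
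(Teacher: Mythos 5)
Your proposal follows essentially the same route as the paper: verify Conditions~\ref{con:condreg} and~\ref{con:condstab} for the reparametrised forward map~\eqref{forwardmapFrac} with $\kappa = 2$, $\mu = 0$, $F(r) = r^{(2+s)/s}$, $\nu = 2+4s$, then invoke Theorem~\ref{finalthm}. The difference is that the paper does not re-derive the PDE-analytic estimates; it cites Lemmas~B.7--B.9 of the companion work \cite{zu2024consistencyvariationalbayesianinference} (together with Lemma~29 of \cite{IntroNonLinear_nickl2020convergence}) for the uniform $H^2$ bound on $u_{q_\theta}(T)$, the $(H^2_0)^*$ Lipschitz estimate, and the conditional stability bound, whereas you sketch each of these from the difference equation, the smoothing of the fractional solution operator, and a negative-order stability estimate plus interpolation. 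The sketches are plausible but are exactly the technical work the cited lemmas encapsulate, so as written your argument still depends on filling in uniform-in-$M$ quantitative versions of (a) and (b).

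One concrete gap: you promise to check the admissibility hypothesis $\alpha + \kappa \geq d(l+1)/p$ ``for the particular $l$ produced below,'' but your Lipschitz step only says $l$ ``depends on the degree of the polynomial arising from $\Phi$'' and never fixes a value, so the check is left open. The paper's Lemma~B.7 gives $l = 2$, and then with $d = 1$, $\kappa = 2$, $p \in [1,2]$, and $\alpha > 2 + 2/p$ one has $\alpha + \kappa > 4 + 2/p \geq 3 \geq 3/p = d(l+1)/p$, which closes the condition. You need to supply that concrete $l$ (or an upper bound for it) before the final appeal to Theorem~\ref{finalthm} is justified.
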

\par
It is observed that for the ``PDE-constrained regression'' problem of recovering $u_{q_{0}}(T)$ in ``prediction'' loss, the convergence rate found in \eqref{FracGeneralerror} will be demonstrated to be minimax optimal (up to a logarithmic factor), as evidenced by Theorem \ref{Fracminmax} given below. The reconstruction rate of {\color{black}$q_0$} obtained in \eqref{Fracerror} increases with $p$ because $s/(2+s)$ can have a higher value. However, the optimal reconstruction rate with general Besov regularity of the truth {\color{black}$q_0$} remains to be studied for future research. For a smooth truth $q_0$, the rates obtained in \eqref{FracGeneralerror} and \eqref{Fracerror} both approach the optimal rate $N^{-1/2}$ of finite-dimensional models as $\alpha \rightarrow +\infty$.
\begin{remark}
    The convergence rates with high-dimensional priors can also be obtained for this inverse problem using Theorem \ref{finalthmsv}, which improves the rates of recovering {\color{black}$q_0$}, since the support of truncated priors has higher regularity. The convergence rates of recovering {\color{black}$q_0$} with high-dimensional priors can achieve the minimax optimal rate when $p=2$ (see Theorem 4.4 in \cite{zu2024consistencyvariationalbayesianinference}).
\end{remark}
\begin{theorem}\label{Fracminmax}
For $M_0>1$, $p\in[1,2]$, $\alpha \in \mathbb{N}$, $q\in \mathcal{F}^{\alpha}_{p,M_0}$ , consider the solution $u_q(t)$ of the problem \eqref{Fractional}. Then there exist fixed $T_0>0$ and a finite constant $C>0$ such that for $N$ large enough, the terminal time $T\geq T_0$ and any $\eta>0$,
    \begin{equation*}
         \inf_{\tilde{u}_N}\sup_{q_0\in\mathcal{F}^{\alpha}_{p,M_0}(R)}P^{(N)}_{\theta_0}\hat{Q}\norm{\tilde{u}_N-u_{q_0}(T)}^{\eta}_{L^2(\Omega)}\geq C N^{-\frac{\alpha+2}{2\alpha+4+1}\cdot\eta},
     \end{equation*}
      where $\theta_0 = \Phi^{-1}(q_0)$ and the infimum ranges over all measurable functions $\Tilde{u}_N = \Tilde{u}_N(\theta)$ that take value in $L^2(\Omega)$ with $\theta$ from the variational posterior $\hat{Q}$ defined in Theorem \ref{mainthmFrac}. 
\end{theorem}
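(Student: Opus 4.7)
The plan is to prove the minimax lower bound by reducing to the classical problem of estimating a regression function from Gaussian observations, and invoking Fano's inequality on a wavelet-based hypercube, exploiting the order-$\kappa=2$ smoothing of the forward map $\mathcal{G}$ together with its conditional coercivity at large terminal time. To begin, I would reduce the randomized statement to a deterministic one: for any measurable $\tilde u_N=\tilde u_N(\theta)$ with $\theta\sim\hat Q$, the estimator $\bar u_N:=\int \tilde u_N(\theta)\,d\hat Q(\theta)$ is a function of the data only; Jensen's inequality applied to $\|\cdot\|_{L^2}^{\eta}$ (convex for $\eta\ge 1$; the case $\eta<1$ can be absorbed into the constant at the cost of a worse $C$) shows its risk is no larger than that of $\tilde u_N$. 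Hence it suffices to produce a lower bound over all estimators measurable in $D_N$.

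Next I would construct the hypotheses. Set $q^*\equiv 1$, pick a fixed compact set $K\Subset\Omega$, and let $j_N$ be the resolution level with $2^{j_N}\asymp N^{1/(2\alpha+2\kappa+d)}=N^{1/(2\alpha+5)}$. Writing $R^\circ_{j_N}\subset R_{j_N}$ for the Daubechies wavelet indices whose supports lie in $K$, so that $|R^\circ_{j_N}|\asymp 2^{j_N}$, define
\begin{equation*}
q_\omega:=1+a_N\sum_{k\in R^\circ_{j_N}}\omega_k\,\psi_{j_N,k},\qquad \omega\in\{-1,+1\}^{|R^\circ_{j_N}|},
\end{equation*}
with $a_N=c_0\,2^{-j_N(\alpha+1/2)}$. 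By the wavelet characterization of Besov norms, $\|q_\omega-1\|_{B^\alpha_{pp}}\asymp a_N\,2^{j_N(\alpha+1/2)}\asymp c_0$, so for $c_0$ small enough all boundary, positivity, and norm constraints of $\mathcal F^\alpha_{p,M_0}(R)$ are satisfied (the interior support of the perturbation handles the boundary and normal-derivative vanishing conditions automatically). A standard Gilbert--Varshamov extraction then yields a set $\{\omega^{(1)},\dots,\omega^{(M)}\}$ with $\log M\gtrsim |R^\circ_{j_N}|\asymp 2^{j_N}$ and pairwise Hamming distance $\gtrsim 2^{j_N}$.

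The core analytic step is the matching two-sided estimate
\begin{equation*}
\|u_{q_{\omega^{(i)}}}(T)-u_{q_{\omega^{(j)}}}(T)\|_{L^2}\;\asymp\; 2^{-j_N\kappa}\,\|q_{\omega^{(i)}}-q_{\omega^{(j)}}\|_{L^2}\;\asymp\; a_N\,2^{-j_N(\kappa-1/2)}=\varepsilon_N.
\end{equation*}
The upper bound follows from the Lipschitz estimate of Condition \ref{con:condreg} (with $\kappa=2$) applied to $\|q_\omega-q_{\omega'}\|_{(H^\kappa)^*}$, which, on wavelets localized at scale $j_N$, scales like $2^{-j_N\kappa}\|q_\omega-q_{\omega'}\|_{L^2}$. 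The lower bound is obtained by linearizing $\mathcal G$ around $\theta^*=\Phi^{-1}(1)$: write $u_{q_\omega}(T)-u_{q^*}(T)=L[q_\omega-q^*]+r_\omega$ where the linearization $L$ is (modulo lower-order terms) $(-\partial_{xx}+q^*)^{-1}\circ M_{u_{q^*}(T)}$ once $T\ge T_0$ is large enough for the subdiffusion solution to stabilize. Spectral analysis of $-\partial_{xx}+q^*$ on $H^1_0$ together with $u_{q^*}(T)\ge c>0$ provides a lower coercivity bound matching the smoothing order on each dyadic wavelet block. The quadratic remainder $r_\omega$ is controlled by Condition \ref{con:condreg} at order $a_N^2\,2^{j_N d}\ll \varepsilon_N$ by the chosen $a_N$.

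Finally, the Gaussian KL identity gives $\mathrm{KL}(P^{(N)}_{\theta^{(i)}}\,\|\,P^{(N)}_{\theta^{(j)}})=\tfrac{N}{2}\|u_{q_{\omega^{(i)}}}(T)-u_{q_{\omega^{(j)}}}(T)\|^2_{L^2_\lambda}\lesssim N\varepsilon_N^2\asymp 2^{j_N}\asymp \log M$, so the Fano condition is satisfied. Fano's inequality then yields a universal constant lower bound on the probability of failing to identify $\omega^{(i)}$ at $L^2$-separation $\varepsilon_N/2$, and Markov's inequality converts this into $\sup_{q_0}P^{(N)}_{\theta_0}\|\bar u_N-u_{q_0}(T)\|^\eta_{L^2}\gtrsim \varepsilon_N^\eta=N^{-\eta(\alpha+\kappa)/(2\alpha+2\kappa+d)}$, completing the proof after the Jensen reduction. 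The main obstacle is the coercivity in the lower bound of the displayed equivalence: for elliptic problems this is standard, but for the fractional-time subdiffusion equation it forces the terminal-time hypothesis $T\ge T_0$, since only then does $u_{q^*}(T)$ stabilize enough to give a quantitative lower bound on the linearized operator.
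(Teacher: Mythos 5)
The paper's ``proof'' of this theorem is a single sentence citing Theorem~4.4 of the companion paper \cite{zu2024consistencyvariationalbayesianinference}, so there is no in-paper argument to compare against. Your reconstruction---a wavelet hypercube at scale $2^{j_N}\asymp N^{1/(2\alpha+5)}$ around $q^*\equiv 1$, a two-sided estimate $\|u_{q_\omega}(T)-u_{q_{\omega'}}(T)\|_{L^2}\asymp 2^{-j_N\kappa}\|q_\omega-q_{\omega'}\|_{L^2}$ with $\kappa=2$, Gilbert--Varshamov, and Fano---is the canonical route, and the arithmetic ($a_N 2^{-j_N(\kappa-1/2)}=\varepsilon_N$, $\log M\asymp N\varepsilon_N^2$, $\alpha>2$ so the remainder is subdominant) is consistent with the stated exponent $\frac{\alpha+2}{2\alpha+5}$.

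There is, however, a genuine gap in the reduction to deterministic estimators. Jensen applied to $\bar u_N:=\hat Q[\tilde u_N(\theta)]$ gives $\|\bar u_N - u_q\|^\eta\le \hat Q\|\tilde u_N(\theta)-u_q\|^\eta$ only when $x\mapsto\|x\|^\eta$ is convex, i.e.\ $\eta\ge 1$. For $\eta\in(0,1)$ the map is not convex and the inequality can reverse, so this is not a constant that can be absorbed; the Jensen step simply fails. The correct and cleaner route dispenses with the averaging altogether: by Markov,
\begin{equation*}
\hat Q\|\tilde u_N(\theta)-u_{q^{(i)}}\|_{L^2}^\eta \;\geq\; \delta^\eta\, \hat Q\bigl[\mathbf{1}_{\|\tilde u_N(\theta)-u_{q^{(i)}}\|_{L^2}>\delta}\bigr],
\end{equation*}
and the indicator is, on the $2\delta$-separated hypotheses, bounded below by the testing error $\mathbf{1}_{\phi(D_N,\theta)\neq i}$ of the decision rule $\phi(D_N,\theta)=\operatorname{argmin}_j\|\tilde u_N(\theta)-u_{q^{(j)}}\|_{L^2}$. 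Since $\theta$ is drawn from a channel $\hat Q_{D_N}$ that does not depend on the truth $q^{(i)}$, data processing shows Fano's bound on the average misclassification probability still applies, and the argument closes for every $\eta>0$. Separately, be aware that the lower half of the two-sided estimate (the coercivity of the linearization) is the real analytic content and is only sketched; in particular, Condition~\ref{con:condreg} is a first-order (Lipschitz) bound and cannot by itself control a quadratic remainder, so a second-order differentiability estimate for $q\mapsto u_q(T)$ and the large-$T$ stabilization $u_{q^*}(T)\ge c>0$ must be supplied. These are precisely what the cited Theorem~4.4 provides.
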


{\color{black}\section{Conclusion} 
\subsection{Summary of findings}
We establish convergence rates for variational posteriors in nonlinear inverse problems under Besov priors induced by $p$-exponential wavelet coefficients, thereby extending the Gaussian-prior framework to this broader class. Under suitable regularity and stability conditions on the forward map, the variational posterior contracts toward the true parameter at the same rate as the posterior. We also develop new technical tools for Besov priors to control prior mass near the truth. Furthermore, we propose a $p$-exponential mean-field variational family and construct a distribution achieving approximation error $\gamma_N^2 \lesssim \varepsilon_N^2 \log N$, such that the variational posterior inherits the posterior contraction rate. The theory is applied to the Darcy flow and subdiffusion inverse problems, yielding minimax-optimal rates for the ``PDE-constrained regression problems''.

\subsection{Limitations and future directions}
Despite these advances, several limitations of the current work and directions for future research should be acknowledged. By addressing the limitations outlined below and pursuing the proposed directions, we hope to further advance the theory of variational inference for Bayesian inverse problems.

1. Assumptions on the forward map. Conditions \ref{con:condreg} and \ref{con:condstab} require polynomial growth bounds on the uniform norm, Lipschitz constant, and stability constant of $\mathcal{G}$. For the PDEs considered, these bounds are verified using specially designed link functions; the common exponential link function $e^x$ would instead yield exponential growth. Severely ill‑posed problems (often with logarithmic stability) may also lead to exponential bounds. One could enforce constant bounds by restricting the prior to a ball in Besov space \cite{fan_contraction_2026}, but this requires prior knowledge of the Besov norm of the true parameter, which may therefore be impractical. Relaxing these conditions remains an open problem.

2. Regularity requirements on the truth. The contraction results require the true parameter $\theta_0$ to belong to $B_{pp}^\alpha(\mathcal{Z})$, with $\alpha$ sufficiently large relative to $d$ and $p$, yet parameters with low regularity are common in practice. A recent study reduces the regularity requirement at the cost of slower contraction rates, so such a result may not be minimax optimal \cite{fan_contraction_2026}. How to bridge this gap remains an open question.

3. Adaptive variational inference. The rescaled Besov prior uses a scaling $\rho = (N\varepsilon_N^2)^{-1/p}$ with $\varepsilon_N = N^{-\frac{\alpha+\kappa}{2\alpha+2\kappa+d}}$, so the prior depends explicitly on sample size and on the regularity $\alpha$ and smoothing index $\kappa$. We hope to adaptively choose $\alpha$ and preserve the optimal contraction rate . Exact Bayesian methods have attained optimal adaptive contraction rates for linear problems \cite{szabo_frequentist_2015,knapik_bayes_2016}, whereas this remains an open problem for general nonlinear inverse problems. Variational Bayes is gradually developing an adaptive theory for linear problems, as demonstrated by recent advances in hierarchical variational Bayes \cite{nieman_adaptive_2025}. However, this remains an open problem for general nonlinear inverse problems.

4. General Banach spaces.  A recent study has developed the contraction theory of the Bayesian posterior for linear inverse problems on general Banach spaces \cite{chen_2024_posterior}. It may also be possible to extend our theory to certain abstract evolution equations defined on general Banach spaces.}

\section{Proofs of results for inverse problems} \label{sec:proofIP}
\subsection{Relations between information distances and the $L^2$ norm}
In order to give our results about the variational posterior, we introduce some information distances and relate these distances on the laws $\{P_{\theta} : \theta \in \Theta\}$ to the forward map $\mathcal{G}({\theta})$.
With $\gamma>0$ and $\gamma \neq 1$, the $\gamma$-R$\acute{\mathrm{e}}$nyi divergence between two probability measures $P_1$ and $P_2$ is defined as
\begin{equation*}
    D_{\gamma}(P_1\Vert P_2) = \left\{ \begin{aligned}
                 &\frac{1}{\rho - 1} \log \int \bbra{\frac{dP_1}{dP_2}}^{\rho-1} dP_1  &\text{if}\ P_1 \ll P_2,\\
                 &+ \infty  &\text{otherwise}.
            \end{aligned} \right.
\end{equation*}
When $\gamma \rightarrow 1$, the R$\acute{\mathrm{e}}$nyi divergence converges to the Kullback–Leibler (KL) divergence, defined as
\begin{equation*}
    D(P_1\Vert P_2) = \left\{ \begin{aligned}
                &\int \log\bbra{\frac{dP_1}{dP_2}} dP_1 & \text{if}\ P_1 \ll P_2,\\
                 &+ \infty & \text{otherwise}.
            \end{aligned} \right.
\end{equation*}
Moreover, the Rényi divergence $D_{\gamma}(P_1||P_2)$ is a non-decreasing function of $\gamma$, which particularly gives
$D(P_1\Vert P_2) \leq D_{2}(P_1\Vert P_2).$
The Hellinger distance $h$ is defined as
\begin{equation*}
    h^2(P_1,P_2) = \frac{1}{2}\int\bbra{\sqrt{dP_1}-\sqrt{dP_2}}^2.
\end{equation*}
\par
The following proposition from \cite{zu2024consistencyvariationalbayesianinference} relates these information distances on the laws $\{P_{\theta} : \theta \in \Theta\}$ to the $L^2_{\lambda}(\mathcal{X},V)$ norm, assuming $\mathcal{G}(\theta)$ are uniformly bounded by a constant $U$ that may depend on $\Theta$.
\begin{proposition}\label{le2.1}
    Suppose that for a subset $\Theta \subset L^2(\mathcal{Z},\mathbb{R})$ and some finite constant $U = U_{\mathcal{G},\Theta} > 0$, we have
    \[\mathop{\sup}_{\theta \in \Theta}\Vert\mathcal{G}(\theta)\Vert_{\infty} \leq U.\]
    For the model density from (\ref{modeldensity}), we have for every $\theta_1,\theta_2 \in \Theta$,
    \begin{equation}\label{D2}
        D_2(P_{\theta_1}\Vert P_{\theta_2}) \leq e^{4U^2} \norm{\mathcal{G}(\theta_1)-\mathcal{G}(\theta_2)}_{L^2_{\lambda}(\mathcal{X},V)}^2,
    \end{equation}
    \begin{equation}\label{KL}
        D(P_{\theta_1}\Vert P_{\theta_2}) = \frac{1}{2} \norm{\mathcal{G}(\theta_1)-\mathcal{G}(\theta_2)}_{L^2_{\lambda}(\mathcal{X},V)}^2,
    \end{equation}
    and
    \begin{equation}\label{hellinger}
        C_U\Vert\mathcal{G}(\theta_1)-\mathcal{G}(\theta_2)\Vert_{L^2_{\lambda}(\mathcal{X},V)}^2 \leq h^2(p_{\theta_1},p_{\theta_2}) \leq \frac{1}{4}\Vert\mathcal{G}(\theta_1)-\mathcal{G}(\theta_2)\Vert_{L^2_{\lambda}(\mathcal{X},V)}^2
    \end{equation}
    where 
    \[C_U = \frac{1-e^{-U^2/2}}{2U^2}.\]
\end{proposition}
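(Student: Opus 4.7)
The plan is to work directly with the explicit Gaussian densities from \eqref{modeldensity} and compute each of the three integrals using the standard trick of completing the square in $y$, after which the Gaussian integral in $y$ reduces to a constant and leaves only an integral over $\mathcal{X}$ depending on the pointwise difference $\mathcal{G}(\theta_1)(x) - \mathcal{G}(\theta_2)(x) \in V$. Throughout, abbreviate $a_i(x) := \mathcal{G}(\theta_i)(x)$ so that $|a_i|_V \leq U$ for $i = 1,2$ and $|a_1(x) - a_2(x)|_V \leq 2U$.

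For the KL identity \eqref{KL}, I would first observe that
\begin{equation*}
\log\frac{p_{\theta_1}(y,x)}{p_{\theta_2}(y,x)} = \langle y, a_1(x) - a_2(x)\rangle_V + \tfrac{1}{2}(|a_2(x)|_V^2 - |a_1(x)|_V^2).
\end{equation*}
Under $P_{\theta_1}$, conditionally on $X=x$ one has $Y = a_1(x) + \varepsilon$ with $\varepsilon \sim N(0, I_V)$, so the conditional expectation of the log-ratio is simply $\tfrac{1}{2}|a_1(x) - a_2(x)|_V^2$. Integrating against $\lambda$ yields exactly $\tfrac{1}{2}\|\mathcal{G}(\theta_1) - \mathcal{G}(\theta_2)\|_{L^2_\lambda}^2$, which is \eqref{KL}.

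For the $D_2$ bound \eqref{D2}, I would compute $\int (p_{\theta_1}/p_{\theta_2}) \, p_{\theta_1} \, d\mu$ by expanding the exponent as $-|y - a_1|_V^2 + \tfrac{1}{2}|y - a_2|_V^2$ and completing the square in $y$, which produces a Gaussian kernel in $y$ centered at $2a_1 - a_2$ with identity covariance, times the residual factor $\exp\{|a_1(x) - a_2(x)|_V^2\}$. Performing the Gaussian $y$-integral leaves $\int_\mathcal{X} \exp\{|a_1-a_2|_V^2\} \, d\lambda$. Since $|a_1-a_2|_V^2 \in [0, 4U^2]$, applying the elementary inequality $e^t - 1 \leq t\,e^T$ valid for $t \in [0,T]$ with $T = 4U^2$, followed by $\log(1 + u) \leq u$, gives the bound $D_2(P_{\theta_1}\|P_{\theta_2}) \leq e^{4U^2}\|\mathcal{G}(\theta_1) - \mathcal{G}(\theta_2)\|_{L^2_\lambda}^2$.

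For the Hellinger estimate \eqref{hellinger}, I would start from $h^2(p_{\theta_1}, p_{\theta_2}) = 1 - \int \sqrt{p_{\theta_1}p_{\theta_2}}\, d\mu$. Using the identity $|y-a_1|_V^2 + |y-a_2|_V^2 = 2|y - m|_V^2 + \tfrac{1}{2}|a_1-a_2|_V^2$ with $m = (a_1+a_2)/2$, the $y$-integral collapses (it is again a Gaussian integral) and leaves $\int_\mathcal{X} \exp\{-\tfrac{1}{8}|a_1-a_2|_V^2\}\, d\lambda$. Thus $h^2 = \int_\mathcal{X}(1 - e^{-|a_1-a_2|_V^2/8}) \, d\lambda$. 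The upper bound follows from $1 - e^{-t} \leq t$. For the lower bound, the concavity of $t \mapsto 1-e^{-t}$ on $[0, U^2/2]$ (the range of $|a_1-a_2|_V^2/8$ since $|a_1-a_2|_V \leq 2U$) gives $1 - e^{-t} \geq \tfrac{1-e^{-U^2/2}}{U^2/2}\, t$, and integrating yields the claimed constant $C_U$.

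The calculations are essentially routine Gaussian manipulations; the only mildly subtle step is the $D_2$ bound, where one must avoid the naive estimate $e^{|a_1-a_2|^2} - 1 \leq (e^{4U^2}-1)$ (which is useless since it loses the $L^2$-dependence) and instead extract a linear factor in $|a_1-a_2|_V^2$ via $e^t - 1 \leq t e^T$ before integrating over $\mathcal{X}$ and applying $\log(1+u)\leq u$. Everything else is bookkeeping with standard elementary inequalities.
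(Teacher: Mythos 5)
Your approach — direct Gaussian computation by completing the square, then elementary pointwise inequalities integrated over $\mathcal{X}$ — is exactly the right way to prove this proposition, and the paper itself does not reproduce a proof (it cites the result from \cite{zu2024consistencyvariationalbayesianinference}), so there is no "official" argument to compare against line by line. Your treatment of the KL identity and the $D_2$ bound is correct and complete; in particular the inequality $e^t - 1 \le t\,e^T$ for $t \in [0,T]$ with $T = 4U^2$, followed by $\log(1+u) \le u$, is precisely the mildly non-obvious step, and you handle it properly.

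There is, however, a discrepancy in your Hellinger lower bound that you should be aware of. Under the paper's stated convention $h^2(P_1,P_2) = \frac{1}{2}\int(\sqrt{dP_1} - \sqrt{dP_2})^2$, one has $h^2 = 1 - \int \sqrt{p_1 p_2}\,d\mu$, which is what you use, and your Gaussian computation gives
\[
h^2 = \int_{\mathcal{X}}\bigl(1 - e^{-|a_1-a_2|_V^2/8}\bigr)\,d\lambda.
\]
Applying your chord inequality $1 - e^{-t} \ge \frac{1 - e^{-U^2/2}}{U^2/2}\,t$ with $t = |a_1 - a_2|_V^2/8 \in [0, U^2/2]$ and integrating yields
\[
h^2 \;\ge\; \frac{1 - e^{-U^2/2}}{U^2/2}\cdot\frac{1}{8}\,\|\mathcal{G}(\theta_1)-\mathcal{G}(\theta_2)\|_{L^2_\lambda}^2 \;=\; \frac{1 - e^{-U^2/2}}{4U^2}\,\|\mathcal{G}(\theta_1)-\mathcal{G}(\theta_2)\|_{L^2_\lambda}^2,
\]
which is $\tfrac{1}{2}C_U$, not the claimed $C_U = \frac{1 - e^{-U^2/2}}{2U^2}$. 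Your assertion that the integration "yields the claimed constant $C_U$" is therefore off by a factor of $2$. Similarly your upper bound actually gives $\frac{1}{8}\|\cdot\|^2$ rather than the stated $\frac{1}{4}\|\cdot\|^2$ (that one is fine, just stronger than needed). The consistent explanation is that the cited proposition's constants correspond to the \emph{unnormalized} squared Hellinger distance $\int(\sqrt{dP_1}-\sqrt{dP_2})^2 = 2h^2$ (the convention used in Nickl's monograph), in which case both the $\frac{1}{4}$ and the $C_U$ come out exactly as stated and your argument gives them verbatim. So your computation is correct; the factor of $2$ you silently absorbed traces to a mismatch between the definition of $h^2$ given in the paper's Section 5.1 and the constants carried over from the reference. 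If you want to match the stated constants precisely, either state the Hellinger distance without the $\frac{1}{2}$ prefactor, or multiply your final lower (and upper) bound by $2$ after switching conventions.
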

\subsection{Contraction rates for rescaled Besov priors}
 \textcolor{black}{To deduce the contraction result, We verify the three conditions formulated in \cite[Theorem 2.1]{zhang2020convergence}.  Specifically, it is sufficient to prove that the following three conditions hold for a loss function $L(\cdot,\cdot)$ and constants $C,C_1,C_2,C_3 > 0$ with $C > C_2 +C_3 +2$:
\begin{itemize}
    \item[(C1)] For any $\varepsilon> \varepsilon_N$,there exists a set $\Theta_N(\varepsilon)$ and a testing function $\Psi_n$, such that
    \begin{equation*}
            P^{(N)}_{\theta_0}\Psi_N+\sup_{\substack{\theta \in \Theta_N(\varepsilon)  \\
                 L(P^{(N)}_{\theta},P^{(N)}_{\theta_0}) \geq N \varepsilon^2}}
            P^{(N)}_{\theta}(1-\Psi_N)\leq \exp\{-C N \varepsilon^2\}
        \end{equation*}
    \item[(C2)] For any $\varepsilon> \varepsilon_N$, the set $\Theta_N(\varepsilon)$ in (C1) satisfies $\Pi_N(\Theta_N(\varepsilon)^c)\le e^{-C N \varepsilon^2}$;
    \item[(C3)] For some constant $\gamma > 1$, $\Pi_N(\theta : D_{\gamma}(P^{(N)}_{\theta_0}\Vert P^{(N)}_{\theta})\leq C_3N\varepsilon_N^2)\geq e^{-C_2 N \varepsilon^2}$.
\end{itemize}}
\begin{proof}[Proof of Theorem \ref{mainthm}]
We are going to verify the three condition in \cite[Theorem 2.1]{zhang2020convergence}. Steps (i) to (iii) below verify conditions (C1) to (C3) directly.
\par
        We denote $U_{\mathcal{G}}(M),L_{\mathcal{G}}(M),T_{\mathcal{G}}(M)$ by $C_UM^p,C_LM^l,C_TM^q$ respectively.
        Set \[H_N(\varepsilon) = \set{\theta = \theta_1 + \theta_2: \norm{\theta_1}_{(H^{k})^*}\leq M^l\varepsilon_N/L_{\mathcal{G}}(Mr_N(\varepsilon)), \Vert \theta_2 \Vert_{B_{pp}^{\alpha}}\leq M(\varepsilon/\varepsilon_{N})^{\frac{2}{p}}}\] with $r_N(\varepsilon)=(\varepsilon/\varepsilon_N)^{\frac{2}{p}}$. We further define $\Theta_N(\varepsilon) = H_N(\varepsilon) \cap B_{\mathcal{R}}(Mr_N(\varepsilon)) \cap \mathrm{span}\{\chi\psi_{lr}\}_{l=-1}^{+\infty}$
            % We set $\Theta_N(\varepsilon)=\{\theta: \theta = \theta_1 + \theta_2, \Vert \theta_1 \Vert_{+\infty}\leq M\varepsilon_N, \Vert \theta_2 \Vert_{\mathcal{H}}\leq M\varepsilon/\varepsilon_{N}\Vert \theta\Vert _{\mathcal{R}}\leq M\varepsilon/\varepsilon_{N}\}$ 
            for some M large enough, $r\in R_l$ and any $\varepsilon > \varepsilon_N$. We denote $\mathcal{R}'$ by $\mathcal{R}$ with domain replaced by $\mathbb{R}^d$.
        \par
        (i) For (C1), we follow the method used in the proof of \cite[Theorem 7.1.4]{gin2015mathematical}. Let 
        \[S_j = \{\theta \in \Theta_N(\varepsilon):4j \bar{m} \varepsilon \leq h(p_{\theta},p_{\theta_0})< 4(j+1)\bar{m}\varepsilon\}, \quad j \in \mathbb{N}, \quad \bar{m} > 0.\]
        We see that $S_j \subseteq \Theta_N(\varepsilon),$
        so it is sufficient to consider the metric entropy of $\Theta_N(\varepsilon)$.
        Here we introduce the (semi-) metric 
        $d_{\mathcal{G}}({\theta_1},{\theta_2}) := \Vert \mathcal{G}(\theta_1)-\mathcal{G}(\theta_2)\Vert _{L^2_{\lambda}(\mathcal{X},V)}.$
        Using Proposition \ref{le2.1}, formula (4.184) in \cite{gin2015mathematical} and Lipschitz condition of $\mathcal{G}$, 
        we have
        \begin{align*}
            \log N(\Theta_N(\varepsilon),h,j\bar{m}\varepsilon) &\leq \log N(\Theta_N(\varepsilon),h,\bar{m}\varepsilon)\leq \log N(\Theta_N(\varepsilon),d_{\mathcal{G}},2\bar{m}\varepsilon)\\
            &\leq \log N\Big(\Theta_N(\varepsilon),\norm{\ \cdot \ }_{(H^{\kappa})^*},\frac{2\bar{m}\varepsilon}{C_L(Mr_N(\varepsilon))^l}\Big).
        \end{align*}
        Because $\Theta_N(\varepsilon) \subset \mathrm{span}\{\chi\psi_{lr}\}$, any $f_i  \in \Theta_N(\varepsilon)$ can be represented as 
        \[ f_i=\sum_{l=-1}^{+\infty}\sum_{r\in R_l}f_{i,lr}\chi\psi_{lr}.\]
        Therefore, for any $f_i,f_j\in\Theta_N(\varepsilon)$,
        \begin{align*}
            \norm{f_i-f_j}_{(H^{\kappa}(\mathcal{Z}))^*}=&\norm{\sum_{l=-1}^{+\infty}\sum_{r\in R_l}(f_{i,lr}-f_{j,lr})\chi\psi_{lr}}_{(H^{\kappa}(\mathcal{Z}))^*} \\= &\sup_{\norm{\phi}_{H^{\kappa}(\mathcal{Z})}\leq 1}\pdt{\sum_{l=-1}^{+\infty}\sum_{r\in R_l}(f_{i,lr}-f_{j,lr})\psi_{lr}}{\chi\phi}_{L^2(\mathcal{Z})} \\
     \lesssim &\sup_{\norm{\phi}_{H_{K'}^{\kappa}(\mathcal{Z})}\leq 1}\pdt{\sum_{l=-1}^{+\infty}\sum_{r\in R_l}(f_{i,lr}-f_{j,lr})\psi_{lr}}{\phi}_{L^2(\mathcal{Z})}\\
     \lesssim & \norm{f_i-f_j}_{H^{{-\kappa}}(\mathcal{Z})},
        \end{align*}
        which indicates that
        \[ \log N(\Theta_N(\varepsilon),h,j\bar{m}\varepsilon) \leq \log N\Big(\Theta_N(\varepsilon),\Vert\cdot \Vert_{H^{-\kappa}(\mathcal{Z})},\frac{2\bar{m}\varepsilon}{C_L(Mr_N(\varepsilon))^l}\Big).\]
        The definition of $H_N(\varepsilon)$ implies that, for $\bar{m} \geq M^l$, a $\frac{\bar{m}\varepsilon}{C_L(Mr_N(\varepsilon))^l}$-covering in $\norm{\ \cdot \ }_{H^{-\kappa}(\mathcal{Z})}$ of $B_{pp}^{\alpha}(M(\varepsilon/\varepsilon_{N})^{\frac{2}{p}})$ is a $\frac{2\bar{m}\varepsilon}{C_L(Mr_N(\varepsilon))^l}$-covering in $\norm{\ \cdot \ }_{H^{-\kappa}(\mathcal{Z})}$ of $\Theta_N(\varepsilon)$. 
        It is sufficient to consider \[\log N\Big(B_{pp}^{\alpha}(M(\varepsilon/\varepsilon_{N})^{\frac{2}{p}}),\norm{\ \cdot \ }_{H^{-\kappa}(\mathcal{Z})},\frac{\bar{m}\varepsilon}{C_L(Mr_N(\varepsilon))^l}\Big).\]
        By Theorem 2 in Section 3.3.3 and Remark
 1 in Section 1.3.1 of \cite{triebel1996function}, we have
        \begin{equation}\label{entropy}
            \log N(B_{pp}^{\alpha}(r), \norm{\ \cdot \ }_{H^{-\kappa}(\mathcal{Z})}, \delta) \leq C_E\left(\frac{r}{\delta}\right)^{d/(\alpha + {\kappa})}, \quad r,\delta > 0
        \end{equation}
        with some constant $C_E$.
        For $\bar{m} = C_LM^{l+1}$ and $\varepsilon > \varepsilon_N$, we deduced that
            \begin{align} 
            &\log N(B_{pp}^{\alpha}(M(\varepsilon/\varepsilon_{N})^{\frac{2}{p}}),\norm{\ \cdot \ }_{H^{-\kappa}(\mathcal{Z})},\frac{\bar{m}\varepsilon}{C_L(Mr_N(\varepsilon))^l}) \nonumber \\
            \leq &C_E\left(\frac{M C_L(Mr_N(\varepsilon))^l}{\bar{m}\varepsilon}\cdot\big(\frac{\varepsilon}{\varepsilon_N}\big)^{\frac{2}{p}}\right)^{d/(\alpha +{\kappa})}\label{entropybound}\\
            \leq &C_E\left(\frac{M^{l+1}C_{L}}{\bar{m}}\right)^{d/(\alpha+{\kappa})} \cdot (\frac{\varepsilon}{\varepsilon_N})^{\frac{2d(l+1)}{p(\alpha+\kappa)}}\cdot N\varepsilon_N^2 
            \leq C_EN\varepsilon^2,\nonumber
            % \leq &\frac{C(\alpha,d)\cdot r^{d/\alpha}(M)^{2d/\alpha}C_{L}^{d/\alpha}}{\bar{m}^{d/\alpha}} N\varepsilon^2
            \end{align}         
        where we used $\varepsilon/\varepsilon_N>1$, $d(l+1) \leq p(\alpha+\kappa) $, and $\varepsilon_N^{-d/(\alpha+\kappa)} = N\varepsilon_N^2$.
        Thus, we have 
        \[N(S_j,h,j\bar{m}\varepsilon)\leq N(\varepsilon) := \exp\{C_EN\varepsilon^2\}.\]
        Choose a minimum finite set $S_j^{'}$ of points in each set $S_j$
        such that every $\theta \in S_j$ is within Hellinger distance $j\bar{m}\varepsilon$ of at least one of these points. By metric entropy bound above, for $j$ fixed, there are at most $N(j\varepsilon)$ such points $\theta_{jl} \in S_j^{'}$, and from \cite[Corollary 7.1.3]{gin2015mathematical} for each $\theta_{jl}$ there exists a test $\Psi_{N,jl}$ such that
            \[P^{(N)}_{\theta_0}\Psi_{N,jl} \leq e^{-C_tNj^2\bar{m}^2\varepsilon^2}, \quad \mathop{\sup}_{\theta \in S_j, h(p_{\theta},p_{\theta_{jl}})<j\bar{m}\varepsilon} P^{(N)}_{\theta}(1-\Psi_{N,jl}) \leq e^{-C_tNj^2\bar{m}^2\varepsilon^2}\]
        for some universal constant $C_t > 0$. Let $\Psi_N = \mathop{\max}_{j,l}\Psi_{N,jl}$. Then, we have
        \begin{align}
            P^{(N)}_{\theta_0}\Psi_N & \leq P^{(N)}_{\theta_0}(\sum_j\sum_l\Psi_{N,jl}) \leq \sum_j\sum_l\exp\{-C_tNj^2\bar{m}^2\varepsilon^2\} \nonumber \\
            &\leq N(\varepsilon)\sum_j  \exp\{-C_tNj^2\bar{m}^2\varepsilon^2\}  \label{test1}\\
            &\leq \frac{1}{1-\exp\{-(C_t\bar{m}^2-C_E)\}}\exp\{-(C_t\bar{m}^2-C_E)N\varepsilon^2\}  \nonumber\\
            &\leq \exp\{-CN\varepsilon^2\} \nonumber
        \end{align}
        and
        \begin{equation}\label{test}
            \mathop{\sup}_{\begin{array}{c}
                 \theta \in \Theta_N(\varepsilon)  \\
                 h(p_{\theta},p_{\theta_0}) \geq 4\bar{m}\varepsilon
            \end{array}}
            P^{(N)}_{\theta}(1-\Psi_N) = \mathop{\sup}_{\theta \in \cup_j S_j}P^{(N)}_{\theta}(1-\Psi_N)\leq \exp\{-C N \varepsilon^2\}
        \end{equation}
        for any $C>0$ when $M$ is large enough.
        Using Proposition \ref{le2.1}, Conditions \ref{con:condreg} and \ref{con:condstab}, 
        we have the following inequality for $\theta \in \Theta_N(\varepsilon)$ and constants $C_{U},C_{T}$ from conditions (\ref{bound}) and (\ref{stab}):
        \begin{align*}
            h(p_{\theta},p_{\theta_0}) 
        &\geq \frac{1}{2C_{U}\cdot(M\varepsilon/\varepsilon_N)^{\mu}}\Vert \mathcal{G}(\theta)-\mathcal{G}(\theta_0)\Vert _{L^{2}_{\lambda}(\mathcal{X},V)}\\
        &\geq \frac{1}{2C_{U}C_{T}\cdot(M\varepsilon/\varepsilon_N)^{\mu+\nu}}F(\Vert f_{\theta} - f_{\theta_0}\Vert ).
        \end{align*}
        From the inequality and direct calculations, we note that the set
        \[\{\theta \in \Theta_N(\varepsilon) : h(p_{\theta},p_{\theta_0}) \geq 4\bar{m}\varepsilon\}\]
        contains
        \[
        \{\theta \in \Theta_N(\varepsilon) : N\varepsilon_N^{\frac{2\mu}{\mu+1}}\Vert \mathcal{G}(\theta)-\mathcal{G}(\theta_0)\Vert _{L^{2}_{\lambda}}^{\frac{2}{\mu+1}} \geq \tilde{C}_1 N \varepsilon^2 \}\]
        and
        \[
        \{\theta \in \Theta_N(\varepsilon) : N\varepsilon_N^{\frac{2\mu+2\nu}{\mu+\nu+1}}[F(\Vert f_{\theta} - f_{\theta_0}\Vert )]^{\frac{2}{\mu+\nu+1}} \geq C_1 N \varepsilon^2 \}\]
        for $\tilde{C}_1 = (8\bar{m}C_UM^{\mu})^{\frac{2}{\mu+1}}$ and $C_1 = (8\bar{m}C_U C_TM^{\mu+\nu})^{\frac{2}{\mu+\nu+1}}$. Combined with (\ref{test}), we have 
        \begin{equation}
            \sup_{\substack{\theta \in \Theta_N(\varepsilon)  \\
                 L(P^{(N)}_{\theta},P^{(N)}_{\theta_0}) \geq N \varepsilon^2}}
            P^{(N)}_{\theta}(1-\Psi_N)\leq \exp\{-C N \varepsilon^2\}
        \end{equation}
        with  
        \[L(P^{(N)}_{\theta},P^{(N)}_{\theta_0}) := N\varepsilon_N^{\frac{2\mu+2\nu}{v+1}}[F(\Vert f_{\theta} - f_{\theta_0}\Vert )]^{\frac{2}{\mu+\nu+1}}/C_1 \text{ or } N\varepsilon_N^{\frac{2p}{\mu+1}}\Vert \mathcal{G}(\theta)-\mathcal{G}(\theta_0)\Vert _{L^{2}_{\lambda}}^{\frac{2}{\mu+1}}/\tilde{C}_1.\]
        Thus, we have confirmed (C1) with the statement provided above.

        (ii) By the definition of $\Theta_N(\varepsilon)$, we deduce that 
        \begin{align*}
            \Pi_N(\Theta_N(\varepsilon)^c) &= \Pi_N(H_N(\varepsilon)^c \cup B_{\mathcal{R}}(Mr_{N}(\varepsilon))^c) \\
            & \leq \Pi_N(H_N(\varepsilon)^c) + \Pi_N(B_{\mathcal{R}}(Mr_N(\varepsilon))^c).
        \end{align*}
        Using Lemma \ref{lem:concentration}, we have
        \begin{align*}
            &\Pi_N(\Vert \theta\Vert _{\mathcal{R}}>M(\varepsilon/\varepsilon_N)^{\frac{2}{p}}) \leq \Pi'(\Vert \theta'\Vert _{\mathcal{R}'}>cM(N\varepsilon^2)^{\frac{1}{p}})\leq c_1e^{-c_2M^pN\varepsilon^2} \leq \frac{1}{2}e^{-CN\varepsilon^2}
        \end{align*}
        for the constant $C$ in (i) and $M$ large enough. Then it is sufficient to prove 
        \[\Pi_N(H_N(\varepsilon)) \geq 1 - \exp\{-BN\varepsilon^2\} \geq 1 - \frac{1}{2}\exp\{-CN\varepsilon^2\}\]
        for $B = C+2$.
        By the definition of $\Pi_N$ and $\Pi'$, there exists a small enough constant $c>0$ such that
        \begin{align*}
            \Pi_N(H_N(\varepsilon)) \geq &\Pi'\bigg(\theta : \chi\theta = \chi\theta_1 + \chi\theta_2, \norm{\chi\theta_1}_{(H^{\kappa}(\mathcal{Z}))^*}\leq \frac{M^l\varepsilon_N(N\varepsilon_N^2)^\frac{1}{p}}{L_{\mathcal{G}}(Mr_N(\varepsilon))},\\
            &\qquad\qquad\qquad\qquad\qquad\qquad\qquad\norm{\chi\theta_2}_{B_{pp}^{\alpha}(\mathcal{Z})}\leq (MN\varepsilon^2)^\frac{1}{p}\bigg)\\
            \geq &\Pi'\bigg(\theta = \theta_1 + \theta_2: \norm{\theta_1}_{(H^{\kappa}(\mathbb{R}^d))^*}\leq c\frac{M^l\varepsilon_N(N\varepsilon_N^2)^\frac{1}{p}}{L_{\mathcal{G}}(Mr_N(\varepsilon))},\\
            &\quad \norm{\theta_2}_{B_{pp}^{\alpha}({\mathbb{R}^d})}\leq c(MN\varepsilon^2)^\frac{1}{p}, \theta_i\in \mathrm{span}\{\psi_{lr}\},i=1,2\bigg).
        \end{align*}
        Following the proof of Lemma B.3 in \cite{agapiou2024laplaceSupp}, we deduce that for some small enough $c'>0$, $\Pi_N(H_N(\varepsilon))$ is bounded below by
        \begin{align*}
            &\Pi'\bigg(\theta = \theta_1 + \theta_2 +\theta_3: \norm{\theta_1}_{(H^{\kappa}({\mathbb{R}^d}))^*}\leq \frac{cM^l\varepsilon_N(N\varepsilon_N^2)^\frac{1}{p}}{2L_{\mathcal{G}}(Mr_N(\varepsilon))}, \norm{\theta_2}_{B_{pp}^{\alpha}({\mathbb{R}^d})}\leq \frac{c}{2}(MN\varepsilon^2)^\frac{1}{p},\\
            & \qquad \qquad \qquad \qquad \qquad \qquad \norm{\theta_3}_{H^{\alpha+\frac{d}{2}-\frac{d}{p}}({\mathbb{R}^d})} \leq \frac{c'}{2}\sqrt{M}\sqrt{N}\varepsilon, \theta_i\in \mathrm{span}\{\psi_{lr}\},i=1,2,3\bigg).
        \end{align*}
        
        Then, Lemma \ref{lem:twolevel} with $A=\{\theta\in \mathrm{span}\{\psi_{lr}\}: \norm{\theta}_{(H^{\kappa}({\mathbb{R}^d})^*)}\leq \frac{cM^l\varepsilon_N(N\varepsilon_N^2)^\frac{1}{p}}{2L_{\mathcal{G}}(Mr_N(\varepsilon))}\}$ and $r=c''(MN\varepsilon^2)^\frac{1}{p}$ for some small enough $c''$ leads to
        \begin{equation}\label{pbabound}
            \Pi_N(H_N(\varepsilon)) \geq 1-\frac{1}{\Pi'(A)}\mexp{-\frac{c''M}{\Lambda}N\varepsilon^2},
        \end{equation}
        where $\Lambda$ is a universal constant.
        Next, we give a lower bound of $\Pi'(A)$. By the definition of $A$ together with Lemma \ref{lem:smallball}, we have 
        \begin{align*}
        -\log\Pi'(A) 
            &\leq c_0({cM^l\varepsilon_N(N\varepsilon_N^2)^\frac{1}{p}}/{2L_{\mathcal{G}}(Mr_N(\varepsilon))})^{-\frac{pd}{p(\alpha+{\kappa})-d}}\\
            &\leq c_0[M^l/L_{\mathcal{G}}(M\varepsilon/\varepsilon_N)]^{-\frac{pd}{p(\alpha+{\kappa})-d}}N\varepsilon_N^2,
        \end{align*}
        for some fixed $c_0 > 0$.
        Combined with Lipschitz condition (\ref{lip}) and $p(\alpha +\kappa) \geq (l+1)d$, we deduced that 
        \[-\log\Pi'(A) \leq c_0{C_L}^{-\frac{pd}{p(\alpha+{\kappa})-d}}\cdot \bigg(\frac{\varepsilon}{\varepsilon_N}\bigg)^{\frac{2ld}{p(\alpha+\kappa)-d}}\cdot N\varepsilon_N^2\leq c_0{C_L}^{-\frac{pd}{p(\alpha+{\kappa})-d}}\cdot N\varepsilon_N^2.\]
        Let $\tilde{C} = c_0{C_L}^{-\frac{pd}{p(\alpha+{\kappa})-d}}$ and we have
            $\Pi'(A) \geq \mexp{-\tilde{C}N\varepsilon^2}.$
        Using the last inequality and \eqref{pbabound} with $M$ large enough, we obtain
        \[\Pi_N(H_N(\varepsilon)) \geq 1-\mexp{-(\frac{c''M}{\Lambda}-\tilde{C})N\varepsilon^2}\geq 1-\mexp{-BN\varepsilon^2}.\]
        % We denote\[B_N = -2\Phi(e^{-BN\varepsilon^2_N}),\]
        % where $\Phi$ is the standard normal cumulative distribution function. By \cite[Lemma K.6]{IntroMy_ghosal2017fundamentals} we have
        % \[\sqrt{BN}\varepsilon \leq B_N \leq 2\sqrt{2BN}\varepsilon.\]
        % Hence, for $M \geq 2\sqrt{2B}$, we obtain
        % \begin{align*}
        %     &\Pi'(\theta' : \theta' = \theta'_1 + \theta'_2, \norm{\theta'_1}_{(H^{\kappa})^*} \leq M^s\sqrt{N}\varepsilon_N^2/L_{\mathcal{G}}(M\varepsilon/\varepsilon_N), \Vert \theta'_2 \Vert_{\mathcal{H}}\leq M\sqrt{N}\varepsilon)\\
        %     &\geq \Pi'(\theta' : \theta' = \theta'_1 + \theta'_2, \norm{\theta'_1}_{(H^{\kappa})^*} \leq M^s\sqrt{N}\varepsilon_N^2/L_{\mathcal{G}}(M\varepsilon/\varepsilon_N), \Vert \theta'_2 \Vert_{\mathcal{H}}\leq B_N).
        % \end{align*}
        % By (\ref{pbabound}) and \cite[Theorem 2.6.12]{gin2015mathematical},
        % \begin{align*}
        %     &\Pi'(\theta' : \theta' = \theta'_1 + \theta'_2, \norm{\theta'_1}_{(H^{\kappa})^*} \leq M^s\sqrt{N}\varepsilon_N^2/L_{\mathcal{G}}(M\varepsilon/\varepsilon_N), \Vert \theta'_2 \Vert_{\mathcal{H}}\leq B_N) \\
        %     &\geq \Phi(\Phi^{-1}[\Pi'(\norm{\theta'_1}_{(H^{\kappa})^*} \leq M^s\sqrt{N}\varepsilon_N^2/L_{\mathcal{G}}(M\varepsilon/\varepsilon_N))] + B_N) \\
        %     &\geq \Phi(\Phi^{-1}[e^{-BN\varepsilon^2}] + B_N) = 1 - e^{-BN\varepsilon^2}
        % \end{align*}
        % that is
        % \[\Pi_N(H_N(\varepsilon)) \geq 1 - e^{-BN\varepsilon^2}.\]
        % Finally, we have 
        % \begin{equation}\label{setbound}
        %     \Pi_N(\Theta_N(\varepsilon)^c) \leq e^{-CN\varepsilon^2}.
        % \end{equation}
        
        (iii) Next, we check (C3) for $\rho = 2$. Relying on Proposition \ref{le2.1}, we have
        \begin{equation*}
        D_2(P_{\theta_0}\Vert P_{\theta}) \leq e^{2(\norm{\mathcal{G}(\theta_0)}_{+\infty}^2+\norm{\mathcal{G}(\theta)}_{+\infty}^2)} \norm{\mathcal{G}(\theta_0)-\mathcal{G}(\theta)}_{L^2_{\lambda}(\mathcal{X},V)}^2.
        \end{equation*}
        Employing the above inequality and (\ref{bound}), we obtain
        \begin{align*}
            &\Pi_N(\theta : D_2(P^{(N)}_{\theta_0}\Vert P^{(N)}_{\theta})\leq C_3N\varepsilon_N^2)\\
            &\geq \Pi_N(\theta : D_2(P_{\theta_0}\Vert P_{\theta})\leq C_3\varepsilon_N^2, \norm{\theta-\theta_0}_{\mathcal{R}}\leq M')\\
            &\geq \Pi_N(\theta : \exp\{4U^2_{\mathcal{G}}(\bar{M})\}\norm{\mathcal{G}(\theta)-\mathcal{G}(\theta_0)}^2_{L^{2}_{\lambda}}\leq C_3\varepsilon_N^2, \norm{\theta-\theta_0}_{\mathcal{R}}\leq M')\\
            &\geq \Pi_N(\theta : \norm{\mathcal{G}(\theta)-\mathcal{G}(\theta_0)}_{L^{2}_{\lambda}}\leq \exp\{-2U_{\mathcal{G}}^2(\bar{M})\}\sqrt{C_3}\varepsilon_N, \norm{\theta-\theta_0}_{\mathcal{R}}\leq M')
        \end{align*}
        for a choosing constant $C_3>0$, some constant $M'>0$ and $\bar{M}=M' + \Vert \theta_0\Vert _{\mathcal{R}}$.
        Then, using Lipschitz condition of $\mathcal{G}$ and Lemma \ref{lem:decenter}, we have
        \begin{align*}
            &\Pi_N(\theta : D_2(P^{(N)}_{\theta_0}\Vert P^{(N)}_{\theta})\leq C_3N\varepsilon_N^2)\\
            &\geq \Pi_N(\theta :\norm{\theta-\theta_0}_{(H^{\kappa})^*} \leq \exp\{-2U_{\mathcal{G}}^2(\bar{M})\}\sqrt{C_3}\varepsilon_N/L_{\mathcal{G}}(\bar{M}), \Vert \theta-\theta_0\Vert _{\mathcal{R}}\leq M')\\
            &\geq e^{-\frac{1}{p}N\varepsilon_N^2\Vert \theta_0\Vert _{B_{pp}^{\alpha}}^p}\cdot\Pi'(\theta :\norm{\theta}_{(H^{\kappa})^*}\leq C_{\mathcal{G}}({\bar{M}})\varepsilon_N(N\varepsilon_N^2)^{\frac{1}{p}}, \Vert \theta\Vert _{\mathcal{R}'}\leq M'(N\varepsilon_N^2)^{\frac{1}{p}})\\
            &\geq e^{-\frac{1}{p}N\varepsilon_N^2\Vert \theta_0\Vert _{B_{pp}^{\alpha}}^p} \Big(\Pi'(\norm{\theta}_{(H^{\kappa})^*}\leq C_{\mathcal{G}}({\bar{M}})\varepsilon_N(N\varepsilon_N^2)^{\frac{1}{p}}) - \Pi'(\Vert \theta\Vert _{\mathcal{R}'} > M'(N\varepsilon_N^2)^{\frac{1}{p}})\Big)
        \end{align*}
        for $C_{\mathcal{G}}({\bar{M}}) = \exp\{-2U_{\mathcal{G}}^2(\bar{M})\}\sqrt{C_3}/L_{\mathcal{G}}(\bar{M})$.
        From the preceding proof in (ii), we have 
        \[
            \Pi'(\Vert \theta\Vert _{\mathcal{R}'} > M'(N\varepsilon_N^2)^{\frac{1}{p}}) \leq c_1\exp(-c_2{M'}^pN\varepsilon_N^2)\leq \exp(-c_2{M'}^p N\varepsilon_N^2/2),
        \]
        and
        \[
            \Pi'(\norm{\theta}_{(H^{\kappa})^*}\leq C_{\mathcal{G}}({\bar{M}})\varepsilon_N(N\varepsilon_N^2)^{\frac{1}{p}}) \geq \exp(-\tilde{C}_{\mathcal{G}}({\bar{M}})N\varepsilon_N^2),
        \]
        for $\tilde{C}_{\mathcal{G}}({\bar{M}})=c_0({C_{\mathcal{G}}({\bar{M}})})^{-\frac{pd}{p(\alpha+{\kappa})-d}}$ and large enough $M'$.
        Using the last three inequalities, for $C_3$ large enough, there exists some constant $C_2>0$ such that
        \[\Pi_N(\theta : D_2(P^{(N)}_{\theta_0}\Vert P^{(N)}_{\theta})\leq C_3N\varepsilon_N^2)\geq\exp(-C_2N\varepsilon_N^2).\]
         Thus, we have proved the condition (C3).
        
        Because the three conditions are verified, Theorem 2.1 in \cite{zhang2020convergence} gives
        \begin{equation*}
      P_{\theta_0}^{(N)}\hat{Q}N\varepsilon_N^{\frac{2\mu}{\mu+1}}\Vert \mathcal{G}(\theta)-\mathcal{G}(\theta_0)\Vert _{L^{2}_{\lambda}}^{\frac{2}{\mu+1}}\lesssim N(\varepsilon_N^2 +\gamma^2_N),
        \end{equation*}
        and
        \[P_{\theta_0}^{(N)}\hat{Q}N\varepsilon_N^{\frac{2\mu+2\nu}{\mu+\nu+1}}[F(\Vert f_{\theta} - f_{\theta_0}\Vert )]^{\frac{2}{\mu+\nu+1}}\lesssim N(\varepsilon_N^{2} +\gamma^2_N).\]
        That is to say, we obtain
        \[P_{\theta_0}^{(N)}\hat{Q}\Vert \mathcal{G}(\theta)-\mathcal{G}(\theta_0)\Vert _{L^{2}_{\lambda}}^{\frac{2}{\mu+1}}\lesssim \varepsilon_N^{\frac{2}{\mu+1}} +\gamma^2_N \cdot \varepsilon_N^{-\frac{2\mu}{\mu+1}},\]
        and
        \[P_{\theta_0}^{(N)}\hat{Q}[F(\Vert f_{\theta} - f_{\theta_0}\Vert )]^{\frac{2}{\mu+\nu+1}}\lesssim \varepsilon_N^{\frac{2}{\mu+\nu+1}} +\gamma^2_N \cdot \varepsilon_N^{-\frac{2\mu+2\nu}{\mu+\nu+1}}.\]
\end{proof}
\begin{proof}[Proof of Theorem \ref{boundgam}]
        The notations $\tilde{\mathcal{Q}}_E, \Psi, {\mathcal{Q}}_E$ are defined identically as in (\ref{OGMFinf}-\ref{GMFinf}). We recall that 
\[R(Q) = \frac{1}{N}\Big(D(Q\Vert \Pi_N) + Q[D(P^{(N)}_{\theta_0}\Vert P^{(N)}_{\theta})]\Big) = \frac{1}{N}D(Q\Vert \Pi_N)+ Q[D(P_{\theta_0}\Vert P_{\theta})].\]
It is necessary to bound $\frac{1}{N}D(Q\Vert \Pi_N)$ and $Q[D(P_{\theta_0}\Vert P_{\theta})]$ respectively. 
We see that the prior $\Pi_N$ can be represented as 
$\Pi_N = \tilde{\Pi}_N \circ \Psi^{-1},$
where
\[\tilde{\Pi}_N = \mathop{\bigotimes}_{l=-1}^{+\infty}\mathop{\bigotimes}_{r\in R_l}Exp(p;0,\sigma_l),\qquad\sigma_l=2^{-l(\alpha+\frac{d}{2}-\frac{d}{p})}(N\varepsilon_N^2)^{-\frac{1}{p}}.\]
We define $\tilde{Q}_N$ to be 
$\mathop{\bigotimes}_{l=-1}^{+\infty}\mathop{\bigotimes}_{r\in R_l}Exp(p;\theta_{0,lr},\tau_l),\quad \theta_{0,lr} = \pdt{\theta_0}{\psi_{lr}}_{L^2(\mathcal{Z})},$
where
\begin{equation*}
    \tau_{l}=\left\{\begin{aligned}
        &2^{-J(\alpha+{\kappa}+d/2)},& l\leq J,\\
        &\sigma_l,&l>J,
    \end{aligned}\right. \qquad 2^{Jd}\simeq N\varepsilon_N^2
\end{equation*}
% \begin{equation*}
%     m_{lr}=\left\{\begin{aligned}
%         &\theta_{0,lr}=\pdt{\theta_0}{\psi_{lr}}_{L^2(\mathcal{Z})},& l\leq J,\\
%         &0,&l>J,
%     \end{aligned}\right. \qquad 
%     b_{lr}=\left\{\begin{aligned}
%         &c2^{-J(\alpha+{\kappa}+d/2)},& l\leq J,\\
%         &\sigma_l,&l>J.
%     \end{aligned}\right.
% \end{equation*}
The probability measure $Q_N$ is defined as the push-forward of $\tilde{Q}_N$ via $\Psi$, that is, $Q_N = \tilde{Q}_N \circ \Psi^{-1}.$
It is easy to see \textcolor{black}{$Q_N\in {\mathcal{Q}}_E$}.
We first consider the upper bound of $\frac{1}{N}D(Q_N\Vert \Pi_N)$. Because the KL divergence decreases under push-forward \cite[section 10]{varadhan1984large}, we have 
    \begin{align*}
        D(Q_N\Vert \Pi_N) = &D(\tilde{Q}_N \circ \Psi^{-1}\Vert \tilde{\Pi}_N \circ \Psi^{-1}) 
        \leq D(\tilde{Q}_N\Vert \tilde{\Pi}_N) \\
        \leq &\sum_{l=-1}^{+\infty}\sum_{r\in R_l} D(Exp(p;\theta_{0,lr},\tau_l)\Vert Exp(p;0,\sigma_l)).
    \end{align*}
Then, it is sufficient to deduce the upper bound of $D(Exp(p;\theta_{0,lr},\tau_l)\Vert Exp(p;0,\sigma_l))$. {\color{black} We note that $C_{p,b}$ denotes the normalization constant of the p-exponential distribution $Exp(p;0,b)$ (see Lemma \ref{lem:moment} for the explicit expression).} When $l\leq J$,
we have
\begin{equation}\label{boundJ}
    \begin{aligned}
        &D(Exp(p;\theta_{0,lr},\tau_l)\Vert Exp(p;0,\sigma_l))\\
        =&\int\frac{1}{C_{p,\tau_l}}\mexp{-\frac{|x-\theta_{0,lr}|^p}{p\tau_l^p}}\Big(-\log C_{p,\tau_l}-\frac{|x-\theta_{0,lr}|^p}{p\tau_l^p}\Big)dx\\
        &-\int\frac{1}{C_{p,\tau_l}}\mexp{-\frac{|x-\theta_{0,lr}|^p}{p\tau_l^p}}\Big(-\log C_{p,\sigma_l}-\frac{|x|^p}{p\sigma_l^p}\Big)dx\\
        \lesssim&\log \frac{C_{p,\sigma_l}}{C_{p,\tau_l}}+\int\frac{1}{C_{p,\tau_l}}\mexp{-\frac{|x|^p}{p\tau_l^p}}\frac{|x|^p+|\theta_{0,lr}|^p}{p\sigma_l^p}dx\\
        \lesssim&\log \frac{C_{p,\sigma_l}}{C_{p,\tau_l}}+\frac{\tau_l^p}{\sigma_l^p}+\frac{|\theta_{0,lr}|^p}{\sigma_l^p}.
    \end{aligned}
\end{equation}
For $l>J$, we have
    \begin{align*}
        &D(Exp(p;\theta_{0,lr},\sigma_l)\Vert Exp(p;0,\sigma_l))\\
        =&\int\frac{1}{C_{p,\sigma_l}}\mexp{-\frac{|x-\theta_{0,lr}|^p}{p\sigma_l^p}}\Big(-\log C_{p,\sigma_l}-\frac{|x-\theta_{0,lr}|^p}{p\sigma_l^p}\Big)dx\\
        &-\int\frac{1}{C_{p,\sigma_l}}\mexp{-\frac{|x-\theta_{0,lr}|^p}{p\tau_l^p}}\Big(-\log C_{p,\sigma_l}-\frac{|x|^p}{p\sigma_l^p}\Big)dx\\
        =&\int\frac{1}{C_{p,\sigma_l}}\mexp{-\frac{|x|^p}{p\sigma_l^p}}\Big(\frac{|x+\theta_{0,lr}|^p-|x|^p}{p\sigma_l^p}\Big)dx\\
        =&\int\frac{1}{C_{p,\sigma_l}}\mexp{-\frac{|x|^p}{p\sigma_l^p}}\Big(\frac{|x+\theta_{0,lr}|^p + |x-\theta_{0,lr}|^p -2|x|^p}{2p\sigma_l^p}\Big)dx.
    \end{align*}
From the proof of Proposition 2.11 in \cite{agapiou2021rates}, we know that for $p\in[1,2]$
\begin{equation*}
    |x+\theta_{0,lr}|^p + |x-\theta_{0,lr}|^p -2|x|^p \leq 2|\theta_{0,lr}|^p.
\end{equation*}
Thus, we have
\begin{equation*}\label{boundinf}
    D(Exp(p;\theta_{0,lr},\sigma_l)\Vert Exp(p;0,\sigma_l)) \leq \int\frac{1}{C_{p,\sigma_l}}\mexp{-\frac{|x|^p}{p\sigma_l^p}}\frac{|\theta_{0,lr}|^p}{p\sigma_l^p}dx = \frac{|\theta_{0,lr}|^p}{p\sigma_l^p}.
\end{equation*}
The last inequality and \eqref{boundJ} together imply that
\begin{equation*}
    % \sum_{l=-1}^{+\infty}\sum_{r\in R_l} D(Exp(p;\theta_{0,lr},\tau_l)\Vert Exp(p;0,\sigma_l))
    D(Q_N\Vert \Pi_N)
    \lesssim \sum_{l=-1}^{J}\sum_{r\in R_l}\Big(\log \frac{C_{p,\sigma_l}}{C_{p,\tau_l}}+\frac{\tau_l^p}{\sigma_l^p}\Big) + \sum_{l=-1}^{+\infty}\sum_{r\in R_l} \frac{|\theta_{0,lr}|^p}{p\sigma_l^p}.
\end{equation*}
By $2^{Jd} \simeq N\varepsilon_N^2$ and $|R_l| \leq c_0 2^{ld}$, we deduce that
\begin{gather*}
    \sum_{l=-1}^J\sum_{r\in R_l}\log \frac{C_{p,\sigma_l}}{C_{p,\tau_l}}\lesssim 2^{Jd}\log \frac{\sigma_l}{\tau_l} \lesssim N\varepsilon_N^2 \log N,\quad 
    \sum_{l=-1}^J\sum_{r\in R_l}\frac{\tau_l^p}{\sigma_l^p}\lesssim 2^{Jd}\lesssim N\varepsilon_N^2.
\end{gather*}
For the last term, by the wavelet characterization of Sobolev norms (see \cite[section 4]{gin2015mathematical}), we deduce that
\begin{equation*}
    \sum_{l=-1}^{+\infty}\sum_{r\in R_l}\frac{|\theta_{0,lr}|^p}{\sigma_l^p} = \sum_{l=-1}^{+\infty}\sum_{r\in R_l} N\varepsilon_N^22^{pl(\alpha+\frac{d}{2}-\frac{d}{p})}|\theta_{0,lr}|^p = \norm{\theta_0}_{B^{\alpha}_{pp}(\mathcal{Z})}^pN\varepsilon_N^2\lesssim N\varepsilon_N^2,
\end{equation*}
which indicates that 
\begin{equation*}
    \frac{1}{N}D(Q_N\Vert \Pi_N) \leq \frac{1}{N}\sum_{l=-1}^{+\infty}\sum_{r\in R_l} D(Exp(p;\theta_{0,lr},\tau_l)\Vert Exp(p;0,\sigma_l))\lesssim \varepsilon_N^2\log N.
\end{equation*}

Next, we give an upper bound of $Q_N[D(P_{\theta_0}\Vert P_{\theta})]$. We assume independent random variables $\theta_{lr} = \theta_{0,lr} + \tau_l Z_{lr}$ where $Z_{lr} \sim Exp(p;0,1)$ for $r \in R_l, l \in \{-1,0,\dots,{+\infty}\}$. Using Proposition \ref{le2.1} and the condition (\ref{lip}), we have 
    \begin{align*}
        Q_ND(P_{\theta_0}\Vert P_{\theta}) = &\frac{1}{2}Q_N\norm{\mathcal{G}(\theta)-\mathcal{G}(\theta_0)}^2_{L^2_{\lambda}(\mathcal{X},V)} \\
        \lesssim & Q_N[(1+\norm{\theta}_{B_{pp}^{\alpha'}(\mathcal{Z})}^{2l})\norm{\theta-\theta_0}^2_{(H^{{\kappa}}(\mathcal{Z}))^*}]
        \\
        \lesssim & Q_N[(1+\norm{\theta}_{B_{\gamma\gamma}^{\beta'}(\mathcal{Z})}^{2l})\norm{\theta-\theta_0}^2_{(H^{{\kappa}}(\mathcal{Z}))^*}],
    \end{align*}     
for $\beta'<\alpha-d/p$ and large enough $\gamma\geq 2l$ such that $\beta'>\alpha'+d/\gamma$.
Because $Q_N$ is defined by the law of
$\sum_{l=-1}^{+\infty}\sum_{r\in R_l}\theta_{lr}\chi\psi_{lr}, \quad \theta_{lr}\sim Exp(p;\theta_{0,lr},\tau_l),$
we further have
\begin{equation}\label{thm3.5_1}
    \begin{aligned}
        Q_ND(P_{\theta_0}\Vert P_{\theta})\lesssim &E\big[\norm{\theta_0+\sum_{l=-1}^{+\infty}\sum_{r\in R_l}\tau_lZ_{lr}\chi\psi_{lr}}^{2l}_{B_{\gamma\gamma}^{\beta'}(\mathcal{Z})} \\ & \cdot\norm{\sum_{l=-1}^{+\infty}\sum_{r\in R_l}(\theta_{lr}-\theta_{0,lr})\chi\psi_{lr}}^2_{(H^{{\kappa}}(\mathcal{Z}))^*}\big]\\
        \lesssim &E \big[\big(\norm{\theta_0}^{2l}_{B_{\gamma\gamma}^{\beta'}(\mathcal{Z})}+\norm{\sum_{l=-1}^{+\infty}\sum_{r\in R_l}\tau_lZ_{lr}\chi\psi_{lr}}^{2l}_{B_{\gamma\gamma}^{\beta'}(\mathcal{Z})}\big) \\ &
        \cdot\norm{\sum_{l=-1}^{+\infty}\sum_{r\in R_l}(\theta_{lr}-\theta_{0,lr})\chi\psi_{lr}}^2_{(H^{{\kappa}}(\mathcal{Z}))^*}\big],
        % \cdot\Big(\norm{\sum_{l=-1}^{J}\sum_{r\in R_l}bZ_{lr}\chi\psi_{lr}}^2_{(H^{{\kappa}}(\mathcal{Z}))^*}+\norm{\sum_{l=J+1}^{+\infty}\sum_{r\in R_l}bZ_{lr}\chi\psi_{lr}}^2_{(H^{{\kappa}}(\mathcal{Z}))^*}\Big),
    \end{aligned}   
\end{equation}
where $\theta_{lr} = 0$ for all $l>J$.
Using \eqref{Sobolevinter2} and wavelet characterization of Sobolev norms (see \cite[section 4]{gin2015mathematical}), we deduce that
    \begin{align*}
        \norm{\sum_{l=-1}^{+\infty}\sum_{r\in R_l}\tau_lZ_{lr}\chi\psi_{lr}}_{B_{\gamma\gamma}^{\beta'}(\mathcal{Z})}^{2l} =& \norm{\chi\sum_{l=-1}^{+\infty}\sum_{r\in R_l}\tau_lZ_{lr}\psi_{lr}}_{B_{\gamma\gamma}^{\beta'}(\mathbb{R}^d)}^{2l}\\
        \lesssim & \norm{\sum_{l=-1}^{+\infty}\sum_{r\in R_l}\tau_lZ_{lr}\psi_{lr}}_{B_{\gamma\gamma}^{\beta'}(\mathbb{R}^d)}^{2l} \\
        \lesssim & \Big(\sum_{l=-1}^{+\infty}\sum_{r\in R_l}2^{\gamma l(\beta'+\frac{d}{2}-\frac{d}{\gamma})}\tau_l^{\gamma}|Z_{lr}|^{\gamma}\Big)^{\frac{2l}{\gamma}}\\
        \lesssim & 1 + \sum_{l=-1}^{+\infty}\sum_{r\in R_l}2^{\gamma l(\beta'+\frac{d}{2}-\frac{d}{\gamma})}\tau_l^{\gamma}|Z_{lr}|^{\gamma},
    \end{align*}
and
     \begin{align*}
     \norm{\sum_{l=-1}^{+\infty}\sum_{r\in R_l}(\theta_{lr}-\theta_{0,lr})\chi\psi_{lr}}_{(H^{{\kappa}}(\mathcal{Z}))^*} = &\sup_{\norm{\phi}_{H^{\kappa}(\mathcal{Z})}\leq 1}\pdt{\sum_{l=-1}^{+\infty}\sum_{r\in R_l}(\theta_{lr}-\theta_{0,lr})\psi_{lr}}{\chi\phi}_{L^2(\mathcal{Z})} \\
     = &\sup_{\norm{\phi}_{H^{\kappa}(\mathcal{Z})}\leq 1}\pdt{\sum_{l=-1}^{+\infty}\sum_{r\in R_l}(\theta_{lr}-\theta_{0,lr})\psi_{lr}}{\chi\phi}_{L^2(\mathbb{R}^d)} \\
     \lesssim &\sup_{\norm{\phi}_{H_c^{\kappa}(\mathcal{Z})}\leq 1}\pdt{\sum_{l=-1}^{+\infty}\sum_{r\in R_l}(\theta_{lr}-\theta_{0,lr})\psi_{lr}}{\phi}_{L^2(\mathbb{R}^d)}\\
     \lesssim & \norm{\sum_{l=-1}^{+\infty}\sum_{r\in R_l}(\theta_{lr}-\theta_{0,lr})\psi_{lr}}_{H^{{-\kappa}}(\mathbb{R}^d)} \\
     \lesssim & \sqrt{\sum_{l=-1}^{+\infty}\sum_{r\in R_l}2^{-2l{\kappa}}(\theta_{lr}-\theta_{0,lr})^2}.
     \end{align*}
     Applying the last two inequalities to \eqref{thm3.5_1}, we obtain
    \begin{align*}
        Q_ND(P_{\theta_0}\Vert P_{\theta})\lesssim& E\big(1 + \sum_{l=-1}^{+\infty}\sum_{r\in R_l}2^{\gamma l(\beta'+\frac{d}{2}-\frac{d}{\gamma})}\tau_l^{\gamma}|Z_{lr}|^{\gamma}\big)\cdot\sum_{l=-1}^{+\infty}\sum_{r\in R_l}2^{-2l{\kappa}}(\theta_{lr}-\theta_{0,lr})^2 \\
        \lesssim& E\big(1 + \sum_{l=-1}^{+\infty}\sum_{r\in R_l}2^{\gamma l(\beta'+\frac{d}{2}-\frac{d}{\gamma})}\tau_l^{\gamma}|Z_{lr}|^{\gamma}\big)\cdot\sum_{l=-1}^{+\infty}\sum_{r\in R_l}2^{-2l{\kappa}} \tau_l^2 Z_{lr}^2\\
        \lesssim& E\big(1 + \sum_{l=-1}^J\sum_{r\in R_l}2^{\gamma l(\beta'+\frac{d}{2}-\frac{d}{\gamma})}\tau_l^{\gamma}|Z_{lr}|^{\gamma}\big)\cdot\sum_{l=-1}^J\sum_{r\in R_l}\tau_l^2 Z_{lr}^2\\ &+  E\big(1 + \sum_{l=-1}^J\sum_{r\in R_l}2^{\gamma l(\beta'+\frac{d}{2}-\frac{d}{\gamma})}\tau_l^{\gamma}|Z_{lr}|^{\gamma}\big)\cdot\sum_{l=J+1}^{+\infty}\sum_{r\in R_l}2^{-2l{\kappa}}\sigma_l^2 Z_{lr}^2.
    \end{align*}
Provided that
\begin{gather}\label{Eb1_inf}
    E\sum_{l=-1}^{+\infty}\sum_{r\in R_l}2^{\gamma l(\beta'+\frac{d}{2}-\frac{d}{\gamma})}{\tau_l}^{\gamma}|Z_{lr}|^{\gamma} = O(1),\\ \label{Eb2_inf}  EZ_{\tilde{l}\tilde{r}}^2\sum_{l=-1}^{+\infty}\sum_{r\in R_l}2^{\gamma l(\beta'+\frac{d}{2}-\frac{d}{\gamma})}{\tau_l}^{\gamma}|Z_{lr}|^{\gamma} = O(1),\quad \mbox{for} \ \tilde{r}\in R_{\tilde{l}},\ \tilde{l} \in \{-1,0,\dots,+\infty\},
\end{gather}
then, since $2^{-J({\kappa}+\alpha)} \simeq \varepsilon_N$ and $|R_l| \leq c_0 2^{ld}$, we have
    \begin{align*}
        Q_ND(P_{\theta_0}\Vert P_{\theta})&\lesssim \left(\sum_{l=-1}^J\sum_{r\in R_l}\tau_l^2 + \sum_{l=J+1}^{+\infty}\sum_{r\in R_l}2^{-2l{\kappa}}\sigma_l^2 \right)\\
        &\lesssim 2^{-2J(\alpha+\kappa+\frac{d}{2})}\cdot2^{Jd} + 2^{-2J({\kappa}+\alpha-\frac{d}{p})}\cdot(N\varepsilon_N^2)^{-\frac{2}{p}}\\
        &\lesssim 2^{-2J(\alpha+\kappa)} + 2^{-2J({\kappa}+\alpha-\frac{d}{p})}\cdot2^{-2J\cdot\frac{d}{p}}\lesssim \varepsilon_N^2.
    \end{align*}
\par
Next, we verify (\ref{Eb1_inf}) and (\ref{Eb2_inf}) to complete the proof. For (\ref{Eb1_inf}),
    \begin{align*}
        E\sum_{l=-1}^{+\infty}\sum_{r\in R_l}2^{\gamma l(\beta'+\frac{d}{2}-\frac{d}{\gamma})}{\tau}_l^{\gamma}|Z_{lr}|^{\gamma} &\lesssim \sum_{l=-1}^{+\infty}\sum_{r\in R_l}2^{\gamma l(\beta' - \alpha +\frac{d}{p}-\frac{d}{\gamma})}E|Z_{lr}|^{\gamma}\\
        & \lesssim \sum_{l=-1}^{+\infty}2^{\gamma l(\beta' - \alpha +\frac{d}{p})}\lesssim 2^{\gamma(\alpha-\beta' -\frac{d}{p})}.
    \end{align*}
For (\ref{Eb2_inf}),  
we split this expectation into to two parts:
\begin{equation*}
    E2^{\gamma \tilde{l}(\beta'+\frac{d}{2}-\frac{d}{\gamma})}{\tau}_{\tilde{l}}^{\gamma}|Z_{\tilde{l}\tilde{r}}|^{\gamma}Z_{\tilde{l}\tilde{r}}^2\cdot E\sum_{(l,r)\neq (\tilde{l},\tilde{r})}2^{\gamma l(\beta'+\frac{d}{2}-\frac{d}{\gamma})}{\tau}_l^{\gamma}|Z_{lr}|^{\gamma}.
\end{equation*}
The second part can be bounded by a constant, as indicated by equation (\ref{Eb1_inf}). For the first part, we have
\begin{equation*}
        E2^{\gamma \tilde{l}(\beta'+\frac{d}{2}-\frac{d}{\gamma})}{\tau}_l^{\gamma}|Z_{\tilde{l}\tilde{r}}|^{\gamma}Z_{\tilde{l}\tilde{r}}^2 \lesssim 2^{\gamma \tilde{l}(\beta' - \alpha +\frac{d}{p}-\frac{d}{\gamma})}E|Z_{\tilde{l}\tilde{r}}|^{\gamma+2}\lesssim 2^{-\gamma(\beta' - \alpha +\frac{d}{p}-\frac{d}{\gamma})}. 
\end{equation*}
Therefore, formula (\ref{Eb2_inf}) is verified.
\end{proof}
\subsection{Contraction rates for high-dimensional Besov priors}
For the consistency theorem with the Gaussian sieve prior, we need the contraction of 
\[\norm{\theta_0 - P_J(\theta_0)}_{(H^{{\kappa}}(\mathcal{Z}))^*},\] 
which is considered in the following lemma.
\begin{lemma}\label{0con}
Assume that $\theta_0$ is supported in the compact subset $K\subset\mathcal{Z}$. Assume that $\norm{\theta_0}_{B_{pp}^{\alpha_0}(\mathcal{Z})}\leq B$ for $\alpha_0=\alpha\vee(\alpha+\frac{d}{p}-\frac{d}{2})$ and some constant $B > 0$. Then,
\begin{equation}
    \norm{\theta_0 - P_J(\theta_0)}_{(H^{{\kappa}}(\mathcal{Z}))^*}\lesssim \varepsilon_N
\end{equation}
for $\varepsilon_N=N^{-\frac{\alpha+{\kappa}}{2\alpha+2{\kappa}+d}}, 2^{J} \simeq N^{\frac{1}{2\alpha +2{\kappa} +d}}$.
\end{lemma}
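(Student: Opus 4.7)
The plan is to express the residual $\theta_0 - P_J(\theta_0)$ in its wavelet expansion, dualize the $(H^{\kappa}(\mathcal{Z}))^*$ norm into an $H^{-\kappa}(\mathbb{R}^d)$ norm (exactly as done in the proof of Theorem \ref{boundgam}), and then apply the standard wavelet characterization of Sobolev norms together with a Besov$\to$Sobolev embedding controlled by $\alpha_0$.

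First I would write
\[
\theta_0 - P_J(\theta_0) = \sum_{l>J}\sum_{r\in R_l}\theta_{0,lr}\psi_{lr},\qquad \theta_{0,lr}=\langle\theta_0,\psi_{lr}\rangle_{L^2(\mathcal{Z})},
\]
and, following the same dualization argument used for the term $\bigl\|\sum(\theta_{lr}-\theta_{0,lr})\chi\psi_{lr}\bigr\|_{(H^{\kappa}(\mathcal{Z}))^*}$ in the proof of Theorem \ref{boundgam} (extending test functions compactly supported in $\mathcal{Z}$ to $\mathbb{R}^d$ and invoking the wavelet characterization of $H^{-\kappa}$), bound
\[
\|\theta_0-P_J(\theta_0)\|_{(H^{\kappa}(\mathcal{Z}))^*}^2\;\lesssim\;\sum_{l>J}\sum_{r\in R_l}2^{-2l\kappa}\theta_{0,lr}^2.
\]

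Next, I factor the weights through level $J$: since $\kappa,\alpha\geq 0$ and $l>J$,
\[
2^{-2l\kappa}=2^{-2l(\alpha+\kappa)}\cdot 2^{2l\alpha}\leq 2^{-2J(\alpha+\kappa)}\cdot 2^{2l\alpha},
\]
so the sum is dominated by $2^{-2J(\alpha+\kappa)}\sum_{l,r}2^{2l\alpha}\theta_{0,lr}^2\lesssim 2^{-2J(\alpha+\kappa)}\|\theta_0\|_{H^{\alpha}(\mathbb{R}^d)}^2$, again by the wavelet characterization (this time of $H^{\alpha}$). With $2^{J}\simeq N^{1/(2\alpha+2\kappa+d)}$ one has $2^{-2J(\alpha+\kappa)}\simeq\varepsilon_N^2$, so the lemma reduces to showing $\|\theta_0\|_{H^{\alpha}}\lesssim 1$.

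The final (and main) step is this reduction, which is precisely where the definition $\alpha_0=\alpha\vee(\alpha+d/p-d/2)$ is used. When $p\geq 2$, $\alpha_0=\alpha$ and the compact support of $\theta_0$ gives $B_{pp}^{\alpha}(\mathcal{Z})\hookrightarrow B_{22}^{\alpha}(\mathcal{Z})=H^{\alpha}(\mathcal{Z})$ on a bounded domain, so $\|\theta_0\|_{H^{\alpha}}\lesssim\|\theta_0\|_{B_{pp}^{\alpha}}\leq B$. When $1\leq p<2$, $\alpha_0=\alpha+d/p-d/2$, and the standard Besov embedding $B_{pp}^{\alpha_0}\hookrightarrow B_{22}^{\alpha}=H^{\alpha}$ (valid because $\alpha_0-d/p=\alpha-d/2$) again yields $\|\theta_0\|_{H^{\alpha}}\lesssim B$. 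The delicate case is $p<2$, where the Sobolev deficit must be absorbed into the regularity of $\theta_0$; this is the only place the hypothesis $\alpha_0\geq\alpha+d/p-d/2$ is needed, and it explains the discussion in the remark following Theorem \ref{finalthmsv}. Combining the three steps gives $\|\theta_0-P_J(\theta_0)\|_{(H^{\kappa}(\mathcal{Z}))^*}\lesssim\varepsilon_N$.
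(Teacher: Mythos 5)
Your argument is essentially the paper's own proof: the same dualization of $(H^{\kappa}(\mathcal{Z}))^*$ into $H^{-\kappa}(\mathbb{R}^d)$ via compactly supported test functions, the same factoring $2^{-2l\kappa}\le 2^{-2J(\alpha+\kappa)}2^{2l\alpha}$, and the same case split at $p=2$ --- your invocation of the embedding $B^{\alpha_0}_{pp}\hookrightarrow H^{\alpha}$ for $p<2$ is precisely the paper's direct $\ell^p\hookrightarrow\ell^2$ computation on the coefficients $2^{l\alpha}\theta_{0,lr}$, using $\alpha_0+\frac{d}{2}-\frac{d}{p}=\alpha$. No substantive difference.
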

\begin{proof}
% Because of the localisation properties of Daubechies wavelets, for some $J_{\min} \in N$ large enough and $l\geq J_{\min}$,  there exists a compact set $K'\subset\mathcal{Z}$ such that $\mathrm{supp}(\psi_{lr})\subset K'$, which indicates that
% \begin{equation*}
%     \mathrm{supp}(\theta_0 - P_J(\theta_0))\subset K'
% \end{equation*}
% when $J\geq J_{\min}$.
We define compact set $K'$ such that $\mathrm{supp}(\chi)\subset K'\subset \mathcal{Z}$.
\begin{align*}
     \norm{\theta_0 - P_J(\theta_0)}_{(H^{{\kappa}}(\mathcal{Z}))^*} = &\norm{\sum_{l=J+1}^{+\infty}\sum_{r\in R_l}\theta_{0,lr}\chi\psi_{lr}}_{(H^{{\kappa}}(\mathcal{Z}))^*} \\ =&\sup_{\norm{\phi}_{H^{\kappa}(\mathcal{Z})}\leq 1}\pdt{\sum_{l=J+1}^{+\infty}\sum_{r\in R_l}\theta_{0,lr}\psi_{lr}}{\chi\phi}_{L^2(\mathcal{Z})} \\
     \lesssim &\sup_{\norm{\phi}_{H_{K'}^{\kappa}(\mathcal{Z})}\leq 1}\pdt{\sum_{l=J+1}^{+\infty}\sum_{r\in R_l}\theta_{0,lr}\psi_{lr}}{\phi}_{L^2(\mathbb{R}^d)}\\
     \lesssim & \norm{\sum_{l=J+1}^{+\infty}\sum_{r\in R_l}\theta_{0,lr}\psi_{lr}}_{H^{{-\kappa}}(\mathbb{R}^d)} \\
     \lesssim & \sqrt{\sum_{l=J+1}^{+\infty}\sum_{r\in R_l}2^{-2l{\kappa}}\theta_{0,lr}^2}
     \lesssim 2^{-J(\alpha+\kappa)}
     \sqrt{\sum_{l=J+1}^{+\infty}\sum_{r\in R_l}2^{2l{\alpha}}\theta_{0,lr}^2}.
     \end{align*}
For $p\in [1,2]$, we have
\begin{align*}
     \sqrt{\sum_{l=J+1}^{+\infty}\sum_{r\in R_l}2^{2l{\alpha}}\theta_{0,lr}^2} \leq &\Big(\sum_{l=J+1}^{+\infty}\sum_{r\in R_l}2^{pl{\alpha}}\theta_{0,lr}^p\Big)^{\frac{1}{p}}\\\leq&\Big(\sum_{l=J+1}^{+\infty}\sum_{r\in R_l}2^{pl({\alpha_0}+\frac{d}{2}-\frac{d}{p})}\theta_{0,lr}^p\Big)^{\frac{1}{p}} \leq \norm{\theta_0}_{B_{pp}^{\alpha_0}(\mathcal{Z})}.
     \end{align*}
When $p> 2$, 
\begin{align*}
     \sqrt{\sum_{l=J+1}^{+\infty}\sum_{r\in R_l}2^{2l{\alpha}}\theta_{0,lr}^2} \leq \norm{\theta_0}_{H^{\alpha}(\mathcal{Z})} \lesssim \norm{\theta_0}_{B_{pp}^{\alpha_0}(\mathcal{Z})}.
     \end{align*}
Further using $2^{J} \simeq N^{\frac{1}{2\alpha +2{\kappa} +d}}$, we deduce that
\begin{equation*}
\norm{\theta_0 - P_J(\theta_0)}_{(H^{\kappa}(\mathcal{Z}))^*}\lesssim 2^{-J(\alpha+\kappa)} \lesssim \varepsilon_N.
\end{equation*}
\end{proof}
\begin{proof}[Proof of Theorem \ref{mainthmsv}]
Following the proof of Theorem \ref{mainthm}, we will now verify conditions (C1) through (C3) as outlined in \cite[Theorem 2.1]{zhang2020convergence}. 

We first verify conditions (C1) and (C2). When $p\in[1,2]$, we define the sets \( H_N(\varepsilon) \) and \( \Theta_N(\varepsilon) \) as stated in Theorem \ref{mainthm}. Since those lemmas used in the proof of Theorem \ref{mainthm} are also proved for the high-dimensional prior \( \Pi'_J \), the proof for conditions (C1) and (C2) in Theorem \ref{mainthm} also holds true for the set \( \Theta_N(\varepsilon) \). For $p >2$, we define the set
\begin{align*}
H_N(\varepsilon) = \Big\{\theta: \theta = \theta_1 + \theta_2, \norm{\theta_1}_{(H^{k}(\mathcal{Z}))^*}\leq \frac{M^l\varepsilon_N}{L_{\mathcal{G}}(Mr_N(\varepsilon))},\Vert \theta_2 \Vert_{H^{\tilde{\alpha}}(\mathcal{Z})}\leq M\frac{\sqrt{N}\varepsilon}{(\sqrt{N}\varepsilon_{N})^{\frac{2}{p}}}\Big\}  
\end{align*}
 with $\tilde{\alpha}=\alpha+\frac{d}{2}-\frac{d}{p}$. We further define $\Theta_N(\varepsilon) = H_N(\varepsilon) \cap B_{\mathcal{R}}(Mr_N(\varepsilon))\cap\mathrm{span}\{\psi_{lr}\}_{l=-1}^{J}$ for $r\in R_l$, $r_N(\varepsilon)=(\varepsilon/\varepsilon_N)^{\frac{2}{p}}$ and $\varepsilon>\varepsilon_N$. We begin with verification of the condition (C1). Following the same steps in the proof of Theorem \ref{mainthm}, we note that it is sufficient to prove 
\[\log N\Big(B_{H^{\tilde{\alpha}}}\big({M\sqrt{N}\varepsilon}/{(\sqrt{N}\varepsilon_{N})^{\frac{2}{p}}}\big),\Vert\cdot \Vert_{H^{-\kappa}(\mathcal{Z})},\frac{\bar{m}\varepsilon}{C_L(Mr_N(\varepsilon))^l}\Big) \leq \mexp{-cN\varepsilon^2}\]
for some constant $c>0$.
Using \eqref{entropy}, for $\bar{m} = C_LM^{l+1}$ and $\varepsilon > \varepsilon_N$, we deduced that
            \begin{align*} 
            &\log N(B_{H^{\tilde{\alpha}}}\big({M\sqrt{N}\varepsilon}/{(\sqrt{N}\varepsilon_{N})^{\frac{2}{p}}}\big),{\color{black}\Vert\cdot \Vert_{H^{-\kappa}(\mathcal{Z})},}\frac{\bar{m}\varepsilon}{C_L(Mr_N(\varepsilon))^l}) \\
            \leq &C_E\left(\frac{MC_L(Mr_N(\varepsilon))^l(\sqrt{N}\varepsilon)^{1-\frac{2}{p}}}{\bar{m}\varepsilon}\cdot\big(\frac{\varepsilon}{\varepsilon_N}\big)^{\frac{2}{p}}\right)^{d/(\tilde{\alpha} +{\kappa})}\\
            \leq &C_E\left(\frac{M^{l+1}C_{L}}{\bar{m}}\right)^{d/(\tilde{\alpha}+{\kappa})} \cdot (\frac{\varepsilon}{\varepsilon_N})^{\frac{2d(l+1)}{p(\tilde{\alpha}+\kappa)}}\cdot \Big(\frac{(\sqrt{N}\varepsilon)^{1-\frac{2}{p}}}{\varepsilon}\Big)^{\color{black}\frac{d}{\tilde{\alpha}+\kappa}}\\
            \leq &C_E\cdot(\frac{\varepsilon}{\varepsilon_N})^{2}\cdot \big(N^{\frac{p-2}{2p}}\varepsilon_N^{-\frac{2}{p}}\big)^{\color{black}\frac{d}{\tilde{\alpha}+\kappa}}\\
            =&C_E\cdot(\frac{\varepsilon}{\varepsilon_N})^{2}\cdot\big((N\varepsilon_N^2)^{\frac{2\alpha+2\kappa+d}{d}\cdot\frac{p-2}{2p}}\cdot (N\varepsilon_N^2)^{\frac{\alpha+\kappa}{d}\cdot\frac{2}{p}}\big)^{\color{black}\frac{d}{\tilde{\alpha}+\kappa}}\\
            =&C_E\cdot(\frac{\varepsilon}{\varepsilon_N})^{2}\cdot N\varepsilon_N^2
            =C_EN\varepsilon^2,
            \end{align*}          
        where we used $\varepsilon/\varepsilon_N>1$, $p(\tilde{\alpha}+\kappa)\geq p({\alpha}+\kappa)\geq d(l+1) $, and $\varepsilon_N^{-d/(\alpha+\kappa)} = N\varepsilon_N^2$.
        Then, we consider the condition (C2). Following the same steps in the proof of Theorem 3.4, we note that it is sufficient to prove that for some small enough $c'>0$, $\Pi_N(H_N(\varepsilon))$ is bounded below by
        \begin{align*}
            &\Pi_J'\bigg(\theta = \theta_1 + \theta_2 +\theta_3: \norm{\theta_1}_{(H^{\kappa}({\mathbb{R}^d}))^*}\leq \frac{cM^l\varepsilon_N(N\varepsilon_N^2)^\frac{1}{p}}{L_{\mathcal{G}}(Mr_N(\varepsilon))}, \norm{\theta_2}_{B_{pp}^{\alpha}({\mathbb{R}^d})}\leq \frac{c'}{2}(MN\varepsilon^2)^\frac{1}{p},\\
            & \qquad \qquad \qquad \qquad \qquad \norm{\theta_3}_{H^{\alpha+\frac{d}{2}-\frac{d}{p}}({\mathbb{R}^d})} \leq \frac{c}{2}\sqrt{M}\sqrt{N}\varepsilon, \theta_i\in \mathrm{span}\{\psi_{lr}\},i=1,2,3\bigg).
        \end{align*}
        By the definition of $\Pi_N$ and $\Pi'_J$, there exists a small enough constant $c>0$ such that
        \begin{align*}
            \Pi_N(H_N(\varepsilon)) \geq &\Pi_J'\bigg(\theta : \chi\theta = \chi\theta_1 + \chi\theta_2, \norm{\chi\theta_1}_{(H^{\kappa}(\mathcal{Z}))^*}\leq \frac{M^l\varepsilon_N(N\varepsilon_N^2)^\frac{1}{p}}{L_{\mathcal{G}}(Mr_N(\varepsilon))},\\
            &\qquad\qquad\qquad\qquad\qquad\qquad\qquad\norm{\chi\theta_2}_{H^{\tilde{\alpha}}(\mathcal{Z})}\leq {M\sqrt{N}\varepsilon}\bigg)\\
            \geq &\Pi_J'\bigg(\theta = \theta_1 + \theta_2: \norm{\theta_1}_{(H^{\kappa}(\mathbb{R}^d))^*}\leq c\frac{M^l\varepsilon_N(N\varepsilon_N^2)^\frac{1}{p}}{L_{\mathcal{G}}(Mr_N(\varepsilon))},\\
            &\quad \norm{\theta_2}_{H^{\tilde{\alpha}}({\mathbb{R}^d})}\leq c{\sqrt{M}\sqrt{N}\varepsilon}, \theta_i\in \mathrm{span}\{\psi_{lr}\},i=1,2\bigg).
        \end{align*}
        Assuming that $\theta\in B_{pp}^{\alpha}({\mathbb{R}^d})\cap \mathrm{span}\{\psi_{lr}\}$ and $\norm{\theta}_{B_{pp}^{\alpha}({\mathbb{R}^d})}\leq \frac{c'}{2}(MN\varepsilon^2)^\frac{1}{p}$, we deduce that
        \begin{align*}
            \norm{\theta}_{H^{\tilde{\alpha}}({\mathbb{R}^d})}^2 = &\sum_{l=-1}^{J}\sum_{r\in R_l}2^{2l\tilde{\alpha}}\theta_{lr}^2
            \lesssim  \Big(\sum_{l=-1}^{J}2^{ld}\Big)^{\frac{p-2}{p}}\cdot\Big(\sum_{l=-1}^{J}\sum_{r\in R_l}2^{pl(\alpha+\frac{d}{2}-\frac{d}{p})}|\theta_{lr}|^p\Big)^{\frac{2}{p}}\\
            \lesssim &2^{Jd\cdot\frac{p-2}{p}}\cdot\norm{\theta}_{B_{pp}^{\alpha}({\mathbb{R}^d})}^2
        \end{align*}
        Thus, for $c'$ small enough, we have
        \begin{align*}
            \norm{\theta}_{H^{\tilde{\alpha}}({\mathbb{R}^d})} \lesssim 2^{Jd\cdot\frac{p-2}{2p}}\cdot\norm{\theta}_{B_{pp}^{\alpha}({\mathbb{R}^d})}
            \lesssim(\sqrt{N}\varepsilon_N)^{1-\frac{2}{p}}\cdot \frac{c'}{2}(\sqrt{M}\sqrt{N}\varepsilon)^{\frac{2}{p}}
            \leq \frac{c}{2}\sqrt{M}\sqrt{N}\varepsilon,
        \end{align*}
        which indicates that
        \begin{align*}
            &\Pi_N(H_N(\varepsilon)) \\ \geq &\Pi_J'\bigg(\theta = \theta_1 + \theta_2: \norm{\theta_1}_{(H^{\kappa}(\mathbb{R}^d))^*}\leq c\frac{M^l\varepsilon_N(N\varepsilon_N^2)^\frac{1}{p}}{L_{\mathcal{G}}(Mr_N(\varepsilon))},\quad \norm{\theta_2}_{H^{\tilde{\alpha}}({\mathbb{R}^d})}\leq c{\sqrt{M}\sqrt{N}\varepsilon},\\ 
&\qquad\qquad\qquad\qquad\qquad\qquad\qquad\qquad\qquad\qquad\qquad\qquad\theta_i\in \mathrm{span}\{\psi_{lr}\},i=1,2\bigg)\\
            \geq&\Pi_J'\bigg(\theta_1 + \theta_2 +\theta_3: \norm{\theta_1}_{(H^{\kappa}({\mathbb{R}^d}))^*}\leq \frac{cM^l\varepsilon_N(N\varepsilon_N^2)^\frac{1}{p}}{L_{\mathcal{G}}(Mr_N(\varepsilon))}, \norm{\theta_2}_{B_{pp}^{\alpha}({\mathbb{R}^d})}\leq \frac{c'}{2}(MN\varepsilon^2)^\frac{1}{p},\\
            & \qquad \qquad \qquad \qquad \quad \norm{\theta_3}_{H^{\alpha+\frac{d}{2}-\frac{d}{p}}({\mathbb{R}^d})} \leq \frac{c}{2}\sqrt{M}\sqrt{N}\varepsilon, \theta_i\in \mathrm{span}\{\psi_{lr}\},i=1,2,3\bigg).
        \end{align*}
        
Finally, the proof for condition (C3) requires a slight modification because \( \theta_0 \) is no longer within $Z$. Using Lemma \ref{0con} and Lipschitz condition (\ref{lip}), we have
\begin{equation*}
    \norm{\mathcal{G}(\theta_0)-\mathcal{G}(P_J(\theta_0))}_{L^{2}_{\lambda}(\mathcal{X},\mathbb{R})} \leq C_0 \varepsilon_N
\end{equation*}
for some constant $C_0$. Thus, 
\begin{align*}
            &\Pi_N(\theta : D_2(P^{(N)}_{\theta_0}\Vert P^{(N)}_{\theta})\leq C_3N\varepsilon_N^2)\\
            &=\Pi_N(\theta : N\log\int_{\mathcal{X}}\exp\{[\mathcal{G}(\theta)(x)-\mathcal{G}(\theta_0)(x)]^2\}d\lambda(x)\leq C_3N\varepsilon_N^2)\\
            &\geq \Pi_N(\theta : \norm{\mathcal{G}(\theta)-\mathcal{G}(\theta_0)}_{L^{2}_{\lambda}}\leq C_3\exp\{-2U_{\mathcal{G}}^2(\bar{M})\}\varepsilon_N, \norm{\theta-\theta_0}_{\mathcal{R}}\leq M')\\
            &\geq \Pi_N(\theta :\norm{\mathcal{G}(\theta)-\mathcal{G}(P_J(\theta_0))}_{L^2_{\lambda}}\leq C'(\bar{M})\varepsilon_N, \norm{\theta-P_J(\theta_0)}_{\mathcal{R}}\leq M'')
\end{align*}
where the lower bound of the last probability see the proof of Theorem \ref{mainthm}.
\end{proof}
\begin{proof}[Proof of Theorem \ref{boundgamfinite}]
The proof of this theorem is similar to that of Theorem \ref{boundgam}, so we will omit same procedure. The notations $\tilde{\mathcal{Q}}_E^J, \Psi_J, {\mathcal{Q}}_E^J$ are defined identically as in (\ref{OGMF}-\ref{GMF}). We recall that 
\[R(Q) = \frac{1}{N}\Big(D(Q\Vert \Pi_N) + Q[D(P^{(N)}_{\theta_0}\Vert P^{(N)}_{\theta})]\Big) = \frac{1}{N}D(Q\Vert \Pi_N)+ Q[D(P_{\theta_0}\Vert P_{\theta})].\]
It is necessary to bound $\frac{1}{N}D(Q\Vert \Pi_N)$ and $Q[D(P_{\theta_0}\Vert P_{\theta})]$ respectively. 
We define $\tilde{Q}_N$ to be 
$\mathop{\bigotimes}_{l=-1}^J\mathop{\bigotimes}_{r\in R_l}Exp(q;\theta_{0,lr},\tau), \quad \tau \simeq 2^{-J(\alpha+{\kappa}+d/2)},\quad \theta_{0,lr} = \pdt{\theta_0}{\psi_{lr}}_{L^2(\mathcal{Z})}.$
The probability measure $Q_N$ is defined as the push-forward of $\tilde{Q}_N$ via $\Psi_J$, that is, $Q_N = \tilde{Q}_N \circ \Psi_J^{-1}.$
It is easy to see $Q_N\in {\mathcal{Q}}_E^J(q)$. We also see that the prior $\Pi_N$ can be represented as 
$\Pi_N = \tilde{\Pi}_N \circ \Psi_J^{-1},$
where $\tilde{\Pi}_N = \mathop{\bigotimes}_{l=-1}^J\mathop{\bigotimes}_{r\in R_l}Exp(p;0,\sigma_l)$ for $\sigma_l=2^{-l(\alpha+\frac{d}{2}-\frac{d}{p})}(N\varepsilon_N^2)^{-\frac{1}{p}}$.
We first consider the upper bound of $\frac{1}{N}D(Q_N\Vert \Pi_N)$. Because the KL divergence decreases under push-forward \cite[section 10]{varadhan1984large}, we have 
\begin{equation*}
    \begin{aligned}
        D(Q_N\Vert \Pi_N) = &D(\tilde{Q}_N \circ \Psi_J^{-1}\Vert \tilde{\Pi}_N \circ \Psi_J^{-1})
        \leq D(\tilde{Q}_N\Vert \tilde{\Pi}_N) \\
        \leq &\sum_{l=-1}^J\sum_{r\in R_l} D(Exp(q;\theta_{0,lr},\tau)\Vert Exp(p;0,\sigma_l)).
    \end{aligned}
\end{equation*}
Then, it is sufficient to consider the upper bound of $D(Exp(q;\theta_{0,lr},b)\Vert Exp(p;0,\sigma_l))$:
\begin{equation*}
        D(Exp(q;\theta_{0,lr},\tau)\Vert Exp(p;0,\sigma_l))
        \lesssim \log \frac{C_{p,\sigma_l}}{C_{q,\tau}}+\frac{\tau^p}{\sigma_l^p}+\frac{|\theta_{0,lr}|^p}{\sigma_l^p}.
\end{equation*}
% \begin{equation*}
%     \begin{aligned}
%         &D(Exp(q;\theta_{0,lr},\tau)\Vert Exp(p;0,\sigma_l))\\
%         =&\int\frac{1}{C_{q,\tau}}\mexp{-\frac{|x-\theta_{0,lr}|^q}{q\tau^q}}\Big(-\log C_{q,\tau}-\frac{|x-\theta_{0,lr}|^q}{q\tau^q}\Big)dx\\
%         &-\int\frac{1}{C_{q,\tau}}\mexp{-\frac{|x-\theta_{0,lr}|^q}{q\tau^q}}\Big(-\log C_{p,\sigma_l}-\frac{|x|^p}{p\sigma_l^p}\Big)dx\\
%         \lesssim&\log \frac{C_{p,\sigma_l}}{C_{q,\tau}}-\int\frac{1}{C_{q,\tau}}\mexp{-\frac{|x|^q}{q\tau^q}}\frac{|x|^q}{q\tau^q}dx+\int\frac{1}{C_{q,\tau}}\mexp{-\frac{|x|^q}{q\tau^q}}\frac{|x|^p+|\theta_{0,lr}|^p}{p\sigma_l^p}dx\\
%         \lesssim&\log \frac{C_{p,\sigma_l}}{C_{q,\tau}}+\frac{\tau^p}{\sigma_l^p}+\frac{|\theta_{0,lr}|^p}{\sigma_l^p}.
%     \end{aligned}
% \end{equation*}
By $2^{Jd} \simeq N\varepsilon_N^2$ and $d_J \leq c_0 2^{Jd}$, we deduce that
\begin{equation*}
    \sum_{l=-1}^J\sum_{r\in R_l}\log \frac{C_{p,\sigma_l}}{C_{q,b}}\lesssim 2^{Jd}\log \frac{\sigma}{\tau} \lesssim N\varepsilon_N^2 \log N,
\end{equation*}
\begin{equation*}
    \sum_{l=-1}^J\sum_{r\in R_l}\frac{\tau^p}{\sigma_l^p}\lesssim 2^{Jd}\lesssim N\varepsilon_N^2.
\end{equation*}
For the last term, we have
\begin{equation*}
    \sum_{l=-1}^J\sum_{r\in R_l}\frac{|\theta_{0,lr}|^p}{\sigma_l^p} = \sum_{l=-1}^J\sum_{r\in R_l} N\varepsilon_N^22^{pl(\alpha+\frac{d}{2}-\frac{d}{p})}|\theta_{0,lr}|^p = \norm{\theta_0}_{B^{\alpha}_{pp}(\mathcal{Z})}^pN\varepsilon_N^2\lesssim N\varepsilon_N^2.
\end{equation*}
In conclusion, 
\begin{equation*}
    \frac{1}{N}D(Q_N\Vert \Pi_N) \leq \frac{1}{N}\sum_{l=-1}^J\sum_{r\in R_l} D(Exp(q;\theta_{0,lr},\tau)\Vert Exp(p;0,\sigma_l))\lesssim \varepsilon_N^2\log N.
\end{equation*}

Next, we give an upper bound of $Q_N[D(P_{\theta_0}\Vert P_{\theta})]$. We assume independent random variables $\theta_{lr} = \theta_{0,lr} + \tau Z_{lr}$ where $Z_{lr} \sim Exp(q;0,1)$ for $r \in R_l, l \in \{-1,0,\dots,J\}$. Using Proposition \ref{le2.1} and the condition (\ref{lip}), we have 
    \begin{align*}
        Q_ND(P_{\theta_0}\Vert P_{\theta}) = &\frac{1}{2}Q_N\norm{\mathcal{G}(\theta)-\mathcal{G}(\theta_0)}^2_{L^2_{\lambda}(\mathcal{X},V)} \\
        \lesssim & Q_N[(1+\norm{\theta}_{B_{pp}^{\alpha'}(\mathcal{Z})}^{2l})\norm{\theta-\theta_0}^2_{(H^{{\kappa}}(\mathcal{Z}))^*}]
        \\
        \lesssim & Q_N[(1+\norm{\theta}_{B_{\gamma\gamma}^{\beta'}(\mathcal{Z})}^{2l})\norm{\theta-\theta_0}^2_{(H^{{\kappa}}(\mathcal{Z}))^*}],
    \end{align*}     
for $\beta'<\alpha-d/p$ and large enough $\gamma\geq 2l$ such that $\beta'>\alpha'+d/\gamma$.
Because $Q_N$ is defined by the law of
$\sum_{l=-1}^J\sum_{r\in R_l}\theta_{lr}\chi\psi_{lr}, \quad \theta_{lr}\sim Exp(q;\theta_{0,lr},\tau),$
we have
\begin{equation}\label{thm3.8_1}
    \begin{aligned}
        Q_ND(P_{\theta_0}\Vert P_{\theta})
        \lesssim &E \big[\big(\norm{\theta_0}^{2l}_{B_{\gamma\gamma}^{\beta'}(\mathcal{Z})}+\norm{\sum_{l=-1}^J\sum_{r\in R_l}\tau Z_{lr}\chi\psi_{lr}}^{2l}_{B_{\gamma\gamma}^{\beta'}(\mathcal{Z})}\big) \\ &
        \cdot\norm{\sum_{l=-1}^{+\infty}\sum_{r\in R_l}(\theta_{lr}-\theta_{0,lr})\chi\psi_{lr}}^2_{(H^{{\kappa}}(\mathcal{Z}))^*}\big],
        % \cdot\Big(\norm{\sum_{l=-1}^{J}\sum_{r\in R_l}bZ_{lr}\chi\psi_{lr}}^2_{(H^{{\kappa}}(\mathcal{Z}))^*}+\norm{\sum_{l=J+1}^{+\infty}\sum_{r\in R_l}bZ_{lr}\chi\psi_{lr}}^2_{(H^{{\kappa}}(\mathcal{Z}))^*}\Big),
    \end{aligned}   
\end{equation}
where $\theta_{lr} = 0$ for all $l>J$.
Using \eqref{Sobolevinter2} and wavelet characterization of Sobolev norms (see \cite[section 4]{gin2015mathematical}), we deduce that
\begin{equation*}
        \norm{\sum_{l=-1}^J\sum_{r\in R_l}\tau Z_{lr}\chi\psi_{lr}}_{B_{\gamma\gamma}^{\beta'}(\mathcal{Z})}^{2l} 
        \lesssim  1 + \sum_{l=-1}^J\sum_{r\in R_l}2^{\gamma l(\beta'+\frac{d}{2}-\frac{d}{\gamma})}\tau^{\gamma}|Z_{lr}|^{\gamma},
\end{equation*}
% \begin{equation*}
%     \begin{aligned}
%         \norm{\sum_{l=-1}^J\sum_{r\in R_l}\tau Z_{lr}\chi\psi_{lr}}_{B_{\gamma\gamma}^{\beta'}(\mathcal{Z})}^{2l} =& \norm{\chi\sum_{l=-1}^J\sum_{r\in R_l}\tau Z_{lr}\psi_{lr}}_{B_{\gamma\gamma}^{\beta'}(\mathbb{R}^d)}^{2l}\\
%         \lesssim & \norm{\sum_{l=-1}^J\sum_{r\in R_l}\tau Z_{lr}\psi_{lr}}_{B_{\gamma\gamma}^{\beta'}(\mathbb{R}^d)}^{2l} \\
%         \lesssim & \Big(\sum_{l=-1}^J\sum_{r\in R_l}2^{\gamma l(\beta'+\frac{d}{2}-\frac{d}{\gamma})}\tau^{\gamma}|Z_{lr}|^{\gamma}\Big)^{\frac{2l}{\gamma}}\\
%         \lesssim & 1 + \sum_{l=-1}^J\sum_{r\in R_l}2^{\gamma l(\beta'+\frac{d}{2}-\frac{d}{\gamma})}\tau^{\gamma}|Z_{lr}|^{\gamma},
%     \end{aligned}
% \end{equation*}
\begin{align*}
     \norm{\sum_{l=-1}^{+\infty}\sum_{r\in R_l}(\theta_{lr}-\theta_{0,lr})\chi\psi_{lr}}_{(H^{{\kappa}}(\mathcal{Z}))^*} 
     \lesssim \sqrt{\sum_{l=-1}^{+\infty}\sum_{r\in R_l}2^{-2l{\kappa}}(\theta_{lr}-\theta_{0,lr})^2}.
     \end{align*}
     % \begin{align*}
     % \norm{\sum_{l=-1}^{+\infty}\sum_{r\in R_l}(\theta_{lr}-\theta_{0,lr})\chi\psi_{lr}}_{(H^{{\kappa}}(\mathcal{Z}))^*} = &\sup_{\norm{\phi}_{H^{\kappa}(\mathcal{Z})}\leq 1}\pdt{\sum_{l=-1}^{+\infty}\sum_{r\in R_l}(\theta_{lr}-\theta_{0,lr})\psi_{lr}}{\chi\phi}_{L^2(\mathcal{Z})} \\
     % = &\sup_{\norm{\phi}_{H^{\kappa}(\mathcal{Z})}\leq 1}\pdt{\sum_{l=-1}^{+\infty}\sum_{r\in R_l}(\theta_{lr}-\theta_{0,lr})\psi_{lr}}{\chi\phi}_{L^2(\mathbb{R}^d)} \\
     % \lesssim &\sup_{\norm{\phi}_{H_c^{\kappa}(\mathcal{Z})}\leq 1}\pdt{\sum_{l=-1}^{+\infty}\sum_{r\in R_l}(\theta_{lr}-\theta_{0,lr})\psi_{lr}}{\phi}_{L^2(\mathbb{R}^d)}\\
     % \lesssim & \norm{\sum_{l=-1}^{+\infty}\sum_{r\in R_l}(\theta_{lr}-\theta_{0,lr})\psi_{lr}}_{H^{{-\kappa}}(\mathbb{R}^d)} \\
     % \lesssim & \sqrt{\sum_{l=-1}^{+\infty}\sum_{r\in R_l}2^{-2l{\kappa}}(\theta_{lr}-\theta_{0,lr})^2}.
     % \end{align*}
     Applying the last two inequalities to \eqref{thm3.8_1}, we obtain
    % \begin{align*}
    %     Q_ND(P_{\theta_0}\Vert P_{\theta})\lesssim& E\big(1 + \sum_{l=-1}^J\sum_{r\in R_l}2^{\gamma l(\beta'+\frac{d}{2}-\frac{d}{\gamma})}\tau^{\gamma}|Z_{lr}|^{\gamma}\big)\cdot\sum_{l=-1}^J\sum_{r\in R_l}2^{-2l{\kappa}}(\theta_{lr}-\theta_{0,lr})^2 \\&+E\big(1 + \sum_{l=-1}^J\sum_{r\in R_l}2^{\gamma l(\beta'+\frac{d}{2}-\frac{d}{\gamma})}\tau^{\gamma}|Z_{lr}|^{\gamma}\big)\cdot\sum_{l=J+1}^{+\infty}\sum_{r\in R_l}2^{-2l{\kappa}}\theta_{0,lr}^2\\
    %     \lesssim& E\big(1 + \sum_{l=-1}^J\sum_{r\in R_l}2^{\gamma l(\beta'+\frac{d}{2}-\frac{d}{\gamma})}\tau^{\gamma}|Z_{lr}|^{\gamma}\big)\cdot\sum_{l=-1}^J\sum_{r\in R_l}\tau^2 Z_{lr}^2
    %     \\&+E\big(1 + \sum_{l=-1}^J\sum_{r\in R_l}2^{\gamma l(\beta'+\frac{d}{2}-\frac{d}{\gamma})}\tau^{\gamma}|Z_{lr}|^{\gamma}\big)\cdot2^{-2J({\kappa}+\alpha)}\\
    %     \lesssim& E\big(1 + \sum_{l=-1}^J\sum_{r\in R_l}2^{\gamma l(\beta'+\frac{d}{2}-\frac{d}{\gamma})}\tau^{\gamma}|Z_{lr}|^{\gamma}\big)\cdot\sum_{l=-1}^J\sum_{r\in R_l}\tau^2 Z_{lr}^2\\ &+  E\big(1 + \sum_{l=-1}^J\sum_{r\in R_l}2^{\gamma l(\beta'+\frac{d}{2}-\frac{d}{\gamma})}\tau^{\gamma}|Z_{lr}|^{\gamma}\big)\cdot2^{-2J({\kappa}+\alpha)}.
    % \end{align*}
    \begin{align*}
        Q_ND(P_{\theta_0}\Vert P_{\theta})
        \lesssim& E\big(1 + \sum_{l=-1}^J\sum_{r\in R_l}2^{\gamma l(\beta'+\frac{d}{2}-\frac{d}{\gamma})}\tau^{\gamma}|Z_{lr}|^{\gamma}\big)\cdot\sum_{l=-1}^J\sum_{r\in R_l}\tau^2 Z_{lr}^2\\ &+  E\big(1 + \sum_{l=-1}^J\sum_{r\in R_l}2^{\gamma l(\beta'+\frac{d}{2}-\frac{d}{\gamma})}\tau^{\gamma}|Z_{lr}|^{\gamma}\big)\cdot2^{-2J({\kappa}+\alpha)}.
    \end{align*}
Provided that
\begin{equation}\label{Eb1}
    E\sum_{l=-1}^J\sum_{r\in R_l}2^{\gamma l(\beta'+\frac{d}{2}-\frac{d}{\gamma})}\tau^{\gamma}|Z_{lr}|^{\gamma} = O(1),
\end{equation}
\begin{equation}\label{Eb2}
    EZ_{\tilde{l}\tilde{r}}^2\sum_{l=-1}^J\sum_{r\in R_l}2^{\gamma l(\beta'+\frac{d}{2}-\frac{d}{\gamma})}\tau^{\gamma}|Z_{lr}|^{\gamma} = O(1),\quad \mbox{for} \ \tilde{r}\in R_{\tilde{l}},\ \tilde{l} \in \{-1,0,\dots,J\},
\end{equation}
then, since $2^{-J({\kappa}+\alpha)} \simeq \varepsilon_N$ and $d_J \leq c_0 2^{Jd}$, we have
\begin{equation*}
    \begin{aligned}
        Q_ND(P_{\theta_0}\Vert P_{\theta})&\lesssim E\big(1 + \sum_{l=-1}^J\sum_{r\in R_l}2^{\gamma l(\beta'+\frac{d}{2}-\frac{d}{\gamma})}\tau^{\gamma}|Z_{lr}|^{\gamma}\big)\cdot\left(\sum_{l=-1}^J\sum_{r\in R_l}\tau^2 Z_{lr}^2 + 2^{-2J({\kappa}+\alpha)}\right)\\
        &\lesssim \tau^2d_J + 2^{-2J({\kappa}+\alpha)}\lesssim \varepsilon_N^2.
    \end{aligned}
\end{equation*}

Next, we verify (\ref{Eb1}) and (\ref{Eb2}) to complete the proof. For (\ref{Eb1}),
    \begin{align*}
        E\sum_{l=-1}^J\sum_{r\in R_l}2^{\gamma l(\beta'+\frac{d}{2}-\frac{d}{\gamma})}b^{\gamma}|Z_{lr}|^{\gamma} &\lesssim \sum_{l=-1}^J\sum_{r\in R_l}2^{\gamma l(\beta' - \alpha +\frac{d}{p}-\frac{d}{\gamma})}E|Z_{lr}|^{\gamma}\\
        & \lesssim \sum_{l=-1}^J2^{\gamma l(\beta' - \alpha +\frac{d}{p})}\lesssim 2^{\gamma(\alpha-\beta' -\frac{d}{p})}.
    \end{align*}
For (\ref{Eb2}),  
we split this expectation into to two parts:
\begin{equation*}
    E2^{\gamma \tilde{l}(\beta'+\frac{d}{2}-\frac{d}{\gamma})}\tau^{\gamma}|Z_{\tilde{l}\tilde{r}}|^{\gamma}Z_{\tilde{l}\tilde{r}}^2\cdot E\sum_{(l,r)\neq (\tilde{l},\tilde{r})}2^{\gamma l(\beta'+\frac{d}{2}-\frac{d}{\gamma})}\tau^{\gamma}|Z_{lr}|^{\gamma}.
\end{equation*}
The second part can be bounded by a constant, as indicated by equation (\ref{Eb1}). For the first part, we have
\begin{equation*}
        E2^{\gamma \tilde{l}(\beta'+\frac{d}{2}-\frac{d}{\gamma})}\tau^{\gamma}|Z_{\tilde{l}\tilde{r}}|^{\gamma}Z_{\tilde{l}\tilde{r}}^2 \lesssim 2^{\gamma \tilde{l}(\beta' - \alpha +\frac{d}{p}-\frac{d}{\gamma})}E|Z_{\tilde{l}\tilde{r}}|^{\gamma+2}\lesssim 2^{-\gamma(\beta' - \alpha +\frac{d}{p}-\frac{d}{\gamma})}. 
\end{equation*}
Therefore, formula (\ref{Eb2}) is verified.
\end{proof}
\subsection{Contraction rates of the Darcy flow problem}
In this subsection, we will prove Theorem 4.1 by verifying Conditions 3.1 and 3.2 using results from our work \cite{zu2024consistencyvariationalbayesianinference}. We will provide regularity and conditional stability estimates for the forward map \( \mathcal{G}(\theta) \) as defined in (4.3) in the proof of Theorem 4.1.
\begin{proof}[Proof of Theorem 4.1]
We will verify Conditions 3.1 and 3.2 for the forward map \( \mathcal{G} \) of problem (4.1) with \( \mathcal{R} = B^{b}_{pp} \) for some $b<\alpha-d/p$. From Lemma B.2 in \cite{zu2024consistencyvariationalbayesianinference}, we have
\[\mathop{\sup}_{\theta \in B_{H^t}(M)}\Vert u_{f_{\theta}} \Vert_{H^{t+1}} \leq CM^{t^3+t^2}\]
for $2<t-d/2<\alpha-2d/p$. The above inequality, combined with the Sobolev embedding \( H^2 \subset C^0, H^t\subset B^b_{pp} \) for $b-d/p>t-d/2$, implies that
\[\mathop{\sup}_{\theta \in B_{\mathcal{R}}(M)}\mathop{\sup}_{x \in \mathcal{X}}\vert\mathcal{G}(\theta)(x)\vert \leq CM^{\mu}.\]
Therefore, we have condition (3.1) verified with $\mu = t^3 + t^2$.
For $\theta_1,\theta_2 \in H^{s}$ with $2<s-d/2<\alpha-2d/p$, Lemma B.3 in \cite{zu2024consistencyvariationalbayesianinference} implies
\[\Vert \mathcal{G}(\theta_1)-\mathcal{G}(\theta_2)\Vert _{L^2} \leq C (1+\norm{\theta_1}_{C^1}^3\vee\norm{\theta_2}_{C^1}^3)\Vert \theta_1-\theta_2\Vert _{(H^1)^*}.\]
Then, by Sobolev embedding $H^s \subset B^b_{pp}$ with $b-d/p>s-d/2$,
\[\Vert \mathcal{G}(\theta_1)-\mathcal{G}(\theta_2)\Vert _{L^2} \leq C (1+\norm{\theta_1}_{\mathcal{R}}^3\vee\norm{\theta_2}_{\mathcal{R}}^3)\Vert \theta_1-\theta_2\Vert _{(H^1)^*}\]
which verifies condition (3.2) with ${\kappa}=1$ and $l=3$.
For condition (3.3), Lemma B.4 in \cite{zu2024consistencyvariationalbayesianinference} implies that
\[\Vert f_{\theta} - f_{\theta_0}\Vert _{L^2}^{\frac{s+1}{s-1}} \leq C M^{\frac{(2s^2+1)(s+1)}{s-1}}\Vert u_f-u_{f_0}\Vert _{L^2}\]
for $\theta \in B_{\mathcal{R}}(M)$.

Given our requirement on \( \alpha \), we have
\[\alpha + {\kappa} \geq 4d/p = {d(l+1)}/{p}.
\]
In conclusion, using Theorem \ref{finalthm} with the conditions verified above, we have proved Theorem \ref{mainthmDarcy}
\end{proof}
\subsection{Contraction rates of the Inverse potential problem for a subdiffusion equation}
Using the regularity and stability estimates presented in our work \cite{zu2024consistencyvariationalbayesianinference}, we now aim to prove Theorem 4.3.
\begin{proof}[Proof of Theorem 4.3]
We will verify the Conditions 3.1 and 3.2 for the forward map $\mathcal{G}$ as defined in (4.9) 
with $\mathcal{R} = B^{b}_{pp}$ for some $b<\alpha-d/p$.
From Lemma B.8 in \cite{zu2024consistencyvariationalbayesianinference}, for $b>d/p$, we have
\[\mathop{\sup}_{\theta \in B_{\mathcal{R}}(M)}\Vert u_{q_{\theta}}(T) \Vert_{H^{2}} \leq c(1+T^{-\beta}).\]
This inequality, combined with the Sobolev embedding $H^2 \subset C$, imply that
\[\mathop{\sup}_{\theta \in B_{\mathcal{R}}(M)}\mathop{\sup}_{x \in \Omega}\vert\mathcal{G}(\theta)(x)\vert \leq c(1+T^{-\beta}).\]
Therefore, we have verified condition (3.1) with $\mu = 0$.
Lemma B.7 in \cite{zu2024consistencyvariationalbayesianinference}, in conjunction with Lemma 29 in \cite{IntroNonLinear_nickl2020convergence} and Sobolev embedding theorem, implies that
\[\lVert \mathcal{G}(\theta_1)-\mathcal{G}(\theta_2)\rVert _{L^2} \leq c(1+T^{-\beta})(1+\norm{\theta_1}_{\mathcal{R}}^2\vee\norm{\theta_2}_{\mathcal{R}}^2)\lVert \theta_1-\theta_2\rVert _{(H_0^2(\Omega))^*}\]
for $\theta_1,\theta_2 \in \mathcal{R}$ with $b>2+d/p$, which verifies condition (3.2) with ${\kappa}=2$, $l=2$.
For condition (3.3), Lemma B.9 in \cite{zu2024consistencyvariationalbayesianinference}, combined with Lemma 29 in \cite{IntroNonLinear_nickl2020convergence}, implies that, for any integer $s$ such that $0<s<\alpha+d/2-2d/p$ and $b-d/p>s-d/2$,
\[\Vert q_{\theta} - q_{0}\Vert_{L^2}^{\frac{2+s}{s}} \leq C M^{2+4s}\Vert \mathcal{G}(\theta)-\mathcal{G}(\theta_0)\Vert_{L^2},\quad \theta \in B_{\mathcal{R}}(M).\]
Given our requirement on \( \alpha \), we have
$\alpha + {\kappa} \geq {3} \geq {d(l+1)}/{p}.$
In conclusion, using Theorem \ref{finalthm} with conditions verified above, we have proved Theorem \ref{mainthmFrac}.
\end{proof}
Theorem \ref{Fracminmax} is a direct corollary of Theorem 4.4 in our work \cite{zu2024consistencyvariationalbayesianinference}.
\section{Proofs of the concentration of Besov priors}\label{sec:proofBesov}
\subsection{Some facts about the univariate p-exponential distribution}
\begin{lemma}\label{lem:moment}
    Let $\xi \sim Exp(p;0,b)$, $p\geq 1$ and $b>0$. Then, for any $t\geq0$, we have
    \[ E_{\xi}\abs{\xi}^{t} = p^{\frac{t}{p}}b^{t}\Gamma(\frac{t+1}{p})/\Gamma(\frac{1}{p}).\]
    Particularly, when $t=p$, notice that
    \[\mathbb{E}\abs{\xi}^{p} = b^p.\]
\end{lemma}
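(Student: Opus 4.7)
The plan is to prove this by direct integration against the density, reducing the integral to a Gamma function via a standard substitution.

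First, I would write the density as $f(x) = C_{p,b}^{-1}\exp(-|x|^p/(pb^p))$, where the normalizing constant
\[
C_{p,b} = \int_{-\infty}^{\infty} \exp\bigl(-|x|^p/(pb^p)\bigr)\, dx = 2\int_0^{\infty} \exp\bigl(-x^p/(pb^p)\bigr)\, dx
\]
by symmetry. Applying the substitution $u = x^p/(pb^p)$, so that $x = p^{1/p}b\, u^{1/p}$ and $dx = p^{1/p-1} b\, u^{1/p - 1}\, du$, yields
\[
C_{p,b} = 2 p^{1/p-1} b \int_0^{\infty} u^{1/p - 1} e^{-u}\, du = 2 p^{1/p-1} b\, \Gamma(1/p).
\]

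Next, I would compute $E_\xi|\xi|^t$ by the same substitution. Since
\[
E_\xi|\xi|^t = C_{p,b}^{-1}\cdot 2\int_0^{\infty} x^t \exp\bigl(-x^p/(pb^p)\bigr)\, dx,
\]
using $x^t = p^{t/p}b^t u^{t/p}$ we obtain
\[
E_\xi|\xi|^t = \frac{2 p^{(t+1)/p - 1} b^{t+1}\, \Gamma\bigl((t+1)/p\bigr)}{2 p^{1/p-1} b\, \Gamma(1/p)} = p^{t/p}\, b^{t}\, \frac{\Gamma\bigl((t+1)/p\bigr)}{\Gamma(1/p)},
\]
which is the claimed identity.

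For the special case $t = p$, I would use the functional equation $\Gamma((p+1)/p) = \Gamma(1 + 1/p) = (1/p)\Gamma(1/p)$ to obtain
\[
E_\xi|\xi|^p = p^{1}\, b^p \cdot \frac{(1/p)\Gamma(1/p)}{\Gamma(1/p)} = b^p.
\]
There is no real obstacle here; this is purely a change-of-variables calculation, and the only care required is keeping track of the factors of $p^{1/p}$ and $b$ appearing in the substitution.
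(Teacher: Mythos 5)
Your proof is correct and follows essentially the same route as the paper's: compute the normalizing constant $C_{p,b}$ via the substitution $u = x^p/(pb^p)$, apply the same substitution to the moment integral to reduce it to $\Gamma((t+1)/p)$, and invoke $\Gamma(1+1/p) = (1/p)\Gamma(1/p)$ for the case $t=p$. All the constants check out, so there is nothing to add.
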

\begin{proof}
    We first calculate the normalization constant $C_{p,b}$:
    \[C_{p,b} =  \int_{\mathbb{R}}\mexp{-\frac{\abs{x}^p}{pb^p}}dx = 2\int_{0}^{+\infty}\mexp{-\frac{x^p}{pb^p}}dx.\]
    Let $z:=\frac{x^p}{pb^p}$ and we deduce that
    \[C_{p,b} = 2p^{\frac{1}{p}-1}b\int_{0}^{+\infty} z^{\frac{1}{p}-1}e^{-z}dz = 2p^{\frac{1}{p}-1}\Gamma(\frac{1}{p})b,\]
    where $\Gamma(\cdot)$ is the Gamma function.
    Then, for $E_{\xi}\abs{\xi}^{t}$, we have
    \[C_{p,b}\cdot\mathbb{E}\abs{\xi}^{t} = \int_{\mathbb{R}}\abs{x}^t\mexp{-\frac{\abs{x}^p}{pb^p}}dx = 2\int_{0}^{+\infty}x^t\mexp{-\frac{x^p}{pb^p}}dx.\]   
    Let $z:=\frac{x^p}{pb^p}$ and we deduce that
    \[C_{p,b}\cdot\mathbb{E}\abs{\xi}^{t} = 2p^{\frac{t+1}{p}-1}b^{t+1}\int_{0}^{+\infty} z^{\frac{t+1}{p}-1}e^{-z}dx = 2p^{\frac{t+1}{p}-1}\Gamma(\frac{t+1}{p})b^{t+1}.\]
    Thus, we finally have
    \[\mathbb{E}\abs{\xi}^{t} = 2p^{\frac{t+1}{p}-1}\Gamma(\frac{t+1}{p})b^{t+1}/C_{p,b} = p^{\frac{t}{p}}b^{t}\Gamma(\frac{t+1}{p})/\Gamma(\frac{1}{p}).\]
    Particularly, when $t=p$, notice that
    \[\mathbb{E}\abs{\xi}^{p} =  pb^{p}\Gamma(1+\frac{1}{p})/\Gamma(\frac{1}{p}) = b^p,\]
    where we used the property of Gamma function that $\Gamma(1+\frac{1}{p}) = \frac{1}{p}\cdot\Gamma(\frac{1}{p})$.
\end{proof}
\subsection{Concentration of Besov priors}
\begin{lemma}\label{lem:concentration}
    Consider $\Pi'$ defined in Definition \ref{Def:BesovPrior}. Assume $\alpha > b+ \frac{d}{p}$ for some $b\geq0$. Then, there exist constants $c_1,c_2>0$ such that for any $r>0$
    \[\Pi'(\norm{\theta}_{B^b_{pp}(\mathbb{R}^d)}\geq r) \leq c_1e^{-c_2r^p}.\]
\end{lemma}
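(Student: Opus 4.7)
The plan is to use the wavelet characterization of the Besov norm to reduce the statement to a sub-exponential tail bound for a weighted sum of i.i.d.\ non-negative random variables. Since the only non-zero wavelet coefficients of $F$ in (\ref{Pi'}) occur at indices $(l,r)$ with $r\in R_l$, the standard Besov characterization (see e.g.\ Chapter 4 of \cite{gin2015mathematical}) gives
\[
\|F\|_{B^b_{pp}(\mathbb{R}^d)}^p \asymp X := \sum_{l\ge -1}\sum_{r\in R_l} w_{lr}\,|\xi_{lr}|^p, \qquad w_{lr} := 2^{-pl(\alpha-b)}.
\]
So it suffices to establish $\mathbb{P}(X\ge t)\le c_1 e^{-c_2 t}$ for all $t>0$ and then substitute $t\asymp r^p$.

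First I would record the two weight sums that will drive the Bernstein argument. Using $|R_l|\lesssim 2^{ld}$, the hypothesis $\alpha> b+d/p$ gives $d-p(\alpha-b)<0$, so
\[
S_k := \sum_{l,r} w_{lr}^k \lesssim \sum_{l\ge -1} 2^{l(d-kp(\alpha-b))} < \infty, \qquad k=1,2,
\]
and moreover $W_{\max}:=\sup_{l,r}w_{lr}=w_{-1,\cdot}$ is finite. Next, a change of variables shows that each $Y_{lr}:=|\xi_{lr}|^p$ follows a $\mathrm{Gamma}(1/p,1/p)$ distribution with moment generating function $\mathbb{E}\,e^{\lambda Y_{lr}}=(1-p\lambda)^{-1/p}$ for $\lambda<1/p$. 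Taylor expanding the logarithm yields
\[
-\tfrac{1}{p}\log(1-p\lambda w_{lr}) \le \lambda w_{lr} + C\lambda^2 w_{lr}^2 \qquad \text{whenever } \lambda w_{lr}\le \tfrac{1}{2p},
\]
for an absolute constant $C$. Since $w_{lr}\le W_{\max}$, this holds uniformly in $(l,r)$ provided $\lambda\le \lambda_0 := 1/(2pW_{\max})$.

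Multiplying over the independent coordinates and applying the Chernoff inequality,
\[
\log \mathbb{E}\,e^{\lambda X} \le \lambda S_1 + C\lambda^2 S_2, \qquad \mathbb{P}(X\ge t)\le \exp\bigl(-\lambda t + \lambda S_1 + C\lambda^2 S_2\bigr),
\]
valid for $\lambda\in(0,\lambda_0]$. Optimizing with $\lambda=\min\{\lambda_0,\,(t-2S_1)/(2CS_2)\}$ for $t\ge 2S_1$ yields a sub-exponential tail $\mathbb{P}(X\ge t)\le e^{-c_2 t}$; for $t<2S_1$ the estimate is trivial after enlarging $c_1$. Combining this with the wavelet norm equivalence converts $\mathbb{P}(\|F\|_{B^b_{pp}}\ge r)\le \mathbb{P}(X\ge c r^p)$ into the stated bound.

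The main technical point is the uniform Taylor control over all scales, which relies on the boundedness of the weights $w_{lr}$ (this uses $\alpha>b$, weaker than our hypothesis) combined with the summability $S_1,S_2<\infty$ (which uses the full strength $\alpha>b+d/p$). Apart from this, the argument is a standard Bernstein/Chernoff calculation for heavy-tailed coordinates, and the reduction via the wavelet characterization handles the non-Hilbertian case $p\ne 2$ in one step.
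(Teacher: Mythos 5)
Your proof is correct and takes essentially the same route as the paper: both arguments reduce the tail bound to controlling the moment generating function of $X=\sum_{l,r}w_{lr}\lvert\xi_{lr}\rvert^{p}$ (exploiting that each $\lvert\xi_{lr}\rvert^{p}$ is Gamma$(1/p,1/p)$ with MGF $(1-p\lambda)^{-1/p}$, and that the weights $w_{lr}=2^{-pl(\alpha-b)}$ give a summable geometric series under $\alpha>b+d/p$), and then apply a Chernoff/Markov step. The only stylistic difference is that the paper simply fixes a small exponent $c_2$ with $c_2\,p\,2^{p(\alpha-b)}\le\tfrac12$ and shows $E\exp(c_2 X)<\infty$ directly from the product formula, whereas you expand the log-MGF with a Bernstein-type quadratic bound and optimize over $\lambda\in(0,\lambda_0]$; this is a mild elaboration that buys nothing extra for the stated exponential-in-$r^p$ tail, since the constraint $\lambda\le\lambda_0$ caps the decay rate at $e^{-\lambda_0 t}$ in any case.
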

\begin{proof}
    Provided that there exists a constant $c_2$ such that 
    \begin{equation} \label{lem:concentration:1}
        E_{\Pi'}[\exp(c_2\norm{\theta}^p_{B^b_{pp}(\mathbb{R}^d)})] < \infty,
    \end{equation}
    Markov inequality implies that
    \begin{equation*}
        \Pi'(\norm{\theta}_{B^b_{pp}(\mathbb{R}^d)}\geq r) \leq E_{\Pi'}[\exp(c_2\norm{\theta}^p_{B^b_{pp}(\mathbb{R}^d)})]e^{-c_2r^p}.
    \end{equation*}
    Then, we are going to prove \eqref{lem:concentration:1} to complete the proof. By the definition of $\Pi'$ and wavelet characterization of Sobolev norms, we deduce that 
    \begin{align*}
       E_{\Pi'}[\exp(c_2\norm{\theta}^p_{B^b_{pp}(\mathbb{R}^d)})] =  E\mexp{c_2\sum_{l=-1}^{+\infty}\sum_{r\in R_l}2^{-pl(\alpha-b)}\abs{\xi_{lr}}^p}.
    \end{align*}
    Combining with the property of p-exponential distributions, let $c_2p2^{p(\alpha-b)}\leq \frac{1}{2}$ and we have
    \begin{align*}
       E_{\Pi'}[\exp(c_2\norm{\theta}^p_{B^b_{pp}(\mathbb{R}^d)})] =  &\prod_{l=-1}^{+\infty}\prod_{r\in R_l} E\mexp{c_22^{-pl(\alpha-b)}\abs{\xi_{lr}}^p}\\
       =&\prod_{l=-1}^{+\infty}\prod_{r\in R_l}\big(1-c_2p2^{-pl(\alpha-b)}\big)^{-\frac{1}{p}} \\
       =&\prod_{l=-1}^{+\infty}\big(1-c_2p2^{-pl(\alpha-b)}\big)^{-\frac{c_02^{ld}}{p}}.
    \end{align*}
    Because 
    $(1-c_2p2^{-pl(\alpha-b)}\big)^{-\frac{2^{pl(\alpha-b)}}{c_2p}}\rightarrow e$ when $l\rightarrow +\infty,$
    which indicates that
    \[(1-c_2p2^{-pl(\alpha-b)}\big)^{-\frac{2^{pl(\alpha-b)}}{c_2p}}< C,\quad \forall \, l\geq -1,\]for some constant $C>0$, we further have
    \begin{align*}
       E_{\Pi'}[\exp(c_2\norm{\theta}^p_{B^b_{pp}(\mathbb{R}^d)})] \leq  &\prod_{l=-1}^{+\infty}C^{{c_0c_22^{l(d-p(\alpha-b))}}}
       = C^{\sum_{l=-1}^{+\infty}{c_0c_22^{l(d-p(\alpha-b))}}}\\
       \lesssim& C^{c_0c_22^{p(\alpha-b)-d}},
    \end{align*}
    where we also used $\alpha >b+\frac{d}{p}$.
\end{proof}
Next lemma improve the regularity of functions above using truncated prior $\Pi'_J$. 
\begin{lemma}\label{lem:concentrationJ}
    Consider $\Pi'_J$ defined in Definition \ref{Def:TrunBesovPrior}. Then, there exist constants $c$ such that for any $r>0$
    \[\Pi'_J(\norm{\theta}_{B^{\alpha}_{pp}(\mathbb{R}^d)}\geq r) \leq \exp\{-c(r^p-2^{Jd})\}.\]
\end{lemma}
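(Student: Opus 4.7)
The plan is to mimic the argument of Lemma~\ref{lem:concentration} but keep careful track of the constants since the product runs only over finitely many levels $l\leq J$. Using the wavelet characterization of Besov norms together with the definition of $F_J$ in \eqref{Pi'J}, I expect
\[
\norm{F_J}_{B^{\alpha}_{pp}(\mathbb{R}^d)}^{p}\;\simeq\;\sum_{l=-1}^{J}\sum_{r\in R_l} 2^{lp(\alpha+\frac{d}{2}-\frac{d}{p})}\bigl|2^{l(\frac{d}{p}-\frac{d}{2}-\alpha)}\xi_{lr}\bigr|^{p} \;=\;\sum_{l=-1}^{J}\sum_{r\in R_l}|\xi_{lr}|^{p},
\]
so the norm $\norm{F_J}_{B^{\alpha}_{pp}}^{p}$ reduces to a sum of $d_J:=\sum_{l=-1}^{J}|R_l|\simeq 2^{Jd}$ i.i.d.\ copies of $|\xi|^{p}$ with $\xi\sim Exp(p;0,1)$.

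Next I would apply an exponential Markov (Chernoff) bound. A direct computation using the same substitution $z=|x|^{p}/p$ as in the preceding lemma shows that for any $0<c<1/p$,
\[
E\bigl[\exp(c|\xi|^{p})\bigr]\;=\;\frac{1}{C_{p,1}}\int_{\mathbb{R}}\exp\Bigl(-\frac{(1-cp)}{p}|x|^{p}\Bigr)\,dx\;=\;(1-cp)^{-1/p},
\]
and by independence
\[
E_{\Pi'_J}\bigl[\exp(c\norm{\theta}_{B^{\alpha}_{pp}(\mathbb{R}^d)}^{p})\bigr]\;\leq\;(1-cp)^{-d_J/p}\;=\;\exp\bigl(-\tfrac{d_J}{p}\log(1-cp)\bigr).
\]
Markov's inequality then gives
\[
\Pi'_J\bigl(\norm{\theta}_{B^{\alpha}_{pp}(\mathbb{R}^d)}\geq r\bigr)\;\leq\;\exp\Bigl(-\tfrac{1}{p}\log(1-cp)\,d_J\,-\,c\,r^{p}\Bigr).
\]

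Finally, fixing any $c\in(0,1/p)$ and recalling $d_J\simeq 2^{Jd}$, the quantity $-\tfrac{1}{p}\log(1-cp)$ is a positive absolute constant, so by rescaling $c$ if necessary one obtains
\[
\Pi'_J\bigl(\norm{\theta}_{B^{\alpha}_{pp}(\mathbb{R}^d)}\geq r\bigr)\;\leq\;\exp\bigl\{-c\,(r^{p}-2^{Jd})\bigr\},
\]
which is the claimed bound. There is no real obstacle here; the only point to watch is that the equivalence constant in the wavelet characterization $\norm{F_J}_{B^{\alpha}_{pp}}^{p}\simeq\sum|\xi_{lr}|^{p}$ must be absorbed into the final constant $c$, and that $c$ must be chosen small enough both to keep $cp<1$ (so the moment generating function is finite) and to dominate the implicit constant in $d_J\lesssim 2^{Jd}$.
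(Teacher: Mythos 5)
Your proposal is correct and takes essentially the same approach as the paper: both reduce $\norm{F_J}_{B^\alpha_{pp}}^p$ to the sum $\sum_{l\leq J}\sum_{r\in R_l}|\xi_{lr}|^p$ via the wavelet characterization, compute the moment generating function $E[\exp(c|\xi|^p)]=(1-cp)^{-1/p}$ for $cp<1$, exploit $d_J\simeq 2^{Jd}$, and finish with Markov's inequality. (The paper fixes $c'$ so that $1-c'p\geq 1/e$ and bounds $(1-c'p)^{-1/p}\leq e^{1/p}$ rather than carrying $\log(1-cp)$ explicitly, but these are the same estimate.)
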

\begin{proof}
    Combining with the property of p-exponential distributions, let $1-c'p\geq \frac{1}{e}$ and we have
    \begin{align*}
       E_{\Pi'_J}[\exp(c'\norm{\theta}^p_{B^{\alpha}_{pp}(\mathbb{R}^d)})] =  &\prod_{l=-1}^{J}\prod_{r\in R_l} E\mexp{c'\abs{\xi_{lr}}^p}
       =\prod_{l=-1}^{J}\prod_{r\in R_l}\big(1-c'p\big)^{-\frac{1}{p}} \\
       \leq&\prod_{l=-1}^{J}\mexp{\frac{c_02^{ld}}{p}}
       \leq\mexp{c''2^{Jd}}
    \end{align*}
    Then, the Markov inequality implies that
    \begin{align*}
        \Pi'_J(\norm{\theta}_{B^{\alpha}_{pp}(\mathbb{R}^d)}\geq r) &\leq E_{\Pi'_J}[\exp(c'\norm{\theta}^p_{B^{\alpha}_{pp}(\mathbb{R}^d)})]e^{-c'r^p}\\
        &\leq \mexp{-c'r^p+c''2^{Jd}} \leq \mexp{-c(r^p-2^{Jd})}.
    \end{align*}
\end{proof}
\par
The proof of the next lemma on small ball probability of $\Pi'$ is similar to that of Lemma 6.3 in \cite{agapiou2024laplace}.
\begin{lemma}\label{lem:smallball}
    Consider $\Pi'$ defined in Definition \ref{Def:BesovPrior}. Let $\kappa\geq 0$. Then, there exist a constant $C>0$ such that for any $0<r<1$,
    \[-\log\Pi'(\norm{\theta}_{(H^{\kappa}(\mathbb{R}^d))^*}\leq r) \leq Cr^{-\frac{pd}{p(\alpha+\kappa)-d}}.\]
\end{lemma}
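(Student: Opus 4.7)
The plan is to reduce the small ball estimate to a product computation for the i.i.d.\ p-exponential coefficients via the wavelet characterisation of the dual Sobolev norm. Writing the wavelet coefficients of $F$ as $f_{lr}=2^{l(d/p-d/2-\alpha)}\xi_{lr}$, one has the equivalence $\|F\|_{(H^{\kappa}(\mathbb{R}^d))^*}^2 \simeq \sum_{l\geq -1}\sum_{r\in R_l} 2^{-2l(\alpha+\kappa+d/2-d/p)} \xi_{lr}^2$. Setting $\gamma:=\alpha+\kappa-d/p>0$, I would fix the critical level $J\in\mathbb{N}$ with $2^J\simeq r^{-1/\gamma}$, chosen so that the tail contribution $\sum_{l>J} 2^{-2l\gamma}$ is of order $r^2$. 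By independence across levels, $\Pi'(\|F\|_{(H^\kappa)^*}\leq r) \geq \Pi'(\|F_{>J}\|_{(H^\kappa)^*}^2\leq r^2/2)\cdot \Pi'(\|F_{\leq J}\|_{(H^\kappa)^*}^2\leq r^2/2)$, and the first factor is bounded below by an absolute constant by a direct Markov argument using $E\|F_{>J}\|_{(H^\kappa)^*}^2\lesssim 2^{-2J\gamma}\lesssim r^2$.

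The main work is the lower bound for the low-frequency factor, and the delicate point is to use a scale-adapted rather than uniform budget across levels. Fix a small $\eta\in(0,\gamma)$ and allocate $c_l^2:=c_0\,r^2\,2^{-2(J-l)\eta}$, with $c_0$ chosen so that $\sum_{l\leq J}c_l^2\leq r^2/2$. It then suffices to impose $\sum_{r\in R_l}\xi_{lr}^2\leq M_l:=c_l^2\cdot 2^{2l(\alpha+\kappa+d/2-d/p)}$ at each level $l\leq J$. A short calculation gives $M_l/N_l\simeq 2^{-2(J-l)(\gamma+\eta)}$, where $N_l=|R_l|\simeq 2^{ld}$. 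For the (bounded number of) levels near $l=J$ where $M_l\gtrsim N_l$, a Markov bound on the $\chi_p^2$-type sum yields $O(1)$ log-probability; for the remaining levels I would use the sufficient event $\{\xi_{lr}^2\leq M_l/N_l\ \forall r\in R_l\}$ together with the elementary lower bound $P(|\xi|\leq \delta)\geq c\,\delta$ for $\delta\in(0,1]$, valid because the p-exponential density is bounded below near the origin.

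This yields the per-level bound $-\log \Pi'(\sum_{r\in R_l}\xi_{lr}^2\leq M_l)\lesssim N_l(1+\log(N_l/M_l))\lesssim 2^{ld}(J-l+1)$ for $l<J$, and the crucial final observation is that the sum $\sum_{l\leq J}2^{ld}(J-l+1)$ is a geometric series dominated by its top-level term, giving $O(2^{Jd})$. Combining, $-\log\Pi'(\|F\|_{(H^\kappa)^*}\leq r)\lesssim 2^{Jd}\lesssim r^{-d/\gamma}=r^{-pd/(p(\alpha+\kappa)-d)}$, which is the stated bound. The main obstacle is precisely the design of the allocation $c_l$: a naive uniform split $c_l^2\simeq r^2/(J+1)$ produces a spurious $\log(1/r)$ factor, and it is essential that the geometric allocation concentrate the budget near $l=J$, where each coefficient's individual constraint is mild enough for the logarithmic penalty to disappear under summation.
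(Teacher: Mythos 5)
Your proof is correct but follows a genuinely different, more elementary route than the paper's. The paper reduces the infinite-dimensional small ball event to a one-dimensional weighted series $\sum_{l}(l^{-\mu}\xi_l)^2$ with $\mu=(\alpha+\kappa)/d+1/2-1/p$ via a reduction quoted from the supplementary material of Agapiou et al.\ (2024), and then invokes a known sharp small-deviation asymptotic for weighted sums of i.i.d.\ $p$-exponential squares (cited as Theorem 4.2 in the paper's reference [7]); the power $r^{-1/(\mu-1/2)}=r^{-pd/(p(\alpha+\kappa)-d)}$ then drops out from the exponent in that limit theorem. Your argument is instead a self-contained prior-mass computation in wavelet coordinates: identify the critical scale $J$ with $2^{Jd}\simeq r^{-pd/(p(\alpha+\kappa)-d)}$, dispose of the tail $l>J$ by Markov's inequality (using only $\gamma:=\alpha+\kappa-d/p>0$, i.e.\ $\alpha>d/p$), and handle the bulk $l\leq J$ by imposing per-coefficient constraints with a geometrically tapered budget $c_l^2=c_0r^2\,2^{-2(J-l)\eta}$, $0<\eta<\gamma$, so that the per-level log penalty $N_l\log(N_l/M_l)\lesssim 2^{ld}(J-l+1)$ sums to a geometric series dominated by its top term $2^{Jd}$. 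The paper's route is shorter modulo the citation and in principle yields the sharp asymptotic constant; yours is more transparent, exhibits exactly where the entropy exponent $d/\gamma$ comes from, and does not lean on external small-deviation theory. Two minor remarks: for small $c_0$ one has $M_l<N_l$ at every level $l\leq J$, so the ``Markov branch near $l=J$'' you mention is vacuous and the pointwise argument suffices throughout (harmless, just unnecessary); and a naive uniform split $c_l^2\simeq r^2/J$ would in fact produce a spurious $\log\log(1/r)$ factor rather than $\log(1/r)$, since it only shrinks each $M_l/N_l$ by an extra $1/J$ — but your central observation, that a geometric allocation concentrating the budget near $l=J$ is what kills the logarithmic penalty, is exactly right.
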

\begin{proof}
     By the definition of $\Pi'$ and display (B.3) of the Supplement \cite{agapiou2024laplaceSupp}, for some $c>0$ we have
     \[\Pi'(\norm{\theta}_{(H^{\kappa}(\mathbb{R}^d))^*}\leq r) \geq \mathbb{P}\Big(\sum_{l=-1}^{+\infty}(l^{\frac{1}{d}(-\alpha-\kappa-\frac{d}{2}+\frac{d}{p})}\xi_l)^2\leq cr^2\Big),\quad \xi_l \sim Exp(p;0,1).\]
     Then, Theorem 4.2 in \cite{aurzada2007lower} (with $\mu=\frac{\alpha+\kappa}{d}+\frac{1}{2}-\frac{1}{p}$ and $p=2$ which is different from our $p$) implies that
     \[\lim_{r\rightarrow 0^{+}} -r^{1/(\mu-1/2)}\mathbb{P}\Big(\sum_{l=-1}^{+\infty}(l^{-\mu}\xi_l)^2\leq cr^2\Big) = \tilde{C},\]
     for some constant $\tilde{C}>0$. Thus, there exist a constant $r_0>0$ such that
     for any $0<r\leq r_0$,
     \[
-\log\mathbb{P}\Big(\sum_{l=-1}^{+\infty}(l^{-\mu}\xi_l)^2\leq cr^2\Big) \leq 2\tilde{C}r^{-1/(\mu-1/2)} = 2\tilde{C}r^{-\frac{pd}{p(\alpha+\kappa)-d}}.\]
     For $r>r_0$, we have
     \[
-\log\mathbb{P}\Big(\sum_{l=-1}^{+\infty}(l^{-\mu}\xi_l)^2\leq cr^2\Big) \leq -\log\mathbb{P}\Big(\sum_{l=-1}^{+\infty}(l^{-\mu}\xi_l)^2\leq cr_0^2\Big) \leq 2\tilde{C}r_0^{-\frac{pd}{p(\alpha+\kappa)-d}}.\]
     In conclusion, let $C=2\tilde{C}(1\vee r_0^{-\frac{pd}{p(\alpha+\kappa)-d}})$, we deduce that
     \[-\log\Pi'(\norm{\theta}_{(H^{\kappa}(\mathbb{R}^d))^*}\leq r) \leq Cr^{-\frac{pd}{p(\alpha+\kappa)-d}},\]
     for any $0<r<1$.
\end{proof}
The next lemma is derived from the proofs of Lemma 6.4 in \cite{agapiou2024laplace} and Proposition 2.11 in \cite{agapiou2021rates}.
\begin{lemma}\label{lem:decenter}
    Consider $\Pi'$  defined in Definition \ref{Def:BesovPrior}. Then,for $p\in [1,2]$, any $h \in B_{pp}^{\alpha}(\mathbb{R}^d)$ supported on $K$ and any symmetric convex Borel-measurable $A\subset L_2(\mathbb{R}^d)$, it holds
    \[\Pi'(h+A)\geq e^{-\frac{1}{p}\norm{h}^p_{B_{pp}^{\alpha}}}\Pi'(A).\] 
\end{lemma}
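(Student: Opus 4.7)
The plan is to reduce the claim to a shift inequality for a product measure of one-dimensional p-exponentials on sequence space, and then to exploit the pointwise inequality from Proposition 2.11 of \cite{agapiou2021rates}, namely $|x+a|^p + |x-a|^p - 2|x|^p \leq 2|a|^p$ for $p \in [1,2]$.

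First I would use the wavelet characterization of $B_{pp}^{\alpha}$. Writing $h = \sum_{l,r} h_{lr}\psi_{lr}$, the norm satisfies $\|h\|_{B_{pp}^{\alpha}}^p \simeq \sum_{l,r} 2^{pl(\alpha + d/2 - d/p)}|h_{lr}|^p = \sum_{l,r} (|h_{lr}|/\sigma_l)^p$ with $\sigma_l = 2^{l(d/p - d/2 - \alpha)}$. Under the wavelet expansion map $\Psi$, $\Pi'$ is the image of the product measure $\tilde{\Pi} = \bigotimes_{l,r} Exp(p;0,\sigma_l)$, and the shift by $h$ corresponds coordinatewise to the shift by $(h_{lr})$. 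Since $\Psi$ is a linear isometry from $\ell^2$ onto a subspace of $L^2$, the pullback $\tilde{A} = \Psi^{-1}(A)$ of a symmetric convex Borel set $A \subset L^2$ is itself symmetric convex in sequence space. Thus it suffices to prove the inequality for $\tilde{\Pi}$, $\tilde{h} = (h_{lr})$ and $\tilde{A}$.

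Next I would reduce to finite dimensions by approximating $\tilde{A}$ by cylinders $\tilde{A}_n$ (projection to the first $n$ coordinates), prove the bound there, and pass to the limit using continuity of $\tilde{\Pi}$. In finite dimensions, the density is $f(x) = \prod_i C_p\sigma_i^{-1}\exp(-|x_i|^p/(p\sigma_i^p))$, and I would compute
\begin{equation*}
\tilde{\Pi}(\tilde{h}+\tilde{A}) + \tilde{\Pi}(-\tilde{h}+\tilde{A}) = \int_{\tilde{A}} \bigl[f(x-\tilde{h}) + f(x+\tilde{h})\bigr]\,dx.
\end{equation*}
Applying AM--GM gives $f(x-\tilde{h}) + f(x+\tilde{h}) \geq 2\sqrt{f(x-\tilde{h})f(x+\tilde{h})}$, and the exponents combine via the key inequality above to yield the lower bound $2 f(x) \exp(-\sum_i |h_i|^p/(p\sigma_i^p)) = 2 f(x) e^{-\|h\|_{B_{pp}^{\alpha}}^p/p}$. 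Finally, by symmetry of both the density and of $\tilde{A}$, a change of variables $y = -x$ shows $\tilde{\Pi}(-\tilde{h}+\tilde{A}) = \tilde{\Pi}(\tilde{h}+\tilde{A})$, so dividing by two delivers the claim.

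The main obstacle will be the infinite-dimensional limiting argument, since for $h \in B_{pp}^{\alpha}$ the shift is well defined at the sequence level but $h$ may lie outside the topological support of $\Pi'$; one must argue via cylinder sets, using that the full-coordinate product extends the finite-dimensional shift inequality, and that the $B_{pp}^{\alpha}$-norm controls the $\ell^p$-norm of $(h_{lr}/\sigma_l)$ uniformly in the truncation. Once the pointwise inequality $|x+a|^p + |x-a|^p \leq 2|x|^p + 2|a|^p$ is invoked, the constants involved are absolute, so the bound is stable under passage to the limit.
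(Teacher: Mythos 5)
Your proposal correctly identifies the key mechanism that the paper (via its citation to Proposition~2.11 of \cite{agapiou2021rates}) relies on: reduce to the coefficient sequence, pair $+h$ and $-h$ shifts, apply AM--GM to the product density, and invoke the pointwise inequality $|x+a|^p+|x-a|^p\le 2|x|^p+2|a|^p$ (valid precisely for $p\in[1,2]$, by concavity of $t\mapsto t^{p/2}$ followed by subadditivity of $t\mapsto t^{p/2}$), then use symmetry of $A$ to conclude $\tilde\Pi(\tilde h+\tilde A)=\tilde\Pi(-\tilde h+\tilde A)$. The finite-dimensional calculation is correct and yields exactly the constant $e^{-\frac{1}{p}\|h\|_{B_{pp}^\alpha}^p}$ since $\sum_{l,r}|h_{lr}|^p/\sigma_l^p=\|h\|_{B_{pp}^\alpha}^p$ with $\sigma_l=2^{l(d/p-d/2-\alpha)}$. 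Note, incidentally, that your argument never uses convexity of $A$, only symmetry --- this is a mild strengthening over the stated hypotheses and is consistent with what the same pairing argument gives in the Gaussian case.

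The one place where the proposal is loose is the extension from cylinder sets to general symmetric Borel $A$ and from finitely supported $\tilde h$ to general $\tilde h\in B_{pp}^\alpha$. Taking ``projections to the first $n$ coordinates'' of $\tilde A$ does not produce an increasing or decreasing sequence whose measures converge to $\tilde\Pi(\tilde A)$, and sections $\tilde A(y)=\{x_{\le n}:(x_{\le n},y)\in\tilde A\}$ are neither symmetric nor convex for fixed tail $y$, so the most naive slicing fails. A clean way to close the gap is the one the cited reference actually uses and which the paper's own proof of Lemma~\ref{lem:decenterJ} mirrors in the truncated case: express $\Pi'(h+A)$ and $\Pi'(-h+A)$ as integrals over $A$ of the Radon--Nikodym derivatives $d\Pi'_{\pm h}/d\Pi'$ (these exist for $\tilde h$ with $\sum_i|h_i/\sigma_i|^p<\infty$ by Kakutani's criterion), then apply the pointwise inequality and AM--GM directly to the exponent $\sum_i\bigl(V(\theta_i/\sigma_i)-V((\theta_i\mp h_i)/\sigma_i)\bigr)$ with $V(x)=|x|^p/p$, which converges a.s.\ and in $L^1(\Pi')$. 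Alternatively, one can first prove the inequality for finitely supported $\tilde h$ (where the conditional-measure decomposition works coordinate by coordinate without needing symmetry of the slices, only of $\tilde A$ as a whole), note the bound is uniform in the truncation because partial sums of $\sum|h_i/\sigma_i|^p$ are bounded by the total, and then justify $\tilde\Pi(\tilde h^{(n)}+\tilde A)\to\tilde\Pi(\tilde h+\tilde A)$; this last continuity step is where the convexity hypothesis is typically invoked (convex sets have $\Pi'$-null boundary), so dropping convexity in the final statement is not as free as the finite-dimensional AM--GM step suggests. Either route would make the proof complete; as written, the limiting step is asserted rather than carried out.
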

\par
For any $p\geq 1$, Lemma \ref{lem:decenterJ} gives a similar result to Lemma \ref{lem:decenter} but restricted to the finite-dimensional prior $\Pi'_J$. 
\begin{lemma}\label{lem:decenterJ}
    Consider $\Pi'_J$ defined in Definition \ref{Def:TrunBesovPrior}. Then, for $p\geq 1$, any $h \in B_{pp}^{\alpha}(\mathbb{R}^d)$ supported on $K$ and any symmetric convex Borel-measurable $A\subset L_2(\mathbb{R}^d)$, there exists a constant $c>0$ such that
    \[\Pi'_J(P_J(h)+A)\geq \big(\frac{1}{2}\big)^{c2^{Jd}}e^{-\frac{1}{p}\norm{h}^p_{B_{pp}^{\alpha}}}\Pi'_J (A).\] 
\end{lemma}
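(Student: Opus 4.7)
I would mimic the structure of the proof of Lemma~\ref{lem:decenter}, adapting it to handle general $p\geq 1$ via the finite-dimensionality of $\Pi'_J$. In the wavelet coefficient representation, $\Pi'_J$ has a log-concave Lebesgue density
\[
\rho(\theta)\;=\;\frac{1}{Z_J}\exp\Big(-\tfrac{1}{p}\|\theta\|_{B^\alpha_{pp}}^p\Big)
\]
supported on the finite-dimensional subspace $V_J=\mathrm{span}\{\psi_{lr}: l\leq J,\ r\in R_l\}$, of dimension $d_J\simeq 2^{Jd}$. The first step is to reduce to $h\in V_J$: decomposing $h=P_J h + P_{>J}h$, one checks that for $\theta\in V_J$ the constraint $\theta-h\in A$ depends on $P_{>J}h$ only through a (convex, not necessarily symmetric) perturbation of $A\cap V_J$, while the active shift is $P_J h$, and $\|P_J h\|_{B^\alpha_{pp}}\le\|h\|_{B^\alpha_{pp}}$.

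Next, exploiting the symmetry of both $A$ and $\rho$ under $\theta\mapsto -\theta$, a change of variable gives
\[
\Pi'_J(h+A)\;=\;\int_{A\cap V_J}\frac{\rho(\theta+h)+\rho(\theta-h)}{2}\,d\theta
\;\geq\;\int_{A\cap V_J}\sqrt{\rho(\theta+h)\rho(\theta-h)}\,d\theta,
\]
by AM--GM. The integrand equals $\rho(\theta)\exp\bigl(-(\|\theta+h\|^p+\|\theta-h\|^p-2\|\theta\|^p)/(2p)\bigr)$. For $p\in[1,2]$, the parallelogram-type inequality of Proposition~2.11 of \cite{agapiou2021rates} used in the proof of Lemma~\ref{lem:decenter} yields $\|\theta+h\|^p+\|\theta-h\|^p\leq 2\|\theta\|^p+2\|h\|^p$, so the integrand is at least $\rho(\theta)\,e^{-\|h\|^p/p}$; this already delivers the claim (with the $(1/2)^{c\,2^{Jd}}$ factor freely chosen to be~$1$).

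For $p>2$ this parallelogram inequality reverses and the pointwise bound fails. Instead, I would use the convexity estimate $(\|\theta\|+\|h\|)^p\leq 2^{p-1}(\|\theta\|^p+\|h\|^p)$ to obtain
\[
\sqrt{\rho(\theta+h)\rho(\theta-h)}\;\geq\;\rho(\theta)\,\exp\Big(-\tfrac{(2^{p-1}-1)\|\theta\|^p + 2^{p-1}\|h\|^p}{p}\Big).
\]
The offending $\|\theta\|^p$-dependence is absorbed in the subsequent integration via the change of variables $\theta\mapsto c\theta$ with $c=2^{(p-1)/p}\geq 1$: the Lebesgue Jacobian produces a factor $c^{-d_J}=(1/2)^{((p-1)/p)\,d_J}$, while the symmetric convexity of $A$ (hence $cA\supseteq A$) gives $\Pi'_J(cA)\geq\Pi'_J(A)$. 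Combined, this yields
\[
\int_{A\cap V_J}\rho(\theta)\,e^{-(2^{p-1}-1)\|\theta\|^p/p}\,d\theta\;\geq\;(1/2)^{c_1\,2^{Jd}}\,\Pi'_J(A)
\]
for an appropriate $c_1>0$. Multiplying by $e^{-2^{p-1}\|h\|^p/p}$ and folding the spare factor $e^{-(2^{p-1}-1)\|h\|^p/p}$ into the dimensional term (enlarging $c$ if necessary) brings the coefficient on $\|h\|^p$ down to the stated $1/p$, producing the desired bound.

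The main obstacle is precisely the breakdown of the parallelogram-type inequality for $p>2$, which makes the symmetry-plus-AM--GM argument of Lemma~\ref{lem:decenter} ineffective pointwise. The remedy is to tolerate a dimension-dependent loss $(1/2)^{c\,2^{Jd}}$, traded for pointwise sharpness through the rescaling argument above; this explains why the lemma must be restricted to the truncated prior (the argument breaks in infinite dimensions, where the Jacobian factor degenerates). A minor technical nuisance is the careful bookkeeping of the perpendicular component $P_{>J}h$ in Step~1, which deforms $A\cap V_J$ into a convex (not symmetric) set; this is handled by a further symmetrization within $V_J$, or absorbed by the Jacobian reserve built into the constant~$c$.
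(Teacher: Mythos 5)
Your reduction to a finite-dimensional Lebesgue-density computation, the global symmetrization in $h$, and the AM--GM step are all fine, and for $p\in[1,2]$ the parallelogram-type inequality recovers the claim (indeed without any dimensional loss, as you observe). The problem is the $p>2$ branch. There, your pointwise bound gives
\[
\sqrt{\rho(\theta+h)\rho(\theta-h)}\ \geq\ \rho(\theta)\,\exp\!\Big(-\tfrac{(2^{p-1}-1)\|\theta\|^p+2^{p-1}\|h\|^p}{p}\Big),
\]
and the rescaling $\theta\mapsto 2^{(p-1)/p}\theta$ does neatly absorb the $\|\theta\|^p$-term at the cost of a Jacobian factor $\leq(1/2)^{d_J}$. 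But the $h$-dependence that survives is $e^{-2^{p-1}\|h\|^p/p}$, not the claimed $e^{-\|h\|^p/p}$. You propose to ``fold the spare factor $e^{-(2^{p-1}-1)\|h\|^p/p}$ into the dimensional term'' by enlarging $c$; this cannot work, because $(1/2)^{c\,2^{Jd}}$ is a fixed number independent of $h$, while $e^{-(2^{p-1}-1)\|h\|^p/p}\to 0$ as $\|h\|_{B^\alpha_{pp}}\to\infty$. No choice of $c$ makes the inequality hold uniformly in $h$, and no alternative splitting $\|\theta\pm h\|^p\le (1-\epsilon)^{1-p}\|\theta\|^p+\epsilon^{1-p}\|h\|^p$ avoids the dilemma: to drive the coefficient on $\|h\|^p$ down to $1/p$ one must take $\epsilon\to 1$, which makes the $\|\theta\|^p$-coefficient blow up. So your argument proves a strictly weaker statement for $p>2$.

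The paper's proof sidesteps this by symmetrizing \emph{coordinate by coordinate} rather than globally. Writing $V(x)=|x|^p/p$, the single-coordinate identity
\[
e^{V(x)+V(y)-V(x-y)}+e^{V(x)+V(y)-V(x+y)}\ \geq\ 1
\]
holds for every $p\geq 1$ (since for $x,y$ of the same sign $|x-y|^p\leq |x|^p+|y|^p$, so the first exponent is already $\geq 0$). Applying the $\theta_{lr}\mapsto -\theta_{lr}$ symmetrization to one coordinate at a time, this inequality strips that coordinate's contribution at the price of exactly one factor $1/2$, leaving the decentering prefactor $e^{-\|h\|^p_{B^\alpha_{pp}}/p}$ untouched. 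Iterating over the $d_J\simeq 2^{Jd}$ coordinates yields the stated constant $(1/2)^{c\,2^{Jd}}$ with the sharp coefficient $1/p$ on $\|h\|^p$. That is precisely the step your global symmetrization cannot reproduce once the parallelogram inequality reverses.

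A secondary point: your Step~1 reduction to $h\in V_J$ is not needed in the way you set it up, and the proposed fixes (symmetrize $A\cap V_J$ again, or ``absorb into the Jacobian reserve'') are not substantiated. In the paper's application only $P_J\theta_0$ is used as the shift (see the proof of Theorem~\ref{mainthmsv}, where Lemma~\ref{0con} controls the discarded high frequencies separately), so $h\in V_J$ can simply be assumed; alternatively, note that for $h\notin V_J$ one has $\Pi'_J(h+A)=\Pi'_J(P_Jh+A')$ for the (possibly non-symmetric) slice $A'=\{P_Ja:a\in A,\ P_{>J}a=-P_{>J}h\}$ and a separate argument would be required.
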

\begin{proof}
    By Proposition 2.7 in \cite{agapiou2021rates} and definition of $\Pi'_J$, letting $V(x)=\frac{\abs{x}^p}{p}$, $\gamma_{lr} = 2^{-l(\alpha +\frac{d}{2}-\frac{d}{p})}$ and $f_{lr} = \pdt{f}{\psi_{lr}}_{L^2(\mathbb{R}^d)}$ for $f\in L^2(\mathbb{R}^d)$, we deduce that
    \begin{align*}
        &\Pi'_J(A+P_J(h)) = \int_A\mexp{\sum_{l=-1}^{J}\sum_{r\in R_l}\Big(V\big(\frac{\theta_{lr}}{\gamma_{lr}}\big)-V\big(\frac{\theta_{lr}-h_{lr}}{\gamma_{lr}}\big)\Big)}\Pi'_J(d\theta)\\
        % = &\mexp{-\sum_{l=-1}^{J}\sum_{r\in R_l}V\big(\frac{h_{lr}}{\gamma_{lr}}\big)}\int_A\mexp{\sum_{l=-1}^{J}\sum_{r\in R_l}\Big(V\big(\frac{\theta_{lr}}{\gamma_{lr}}\big)+V\big(\frac{h_{lr}}{\gamma_{lr}}\big)-V\big(\frac{\theta_{lr}-h_{lr}}{\gamma_{lr}}\big)\Big)}\Pi'_J(d\theta)\\
        \geq &e^{-\frac{1}{p}\norm{h}^p_{B_{pp}^{\alpha}}}\int_A\mexp{\sum_{l=-1}^{J}\sum_{r\in R_l}\Big(V\big(\frac{\theta_{lr}}{\gamma_{lr}}\big)+V\big(\frac{h_{lr}}{\gamma_{lr}}\big)-V\big(\frac{\theta_{lr}-h_{lr}}{\gamma_{lr}}\big)\Big)}\Pi'_J(d\theta)\\
        =&e^{-\frac{1}{p}\norm{h}^p_{B_{pp}^{\alpha}}}\int_A\exp\Big\{\sum_{l=-1}^{J-1}\sum_{r\in R_l}\Big(V\big(\frac{\theta_{lr}}{\gamma_{lr}}\big)+V\big(\frac{h_{lr}}{\gamma_{lr}}\big)-V\big(\frac{\theta_{lr}-h_{lr}}{\gamma_{lr}}\big)\Big)\\
        &\qquad\qquad\qquad+\sum_{r\in R_J/\{\tilde{r}\}}\Big(V\big(\frac{\theta_{Jr}}{\gamma_{Jr}}\big)+V\big(\frac{h_{Jr}}{\gamma_{Jr}}\big)-V\big(\frac{\theta_{Jr}-h_{Jr}}{\gamma_{Jr}}\big)\Big)\Big\}\\&\qquad\cdot\frac{1}{2}\Big(e^{V\big(\frac{\theta_{J\tilde{r}}}{\gamma_{J\tilde{r}}}\big)+V\big(\frac{h_{J\tilde{r}}}{\gamma_{J\tilde{r}}}\big)-V\big(\frac{\theta_{J\tilde{r}}-h_{J\tilde{r}}}{\gamma_{J\tilde{r}}}\big)}+e^{V\big(\frac{\theta_{J\tilde{r}}}{\gamma_{J\tilde{r}}}\big)+V\big(\frac{h_{J\tilde{r}}}{\gamma_{J\tilde{r}}}\big)-V\big(\frac{\theta_{J\tilde{r}}+h_{J\tilde{r}}}{\gamma_{J\tilde{r}}}\big)}\Big)\Pi'_J(d\theta),
    \end{align*}
where in the last line we used the symmetry of $V,A,\Pi'_J$.
Provided that, for $x,y\in R$,
\begin{equation}\label{lem:decenterJ:1}
    e^{V(x)+V(y)-V(x-y)}+e^{V(x)+V(y)-V(x+y)}\geq 1,
\end{equation}
we further deduce that
\begin{align*}
        &\Pi'_J(A+P_J(h)) \\\geq& e^{-\frac{1}{p}\norm{h}^p_{B_{pp}^{\alpha}}}\int_A\frac{1}{2}\exp\Big\{\sum_{l=-1}^{J-1}\sum_{r\in R_J}\Big(V\big(\frac{\theta_{lr}}{\gamma_{lr}}\big)+V\big(\frac{h_{lr}}{\gamma_{lr}}\big)-V\big(\frac{\theta_{lr}-h_{lr}}{\gamma_{lr}}\big)\Big)\\
        &\qquad\qquad\qquad+\sum_{r\in R_l/\{\tilde{r}\}}\Big(V\big(\frac{\theta_{Jr}}{\gamma_{Jr}}\big)+V\big(\frac{h_{Jr}}{\gamma_{Jr}}\big)-V\big(\frac{\theta_{Jr}-h_{Jr}}{\gamma_{Jr}}\big)\Big)\Big\}\Pi'_J(d\theta)\\
        \geq &e^{-\frac{1}{p}\norm{h}^p_{B_{pp}^{\alpha}}}\int_A\big(\frac{1}{2}\big)^{\abs{R_J}}\mexp{\sum_{l=-1}^{J-1}\sum_{r\in R_l}\Big(V\big(\frac{\theta_{lr}}{\gamma_{lr}}\big)+V\big(\frac{h_{lr}}{\gamma_{lr}}\big)-V\big(\frac{\theta_{lr}-h_{lr}}{\gamma_{lr}}\big)\Big)}\Pi'_J(d\theta)\\
        \geq &\big(\frac{1}{2}\big)^{\sum_{l=-1}^J\abs{R_l}}e^{-\frac{1}{p}\norm{h}^p_{B_{pp}^{\alpha}}}\Pi'_J(A)
        \geq \big(\frac{1}{2}\big)^{c2^{Jd}}e^{-\frac{1}{p}\norm{h}^p_{B_{pp}^{\alpha}}}\Pi'_J (A).
    \end{align*}
We complete our proof by proving \eqref{lem:decenterJ:1}.
Because $$e^{V(x)+V(y)-V(x-y)}+e^{V(x)+V(y)-V(x+y)}$$ is symmetric, it is sufficient to consider $x,y\geq 0$.
Notice that, when $x,y\geq 0$, 
$\abs{x-y}^p\leq x^p+y^p,$
which implies
$e^{V(x)+V(y)-V(x-y)} \geq 1.$
Thus, we have
\[e^{V(x)+V(y)-V(x-y)}+e^{V(x)+V(y)-V(x+y)}\geq 1.\]
\end{proof}
The next lemma is deduced directly from the proof of Lemma 6.5 in \cite{agapiou2024laplace} and the proof of Proposition 2.15 of \cite{agapiou2021rates}.
\begin{lemma}\label{lem:twolevel}
    Consider $\Pi'$ defined in Definition \ref{Def:BesovPrior}. Then there exists a constant $\Lambda>0$ such that for any $r>0$ 
    \begin{align*}
            &\Pi'\bigg(\theta = \theta_1 + \theta_2 +\theta_3: \theta_1\in A, \norm{\theta_2}_{B_{pp}^{\alpha}({\mathbb{R}^d})}\leq r^\frac{1}{p},\norm{\theta_3}_{H^{\alpha+\frac{d}{2}-\frac{d}{p}}({\mathbb{R}^d})} \leq \sqrt{r}, \\
            & \qquad \qquad \qquad \qquad \qquad \qquad  \theta_i\in \mathrm{span}\{\psi_{lr}\},i=1,2,3\bigg) \geq 1-\frac{1}{\Pi'(A)}\exp(-r/\Lambda).
    \end{align*}
\end{lemma}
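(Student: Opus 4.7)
The plan is to follow the two-level decomposition of \cite[Proposition 2.15]{agapiou2021rates} and \cite[Lemma 6.5]{agapiou2024laplace}, adapted to the present $p$-exponential Besov prior $\Pi'$. Denoting by $U_2$ and $U_3$ the symmetric convex balls of radius $r^{1/p}$ in $B^{\alpha}_{pp}(\mathbb{R}^d)$ and of radius $\sqrt{r}$ in $H^{\alpha+d/2-d/p}(\mathbb{R}^d)$ respectively, both intersected with the wavelet span $\mathrm{span}\{\psi_{lr}\}$, the event in the lemma is precisely $\{\theta\in A+U_2+U_3\}$. Because each coordinate density of $\Pi'$ is symmetric and log-concave for $p\geq 1$, $\Pi'$ is a symmetric log-concave product on the sequence space of wavelet coefficients, so the Anderson-type shift inequality gives
\[\Pi'\bigl((A+U)^c\bigr)\leq\frac{1}{\Pi'(A)}\,\Pi'(U^c)\]
for any symmetric convex Borel $U$. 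Taking $U=U_2+U_3$ reduces the problem to the unconditional concentration bound $\Pi'((U_2+U_3)^c)\leq \exp(-r/\Lambda)$.

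To establish this I would construct a coordinatewise coupling splitting $\theta\sim \Pi'$ via a two-level threshold. Choose a cutoff level $L$ with $2^{Ld}\simeq r$ and a level-dependent magnitude threshold $T_l$ of order $(l\log 2)^{1/p}$ for $l>L$, and define
\[\theta^{(3)}=\sum_{l\leq L,\,r\in R_l}2^{l(d/p-d/2-\alpha)}\xi_{lr}\psi_{lr},\qquad \theta^{(2)}=\sum_{l>L,\,r\in R_l}2^{l(d/p-d/2-\alpha)}\xi_{lr}\mathbf{1}_{|\xi_{lr}|>T_l}\psi_{lr}.\]
The wavelet rescalings are calibrated so that for any sample of $\Pi'$ one has the identities $\|\theta^{(3)}\|^2_{H^{\alpha+d/2-d/p}}=\sum_{l\leq L,r}\xi_{lr}^2$ and $\|\theta^{(2)}\|^p_{B^{\alpha}_{pp}}=\sum_{l>L,\,|\xi_{lr}|>T_l}|\xi_{lr}|^p$, so each quantity is a sum of independent sub-exponential random variables. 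A Bernstein-type bound for the first (whose mean is of order $2^{Ld}\simeq r$) yields $\Pi'(\|\theta^{(3)}\|_{H^{\alpha+d/2-d/p}}>\sqrt{r})\leq \exp(-r/\Lambda_1)$, while the $p$-exponential tail $\mathbb{P}(|\xi_{lr}|>T_l)\simeq \exp(-T_l^p/p)=2^{-cl}$, combined with the choice of $T_l$, makes the mean of the second sum $\lesssim r$, whence $\Pi'(\|\theta^{(2)}\|_{B^{\alpha}_{pp}}>r^{1/p})\leq \exp(-r/\Lambda_2)$. Setting $\theta^{(1)}=\theta-\theta^{(2)}-\theta^{(3)}$ and combining the three pieces through the shift inequality delivers the claim with $\Lambda=\Lambda_1\vee\Lambda_2$ (up to an absorbed constant).

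The main obstacle is the simultaneous calibration of $L$ and the thresholds $T_l$ so that both rates in the union bound genuinely decay like $\exp(-r/\Lambda)$: the combinatorial weight $|R_l|\simeq 2^{ld}$ at high levels forces $T_l$ to grow, yet $T_l$ cannot grow so fast that fluctuations of $\|\theta^{(2)}\|^p_{B^{\alpha}_{pp}}$ overwhelm $r$. This balance is precisely the delicate computation carried out in \cite[Prop.~2.15]{agapiou2021rates}, and its transfer here is largely mechanical because the rescalings reduce the two relevant norms to weight-free sums of independent $p$-exponential functionals. A secondary technical point is the verification of the Anderson shift inequality for $\Pi'$, which follows from the log-concavity of each one-dimensional factor $Exp(p;0,1)$ together with a standard tensorisation argument.
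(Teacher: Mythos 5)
Your strategy has a structural flaw that cannot be repaired along the lines you propose. The reduction ``apply the shift inequality with $U=U_2+U_3$ and then show $\Pi'((U_2+U_3)^c)\leq e^{-r/\Lambda}$'' is impossible, because $\Pi'(U_2+U_3)=0$. Indeed, writing $\xi_{lr}=2^{l(\alpha+\frac{d}{2}-\frac{d}{p})}\theta_{lr}$, membership of $\theta$ in $U_2+U_3$ amounts to a splitting $\xi_{lr}=u_{lr}+v_{lr}$ with $\sum_{l,r}\abs{u_{lr}}^p\leq r$ and $\sum_{l,r}v_{lr}^2\leq r$; since $\abs{u_{lr}}^p+v_{lr}^2\gtrsim \min(\abs{\xi_{lr}}^p,\xi_{lr}^2)$ coordinatewise and $\sum_{l,r}\min(\abs{\xi_{lr}}^p,\xi_{lr}^2)=+\infty$ almost surely (infinitely many i.i.d.\ summands with positive mean), no such splitting exists for $\Pi'$-a.e.\ $\theta$. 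This is not a technicality: it is the reason the third set $A$ must appear inside the Minkowski sum and cannot be peeled off by a correlation/shift inequality. Relatedly, your explicit splitting leaves the remainder $\theta^{(1)}$ (the sub-threshold high-frequency coefficients), which has a.s.\ infinite $B^{\alpha}_{pp}$ and $H^{\alpha+d/2-d/p}$ norms, and you give no argument — nor can a fixed, deterministic coordinate splitting give one — for why $\theta^{(1)}$ should land in an arbitrary set $A$ of measure $\Pi'(A)$.

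The proofs the paper relies on (Proposition 2.15 of \cite{agapiou2021rates}, Lemma 6.5 of \cite{agapiou2024laplace}) run through Talagrand's two-level concentration inequality for product $p$-exponential measures: one proves, by tensorisation of an infimum-convolution (property $(\tau)$) inequality, the integrated bound
\begin{equation*}
\int \mexp{\tfrac{1}{\Lambda}\,c\text{-dist}(\theta,A)}\,d\Pi'(\theta)\;\leq\;\frac{1}{\Pi'(A)},
\end{equation*}
where the cost $c$ is quadratic near the origin and $p$-th power in the tails, so that the sublevel set $\{c\text{-dist}(\cdot,A)\leq r\}$ contains $A+\sqrt{r}B_{H^{\alpha+d/2-d/p}}+r^{1/p}B_{B^{\alpha}_{pp}}$; Markov's inequality then yields the claim. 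The factor $1/\Pi'(A)$ and the rate $e^{-r/\Lambda}$ emerge jointly from this single integrated bound — the decomposition of $\theta$ into the three pieces is chosen adaptively by the infimum in the cost, not fixed in advance. Your norm identities and the Bernstein/tail calibrations for the low-frequency and over-threshold parts are sound and do mirror ingredients of the real argument (they explain why the two balls have the radii $\sqrt{r}$ and $r^{1/p}$), but they cannot substitute for the inf-convolution step, which is where the set $A$ and its measure actually enter.
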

We note that Lemmas \ref{lem:concentration}, \ref{lem:decenter}, \ref{lem:smallball}, and \ref{lem:twolevel} can also be proved for $\Pi_J'$ in place of $\Pi'$ with those constants independent of $J$. 

\section*{Acknowledgments}
This research was partially funded by the National Natural Science Foundation of China (Grant Nos. 12322116, 12271428, 12326606, and 42474139), the Fundamental and Interdisciplinary Disciplines Breakthrough Plan of the Ministry of Education of China (Grant No. JYB2025XDXM101), the National Key Research and Development Program of China (Grant No. 2022YFA1004100), and the Major Projects of the National Natural Science Foundation of China (Grant Nos. 12090021 and 12090020).

\bibliographystyle{plain}
\bibliography{reference}

\end{document}